\documentclass[final]{siamart1116}

\usepackage{amsfonts}
\usepackage{graphicx,epstopdf}
\usepackage{mathrsfs}
\usepackage{bm}
\usepackage{multirow}
\usepackage{color}
\usepackage{hyperref} 
\hypersetup{colorlinks=false} %

\newsiamthm{claim}{Claim}
\newsiamremark{remark}{Remark}
\newsiamremark{expl}{Example}
\numberwithin{theorem}{section}

\allowdisplaybreaks

\title{
  Provably Positive 
  Discontinuous Galerkin Methods     
  for Multidimensional Ideal  
  Magnetohydrodynamics\thanks{Submitted to the editors on January 31, 2018.}
}

\author{
  Kailiang Wu\thanks{Department of Mathematics, The Ohio State University, 
  	Columbus, OH 43210, USA  (\email{wu.3423@osu.edu}).}
  \and
  Chi-Wang Shu\thanks{Division of Applied Mathematics, Brown University, Providence, RI 02912, USA (\email{shu@dam.brown.edu}). Research is supported in part by ARO grant W911NF-15-1-0226 and NSF grant DMS-1719410. }  
}

\headers{Provably Positive DG methods for MHD}
{Kailiang Wu and Chi-Wang Shu}

\ifpdf
\hypersetup{ pdftitle={Provably Positive DG methods for MHD} }
\fi

\begin{document}
\maketitle


\begin{abstract}
The density and pressure are positive physical quantities in 	
magnetohydrodynamics (MHD). 
Design of {\em provably} positivity-preserving (PP) numerical schemes 
for ideal compressible MHD is highly desirable, but remains 
a challenge especially in the multidimensional cases. 	
In this paper, we first develop 
uniformly 
high-order discontinuous Galerkin (DG) schemes which provably 
preserve the positivity of density and pressure for multidimensional ideal MHD. 
The schemes are constructed by using the locally divergence-free DG schemes 
for the symmetrizable ideal MHD equations as the base schemes, 
a PP limiter to enforce the positivity of the
DG solutions, and the 
strong stability preserving methods for 
time discretization. 
The significant innovation is that we discover and 
 rigorously prove the 
PP property of the proposed DG schemes by using a novel equivalent form of the admissible state set and very technical estimates. 
Several two-dimensional numerical examples further confirm 
the PP property, and demonstrate the accuracy, effectiveness and robustness 
of the proposed PP methods.
\end{abstract}

\begin{keywords}
	positivity-preserving, discontinuous Galerkin method,  magnetohydrodynamics, high-order accuracy, 
	locally divergence-free, hyperbolic conservation laws
\end{keywords}

\begin{AMS}
  35L65, 65M60, 65M08, 76W05 
\end{AMS}

\section{Introduction} 
\label{sec:intro}
In this paper, we would like to 
develop the high-order numerical methods which {\em provably} preserve the positivity of 
density, pressure and internal energy 
for the ideal magnetohydrodynamics (MHD).  
In the laboratory frame, 
the equations governing 
the $d$-dimensional ideal compressible MHD flows can be
written as a set of nonlinear hyperbolic conservation laws
\begin{equation}\label{eq:MHD}
\frac{{\partial {\bf U}}}{{\partial t}} + \sum\limits_{i = 1}^d {\frac{{\partial { {\bf F}_i}({\bf U})}}{{\partial x_i}}}  = {\bf 0},
\end{equation}
where $d=1, 2$ or $3$. In Eq. \eqref{eq:MHD}, the conservative vector ${\bf U} = ( \rho,\rho {\bf v},{\bf B},E )^{\top}$,
and ${\bf F}_i({\bf U})$ is the flux in the $x_i$-direction, $i=1,\cdots,d$, defined by
\begin{align*}
	{\bf F}_i({\bf U}) = \Big( \rho v_i,~\rho v_i {\bf v}  -  B_i {\bf B}   + p_{tot}  {\bf e}_i,~v_i {\bf B} - B_i {\bf v},~v_i(E+p_{tot} ) -  B_i({\bf v}\cdot {\bf B})
	\Big)^{\top}.
\end{align*}
Here $\rho$ denotes the density,  ${\bf v}=(v_1,v_2,v_3)$ is 
the fluid velocity, ${\bf B}=(B_1,B_2,B_3)$ is the magnetic field,
$p_{\rm tot}$ denotes the total pressure consisting of 
the gas pressure $p$ and  magnetic pressure $p_m=\frac{|{\bf B}|^2}2$, 
the vector ${\bf e}_i$ is the $i$-th row of the unit matrix of size 3,
$E=\rho e + \frac12 \left( \rho |{\bf v}|^2 + |{\bf B}|^2 \right)$ denotes the total energy 
consisting of kinetic, thermal and magnetic energies, and $e$ is the specific internal energy.
An additional equation for the thermodynamical quantities---the so-called equation of state (EOS)---is required to
close the system \eqref{eq:MHD}. For ideal gases the EOS is given by 
\begin{equation}\label{eq:EOS}
p = (\gamma-1) \rho e,
\end{equation}
where $\gamma>1$ is the adiabatic index. Although the EOS  \eqref{eq:EOS} is widely used, 
there are scenarios where it is more suitable to use other EOSs. A general EOS may be
expressed as
\begin{equation}\label{eq:gEOS}
p = p(\rho,e).
\end{equation}
We assume \eqref{eq:gEOS} satisfy 
\begin{equation}\label{eq:assumpEOS}
\mbox{if}\quad \rho \ge 0,\quad \mbox{then}\quad e>0~\Leftrightarrow~p(\rho,e) > 0.
\end{equation}
Such a condition is reasonable and holds for the ideal EOS \eqref{eq:EOS}. It was also used 
in \cite{Zhang2011} to develop positive high-order schemes for the Euler equations with a general EOS.

The exact solution of the $d$-dimensional MHD equations \eqref{eq:MHD}  must also satisfy the following divergence-free condition on the magnetic field
\begin{equation}\label{eq:2D:BxBy0}
\nabla \cdot {\bf B} := \sum\limits_{i = 1}^d \frac{\partial B_i } {\partial x_i} =0,
\end{equation}
if the initial magnetic field is divergence-free. 
Most of numerical methods for
the multidimensional MHD equations, however, lead to a nonzero divergence of numerical magnetic field due to truncation errors, even if the initial condition satisfies \eqref{eq:2D:BxBy0}. 
Existing evidences indicate that negligence in dealing with the divergence-free condition \eqref{eq:2D:BxBy0} can cause  nonphysical features or numerical instabilities in computed solutions; see, for example,  \cite{Brackbill1980,Evans1988,BalsaraSpicer1999,Toth2000,Dedner2002,Li2005}.
Up to now, a number of numerical techniques have been developed to enforce the
divergence-free condition or reduce the divergence-error in discrete sense.  
They include but are not limited to: 
the hyperbolic divergence cleaning methods \cite{Dedner2002},
the projection method \cite{Brackbill1980}, 
the locally divergence-free methods (cf.~\cite{Li2005,Yakovlev2013}), 
the constrained transport method \cite{Evans1988} and its  variants 
(e.g., \cite{Ryu1998,BalsaraSpicer1999,Londrillo2000,Balsara2004,Torrilhon2005,Rossmanith2006,Li2011,Li2012,Christlieb2014}),  
and the eight-wave methods (e.g., \cite{Powell1994,Powell1995,Chandrashekar2016,LiuShuZhang2017}). 
The eight-wave method 
was first proposed by Powell \cite{Powell1994,Powell1995}, 
 based on a
proper discretization of the Godunov form \cite{Godunov1972} of ideal MHD equations 
\begin{equation}\label{eq:MHD:GP}
\frac{{\partial {\bf U}}}{{\partial t}} + \sum\limits_{i = 1}^d {\frac{{\partial { {\bf F}_i}(
			{\bf U})}}{{\partial x_i}}} 
= -\big(\nabla \cdot {\bf B} \big)~{\bf S} ( {\bf U} ),
\end{equation}
where ${\bf S} = \big( 0,~{\bf B},~{\bf v},~{\bf v} \cdot {\bf B} \big)^\top.$ 
In the literature, \eqref{eq:MHD:GP} is sometimes also called Powell's system. 
The right-hand side term of \eqref{eq:MHD:GP}, 
abbreviated as ``GP source term'' in the following, is proportional 
to $\nabla \cdot {\bf B}$ and thus identically zero if $\nabla \cdot {\bf B}=0$. 
This means \eqref{eq:MHD:GP} and \eqref{eq:MHD} are equivalent 
under the condition \eqref{eq:2D:BxBy0}. However, 
for the following reasons it is sometimes advantageous to add  
the GP source term in the equations.  
First, Godunov \cite{Godunov1972} pointed out that \eqref{eq:MHD:GP} is the unique form of MHD equations which is symmetrizable. 
The symmetrized form  
is useful for designing entropy stable schemes \cite{Barth1998,Barth2006,Chandrashekar2016,LiuShuZhang2017}. 
Powell \cite{Powell1994} noticed that the system \eqref{eq:MHD} 
is incompletely hyperbolic and should add the source term to recover the missing eigenvector. 
Besides, when $\nabla \cdot {\bf B}\neq0$, 
the system \eqref{eq:MHD} is not Galilean invariant, 
while 
the GP source term 
renders the system \eqref{eq:MHD:GP} Galiean invariant (cf. \cite{Dellar2001}). 
 In most of numerical schemes the condition \eqref{eq:2D:BxBy0} 
is only satisfied up to a discretization error. 
As demonstrated by Powell \cite{Powell1995}, the 
inclusion of GP source term assures that those small divergence-errors are consistently accounted 
in a numerically stable way and do not lead to accumulation of inaccuracies.
This makes the eight-wave method stable
to control the divergence-error, although some drawbacks \cite{Toth2000} may be caused
due to the loss of conservativeness.

Besides controlling the divergence-error, another 
numerical challenge for MHD is to preserve the positivity of density and pressure. 
In physics, these two quantities are always nonnegative. Numerically their
positivity is very important, but not always satisfied by the  numerical solutions.
In fact, as soon as negative density or pressure is obtained in the MHD simulations,
the discrete problem becomes ill-posed, 
causing the breakdown of codes.
However, most existing MHD methods are generally not positivity-preserving (PP), and thus may suffer from a large risk of failure 
in solving 
MHD problems with low density, low pressure, low plasma-beta or strong discontinuity.
A few efforts were made to reduce this risk. By switching the Riemann solvers for different wave situations, Balsara and Spicer \cite{BalsaraSpicer1999a} proposed a strategy to maintain the positive pressure. 
In \cite{Janhunen2000}, Janhunen 
noticed the difficulty of developing PP schemes based on the conservative MHD system \eqref{eq:MHD}, so he 
proposed a modified MHD system, which is similar to the Godunov form \eqref{eq:MHD:GP} but includes only the source term in 
the induction equation. Based on this modified system, 
Janhunen \cite{Janhunen2000} designed an approximate 1D Riemann solver, and numerically demonstrated its PP property. 
Bouchut et al.\ \cite{Bouchut2007} derived several approximate Riemann solvers 
for 1D ideal MHD,  
with sufficient conditions for those solvers to satisfy the discrete entropy inequalities and PP property. 
Those sufficient conditions are satisfied by explicit wave speed estimates in \cite{Bouchut2010}, where the Riemann solvers were implemented and multidimensional extension was discussed 
with the aid of Janhunen's modified system.
Waagan \cite{Waagan2009} developed a positive second-order scheme 
for the ideal MHD based on the approximate Riemann solver of 
\cite{Bouchut2007,Bouchut2010} and a new linear reconstruction. 
The robustness of that scheme 
was further demonstrated in \cite{waagan2011robust}  by extensive  benchmark tests and comparisons.  
Recent years have witnessed significant progresses  in developing high-order bound-preserving methods for hyperbolic systems (see, e.g.,    \cite{zhang2010,zhang2010b,Xing2010,zhang2011b,Hu2013,Xu2014,Liang2014,Xu2017,Wu2017,ZHANG2017301}) including the ideal MHD system \cite{Balsara2012,cheng,Christlieb,Christlieb2016} and the relativistic MHD system \cite{WuTangM3AS}. 
Two PP limiting techniques were 
developed in \cite{Balsara2012,cheng} for the finite volume or discontinuous Galerkin (DG) methods for \eqref{eq:MHD} to enforce the admissibility\footnote{In this paper, the admissibility of a solution or state $\bf U$ means  
	that the density $\rho$ and pressure $p$ corresponding to $\bf U$ are both positive; see 
	Definition \ref{def:G}.} of
the reconstructed or DG solutions at certain nodal points. Those techniques are built on a presumed proposition that the cell-averaged solutions of those schemes are always admissible.
Such a proposition has not been rigorously proved for those schemes \cite{Balsara2012,cheng},
although it could be deduced for the 1D schemes in \cite{cheng} 
under some assumptions (see a discussion in \cite[Remark 2.12]{Wu2017a}). 
In fact, unfortunately, a usual way of using PP limiter does not necessarily ensure the PP property of the standard conservative DG schemes for multidimensional MHD system \eqref{eq:MHD};  
see \cite{Wu2017a} for a rigorous analysis.  
Based on the presumed PP property of the Lax--Friedrichs (LF) scheme, Christlieb et al.\ \cite{Christlieb,Christlieb2016} developed PP high-order finite difference weighted essentially non-oscillatory schemes for \eqref{eq:MHD} by extending the flux limiters in  \cite{Xu2014,Xiong2016}. 

It was numerically demonstrated that  
all the above PP techniques could improve the robustness of the MHD codes. 
However, there were few theoretical evidences,  
especially in the multidimensional cases, 
to genuinely and
completely prove the PP property of those or any other schemes for \eqref{eq:MHD}.  
Very recently, 
rigorous PP analysis was first carried out in 
\cite{Wu2017a} for 
conservative finite volume and 
DG schemes with the LF flux. 
The analysis showed in theory that 
the divergence-free condition \eqref{eq:2D:BxBy0} is strongly connected with the PP property of numerical
schemes for \eqref{eq:MHD}, and found that a discrete divergence-free (DDF) condition is necessary and crucial for designing the PP conservative schemes for \eqref{eq:MHD}. 
It was also proved in \cite{Wu2017a} that 
	even the first-order multidimensional LF scheme for \eqref{eq:MHD} 
	is not PP, if the proposed DDF condition 
	is slightly violated.  
Moreover, the DDF condition 
cannot be ensured by using a locally divergence-free basis  \cite{Li2005}. (Note that the first-order LF scheme is locally divergence-free.)  
This implies, in the multidimensional cases,   
a usual PP limiter (e.g., \cite{cheng}) does not guarantee the PP property of the standard DG schemes for the conservative MHD system \eqref{eq:MHD}, 
even if the locally divergence-free DG element \cite{Li2005} is used. 

Interestingly, on the other hand, in the PDE level the preservation of positivity and the divergence-free condition (1) are also inextricably linked for the
ideal MHD system. 
For the conservative MHD system \eqref{eq:MHD}, 
Janhunen \cite{Janhunen2000} pointed out that 
 the exact solutions to 1D Riemann
problems sometimes fail to be positive, if there is  
a jump in the normal magnetic field, i.e., a nonzero $\nabla \cdot {\bf B}$, in the initial data.
We also observe that, in the multidimensional cases, the 
non-negativity of pressure is not always preserved  
by even the exact smooth solution of the 
conservative system \eqref{eq:MHD} if the divergence-free condition \eqref{eq:2D:BxBy0} is (slightly) violated, see Appendix \ref{app:evidences} of this paper.  Fortunately, it seems that the Godunov form \eqref{eq:MHD:GP} does not suffer from 
this issue. 
Therefore, we have the strong motivation to 
construct multidimensional provably PP schemes via 
proper discretization of the modified system \eqref{eq:MHD:GP} 
 rather than the conservative system \eqref{eq:MHD}.

The aim of this paper is to design and analyze provably PP high-order DG methods for multidimensional ideal MHD 
with the aid of 
the Godunov form \eqref{eq:MHD:GP}. 
This is highly nontrivial. 
The difficulties mainly arise from the intrinsic complexity of the MHD equations as well as the underlying relation between the PP property and the divergence-free condition. 
Our analysis techniques include 
a novel equivalent form of the admissible state set and technical inequalities, introduced in Section \ref{sec:states}. 
This paper would give an insight into further understanding the importance of divergence-free condition \eqref{eq:2D:BxBy0} for preserving positivity. Other main contributions of this paper are 
follows. 
We prove a first-order LF scheme for \eqref{eq:MHD:GP} 
is PP (see Section
\ref{sec:2D:FirstOrder}), and we develop provably PP high-order DG methods 
for \eqref{eq:MHD:GP} (see Section \ref{sec:High2D}). 
Our PP DG schemes 
have three crucial ingredients\footnote{Notice that the coupling of these three techniques have also been tested in \cite{cheng} for the simulations of conservative MHD equations \eqref{eq:MHD}. It is worth clarifying that such coupling for a  conservative DG scheme does not necessarily give a genuinely PP scheme, as shown by the analysis in \cite{Wu2017a}.} ---  
the locally divergence-free spatial 
discretization for the modified MHD system \eqref{eq:MHD:GP}, 
 the PP limiter in \cite{cheng} to enforce the admissibility of the
DG solutions, and 
the strong stability preserving methods \cite{Gottlieb2009} for time discretization.  
The framework also applies to achieving provably PP high-order finite volume schemes for \eqref{eq:MHD:GP}. 
We rigorously prove the PP property of the proposed PP high-order  
schemes in Section \ref{sec:222}, and further confirm the PP property by 
  numerical experiments in Section \ref{sec:examples}, before concluding the paper in Section \ref{sec:con}.

\section{Admissible States}\label{sec:states}

Under the condition \eqref{eq:assumpEOS}, 
it is very natural and intuitive to
define the set of (physically) 
admissible states of the ideal MHD
as follows.

\begin{definition}\label{def:G}
	The set of admissible states of the ideal MHD 
	is defined by
	\begin{equation}\label{eq:DefG}
	{\mathcal G} = \left\{   {\bf U} = (\rho,{\bf m},{\bf B},E)^\top ~\Big|~ \rho > 0,~
	{\mathcal E}(  {\bf U}  ) := E- \frac12 \left( \frac{|{\bf m}|^2}{\rho} + |{\bf B}|^2 \right) > 0 \right\},
	\end{equation}
	where ${\mathcal E} ({\bf U}) =  \rho e$ denotes the internal energy.
\end{definition}

Given that the initial data are admissible, 
a scheme is defined to be PP if the numerical solutions are always preserved in the set $\mathcal G$. One can verify that 
$\mathcal G$ is a convex set \cite{cheng} since ${\mathcal E}(  {\bf U}  )$  is a concave function of 
${\bf U}$ when $\rho>0$. While  
the function ${\mathcal E} ({\bf U})$ in \eqref{eq:DefG} is nonlinear, it is difficult to analytically analyze 
 the PP property of a given scheme. The following equivalent form of ${\mathcal G}$ was proposed in \cite{Wu2017a}.

\begin{lemma}
	\label{theo:eqDefG}
	The admissible state set ${\mathcal G}$ is equivalent to
	\begin{equation}\label{eq:newDefG}
	{\mathcal G}_* = \left\{   {\bf U} = (\rho,{\bf m},{\bf B},E)^\top ~\Big|~ \rho > 0,~~~
	{\bf U} \cdot {\bf n}^* + \frac{|{\bf B}^*|^2}{2} > 0,~\forall~{\bf v}^*, {\bf B}^* \in {\mathbb {R}}^3 \right\},
	\end{equation}
	where
	$${\bf n}^* = \bigg( \frac{|{\bf v}^*|^2}2,~- {\bf v}^*,~-{\bf B}^*,~1 \bigg)^\top.$$
\end{lemma}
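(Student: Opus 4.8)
The plan is to prove the two set equalities by showing that, for a fixed $\bf U$ with $\rho>0$, the infimum over ${\bf v}^*,{\bf B}^*\in{\mathbb R}^3$ of the quantity appearing in \eqref{eq:newDefG} equals exactly ${\mathcal E}({\bf U})$, and that this infimum is attained. Since both sets impose the identical constraint $\rho>0$, the whole problem reduces to comparing the second defining condition of each set on the common domain $\{\rho>0\}$.

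First I would write out the linear pairing explicitly. With ${\bf U}=(\rho,{\bf m},{\bf B},E)^\top$ and the given ${\bf n}^*$, a direct computation gives
\begin{equation*}
{\bf U}\cdot{\bf n}^* + \frac{|{\bf B}^*|^2}{2}
= \Big( \frac{\rho}{2}|{\bf v}^*|^2 - {\bf m}\cdot{\bf v}^* \Big)
+ \Big( \frac{|{\bf B}^*|^2}{2} - {\bf B}\cdot{\bf B}^* \Big) + E .
\end{equation*}
The key observation is that the right-hand side splits into two decoupled quadratics, one in ${\bf v}^*$ and one in ${\bf B}^*$, each of which is strictly convex (the ${\bf v}^*$-quadratic because $\rho>0$, the ${\bf B}^*$-quadratic trivially). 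I would then minimize each by completing the square. For the ${\bf B}^*$-part, $\frac{1}{2}|{\bf B}^*|^2-{\bf B}\cdot{\bf B}^* = \frac{1}{2}|{\bf B}^*-{\bf B}|^2 - \frac{1}{2}|{\bf B}|^2$, so its minimum $-\frac12|{\bf B}|^2$ is attained at ${\bf B}^*={\bf B}$. For the ${\bf v}^*$-part, using $\rho>0$, $\frac{\rho}{2}|{\bf v}^*|^2-{\bf m}\cdot{\bf v}^* = \frac{\rho}{2}\big|{\bf v}^*-\frac{\bf m}{\rho}\big|^2 - \frac{|{\bf m}|^2}{2\rho}$, whose minimum $-\frac{|{\bf m}|^2}{2\rho}$ is attained at ${\bf v}^*={\bf m}/\rho$ (well-defined precisely because $\rho>0$). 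Adding the two minima to $E$ yields $E-\frac12\big(\frac{|{\bf m}|^2}{\rho}+|{\bf B}|^2\big)={\mathcal E}({\bf U})$, so the global infimum over $({\bf v}^*,{\bf B}^*)$ equals ${\mathcal E}({\bf U})$ and is achieved at $({\bf m}/\rho,{\bf B})$.

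Finally I would close the equivalence by being careful with the strict inequalities. Because the infimum is a genuine minimum, the universally quantified condition ``${\bf U}\cdot{\bf n}^*+\frac{|{\bf B}^*|^2}{2}>0$ for all ${\bf v}^*,{\bf B}^*$'' holds if and only if that minimum value is positive, i.e.\ ${\mathcal E}({\bf U})>0$: if ${\mathcal E}({\bf U})>0$ then every value is bounded below by it and hence positive, whereas if ${\mathcal E}({\bf U})\le 0$ the value at the minimizer $({\bf m}/\rho,{\bf B})$ is nonpositive and the universal condition fails. Combined with the shared constraint $\rho>0$, this yields ${\bf U}\in{\mathcal G}\iff{\bf U}\in{\mathcal G}_*$.

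The only point requiring genuine care — the sole ``obstacle'' in an otherwise routine completion-of-squares computation — is this attainment argument, since the definition of ${\mathcal G}_*$ employs an open (strict) inequality quantified over all of ${\mathbb R}^3\times{\mathbb R}^3$. Without first establishing that the infimum is actually achieved, one could not legitimately pass from ``positive for all $({\bf v}^*,{\bf B}^*)$'' to ``minimum value positive,'' and the backward implication would not be immediate. I would therefore emphasize that strict convexity (guaranteed by $\rho>0$) ensures a unique attained minimizer, which makes the two strict conditions exactly equivalent.
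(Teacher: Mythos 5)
Your proof is correct and follows essentially the same route as the paper's (the paper cites the proof to an earlier work, but the key identity it relies on appears explicitly in the proof of \eqref{eq:widelyusedIEQ}, namely ${\bf U}\cdot{\bf n}^* + \frac{|{\bf B}^*|^2}{2} = \frac{\rho}{2}|{\bf v}-{\bf v}^*|^2 + \frac{|{\bf B}-{\bf B}^*|^2}{2} + {\mathcal E}({\bf U})$, which is exactly your completed-squares decomposition with minimizer $({\bf v}^*,{\bf B}^*)=({\bf m}/\rho,{\bf B})$). Your attention to the attainment of the minimum, which justifies passing between the universally quantified strict inequality and ${\mathcal E}({\bf U})>0$, is precisely the right point of care.
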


{\em The two constraints in the set ${\mathcal G}_*$ 
	are both linear with respect to $\bf U$, making it more effective to analytically
	verify the PP property of schemes for the ideal MHD.}  
This novel equivalent form will play an important role in our PP analysis.

In addition, we also establish the following lemmas for the PP analysis.

\begin{lemma}\label{lem:rho}
	The set 
	$$
	{\mathcal G}_\rho = \big \{ {\bf U} = (\rho,{\bf m},{\bf B},E)^\top ~\big|~ \rho >0 \big \},
	$$
	is a convex set. And for any ${\bf U} \in {\mathcal G}_\rho$ and $\alpha > |v_i|$, 
	we have 
	${\bf U}\pm \alpha^{-1} {\bf F}_i ({\bf U}) \in {\mathcal G}_\rho $. 
\end{lemma}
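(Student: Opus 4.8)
The plan is to handle the two claims separately, both of which reduce to elementary observations about the density component alone.

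\textbf{Convexity.} First I would note that ${\mathcal G}_\rho$ is simply the open half-space carved out by the single linear constraint $\rho>0$, where $\rho$ is the first component of $\bf U$ and hence a linear functional of $\bf U$. A half-space defined by a strict linear inequality is convex, so ${\mathcal G}_\rho$ is convex and no further work is needed here.

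\textbf{The flux estimate.} For the second part, the key point is that membership in ${\mathcal G}_\rho$ depends only on the density, so I only need to track the first (density) component of ${\bf U}\pm\alpha^{-1}{\bf F}_i({\bf U})$. Reading off the definition of ${\bf F}_i({\bf U})$, its density component is $\rho v_i$. Hence the density of ${\bf U}\pm\alpha^{-1}{\bf F}_i({\bf U})$ equals $\rho\pm\alpha^{-1}\rho v_i=\rho\,(1\pm v_i/\alpha)$. Using the hypothesis $\alpha>|v_i|$ gives $|v_i/\alpha|<1$, so both factors $1+v_i/\alpha$ and $1-v_i/\alpha$ are strictly positive; combined with $\rho>0$ this yields $\rho\,(1\pm v_i/\alpha)>0$, i.e.\ ${\bf U}\pm\alpha^{-1}{\bf F}_i({\bf U})\in{\mathcal G}_\rho$.

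There is essentially no obstacle in this lemma: the only thing to be careful about is correctly identifying the density flux as $\rho v_i$ and factoring out $\rho$, so that the required sign condition collapses to the single scalar inequality $|v_i|<\alpha$. This result is the natural analogue for ${\mathcal G}_\rho$ of the density part of the usual CFL-type positivity argument, and I expect it to serve later as the density-preservation building block, to be combined with the sharper energy and pressure constraints encoded in the linear form ${\mathcal G}_*$ of Lemma \ref{theo:eqDefG}.
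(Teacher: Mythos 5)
Your proof is correct and matches the paper's intent: the paper dismisses this lemma with ``the result can be easily verified,'' and your argument---convexity from the linear constraint $\rho>0$, plus factoring the density component of ${\bf U}\pm\alpha^{-1}{\bf F}_i({\bf U})$ as $\rho(1\pm v_i/\alpha)>0$---is exactly that easy verification, spelled out.
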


\begin{proof}
	The result can be easily verified.
\end{proof}

\begin{lemma}
	For any ${\bf U} \in {\mathcal G}$ and ${\bf v}^*,{\bf B}^* \in {\mathbb R}^3$, it holds 
	\begin{align} \label{eq:indentityS}
		&{\bf S}({\bf U}) \cdot {\bf n}^* = ( {\bf v} - {\bf v}^* ) \cdot ( {\bf B} - {\bf B}^* ) - {\bf v}^* \cdot  {\bf B}^*, 
		\\ \label{eq:widelyusedIEQ}
		& |\sqrt{\rho}( {\bf v} - {\bf v}^* ) \cdot ( {\bf B} - {\bf B}^* )| 
		< {\bf U} \cdot {\bf n}^* + \frac{|{\bf B}^*|^2}{2}.
	\end{align}
	Furthermore, for any $b \in {\mathbb R}$, we have 
	\begin{equation}\label{eq:widelyusedIEQ2}
	b ( {\bf S}({\bf U}) \cdot {\bf n}^* ) \le \frac{|b|}{\sqrt{\rho}} \left( {\bf U} \cdot {\bf n}^* + \frac{|{\bf B}^*|^2}{2} \right)  - b ( {\bf v}^* \cdot  {\bf B}^* ).
	\end{equation}
\end{lemma}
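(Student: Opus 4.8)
The plan is to establish the three assertions in order, since the two inequalities build on the identity. All three ultimately rest on a single completion-of-squares identity, so I would derive that first and then reuse it.

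For the identity \eqref{eq:indentityS}, I would simply expand ${\bf S}({\bf U}) \cdot {\bf n}^*$ componentwise. Using ${\bf S} = (0, {\bf B}, {\bf v}, {\bf v}\cdot{\bf B})^\top$ and the given form of ${\bf n}^*$, the dot product equals $-{\bf B}\cdot{\bf v}^* - {\bf v}\cdot{\bf B}^* + {\bf v}\cdot{\bf B}$; expanding $({\bf v}-{\bf v}^*)\cdot({\bf B}-{\bf B}^*) - {\bf v}^*\cdot{\bf B}^*$ produces exactly the same three terms, since the two $\pm\,{\bf v}^*\cdot{\bf B}^*$ contributions cancel. This is a routine verification requiring no estimates.

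The crux of \eqref{eq:widelyusedIEQ} is the algebraic rewriting of the linear functional in ${\mathcal G}_*$. Substituting ${\bf m}=\rho{\bf v}$ and $E = {\mathcal E}({\bf U}) + \frac{1}{2}(\rho|{\bf v}|^2 + |{\bf B}|^2)$ into ${\bf U}\cdot{\bf n}^*$ and collecting the velocity and magnetic terms separately, I expect the completion-of-squares identity
\[
{\bf U}\cdot{\bf n}^* + \frac{|{\bf B}^*|^2}{2} = \frac{\rho}{2}|{\bf v}-{\bf v}^*|^2 + \frac{1}{2}|{\bf B}-{\bf B}^*|^2 + {\mathcal E}({\bf U}).
\]
This is the bridge between the linear constraint defining ${\mathcal G}_*$ and the physical energies, and it makes the remaining argument transparent. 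Applying Young's inequality to $\sqrt{\rho}\,|{\bf v}-{\bf v}^*|$ and $|{\bf B}-{\bf B}^*|$ gives
\[
|\sqrt{\rho}({\bf v}-{\bf v}^*)\cdot({\bf B}-{\bf B}^*)| \le \frac{\rho}{2}|{\bf v}-{\bf v}^*|^2 + \frac{1}{2}|{\bf B}-{\bf B}^*|^2,
\]
and since ${\bf U}\in{\mathcal G}$ forces ${\mathcal E}({\bf U})>0$, adding this strictly positive internal energy upgrades the bound to the strict inequality \eqref{eq:widelyusedIEQ}. For \eqref{eq:widelyusedIEQ2}, I would insert \eqref{eq:indentityS} to obtain $b({\bf S}({\bf U})\cdot{\bf n}^*) = b({\bf v}-{\bf v}^*)\cdot({\bf B}-{\bf B}^*) - b({\bf v}^*\cdot{\bf B}^*)$; the common term $-b({\bf v}^*\cdot{\bf B}^*)$ cancels against the right-hand side, reducing the claim to bounding $b({\bf v}-{\bf v}^*)\cdot({\bf B}-{\bf B}^*)$, which is controlled by $|b|$ times \eqref{eq:widelyusedIEQ} divided by $\sqrt{\rho}$.

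I do not expect a genuine obstacle here: this is a preparatory lemma whose entire weight lies in the completion-of-squares identity above. The only point requiring minor care is tracking strict versus non-strict inequalities—the $\le$ in \eqref{eq:widelyusedIEQ2} is chosen to accommodate the degenerate case $b=0$, whereas \eqref{eq:widelyusedIEQ} is strict precisely because the internal energy ${\mathcal E}({\bf U})$ is strictly positive throughout ${\mathcal G}$.
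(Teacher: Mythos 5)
Your proposal is correct and follows essentially the same route as the paper's proof: the same completion-of-squares identity ${\bf U}\cdot{\bf n}^* + \frac{|{\bf B}^*|^2}{2} = \frac{\rho}{2}|{\bf v}-{\bf v}^*|^2 + \frac{1}{2}|{\bf B}-{\bf B}^*|^2 + {\mathcal E}({\bf U})$, followed by Young's inequality and the strict positivity of ${\mathcal E}({\bf U})$ on ${\mathcal G}$, with \eqref{eq:widelyusedIEQ2} obtained by combining \eqref{eq:indentityS} and \eqref{eq:widelyusedIEQ}. Your write-up is in fact more explicit than the paper's terse version, including the careful remark on where strictness is lost when $b=0$.
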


\begin{proof}
	The identity \eqref{eq:indentityS} can be directly verified. The inequality \eqref{eq:widelyusedIEQ} 
	is shown as follows.
	\begin{align*}
		{\bf U} \cdot {\bf n}^* + \frac{|{\bf B}^*|^2}{2}
		& = \frac{\rho}{2} | {\bf v} - {\bf v}^*  |^2 + \frac{|{\bf B} - {\bf B}^*|^2}{2}  + {\mathcal E}({\bf U}) 
		\\
		& > \frac{\rho}{2} | {\bf v} - {\bf v}^*  |^2 + \frac{|{\bf B} - {\bf B}^*|^2}{2}
		\\
		&\ge 	
		|\sqrt{\rho}( {\bf v} - {\bf v}^* ) \cdot ( {\bf B} - {\bf B}^* )|. 
	\end{align*}
	Combining \eqref{eq:indentityS} and \eqref{eq:widelyusedIEQ} gives \eqref{eq:widelyusedIEQ2}. 
\end{proof}

We also need the following inequality, which was technically constructed and proved in 
\cite{Wu2017a}, and has played a pivotal role 
in analyzing the PP properties of conservative schemes for the ideal MHD \cite{Wu2017a}.

\begin{lemma}\label{theo:MHD:LLFsplit}
	If ${\bf U}, \tilde{\bf U} \in {\mathcal G}$, then the inequality
	\begin{equation}\label{eq:MHD:LLFsplit}
	\bigg( {\bf U} - \frac{ {\bf F}_i({\bf U})}{\alpha}
	+
	\tilde{\bf U} + \frac{ {\bf F}_i(\tilde{\bf U})}{\alpha}
	\bigg) \cdot {\bf n}^* + |{\bf B}^*|^2
	+  \frac{  B_i - \tilde B_i }{\alpha} ({\bf v}^* \cdot {\bf B}^*)  > 0 ,
	\end{equation}
	holds for any ${\bf v}^*, {\bf B}^* \in {\mathbb{R}}^3$ and any $|\alpha|>\alpha_{i} ({\bf U},\tilde{\bf U}) $, where $i\in\{1,2,3\}$, and 
	\begin{gather}\label{eq:alpha_i}
		\alpha_{i} ({\bf U},\tilde{\bf U}) = 
		\min_{\sigma \in \mathbb{R}} \alpha_{i} ( {\bf U}, \tilde {\bf U};\sigma ),
		\\ \nonumber
		\alpha_{i} ( {\bf U}, \tilde {\bf U};\sigma ) 
		=
		\max\big\{ |v_i|+ {\mathscr{C}}_i,|\tilde v_i| +  \tilde {\mathscr{C}}_i ,| \sigma v_i + (1-\sigma) \tilde v_i | + \max\{ {\mathscr{C}}_i , \tilde {\mathscr{C}}_i \}  \big\}
		+ f ( {\bf U}, \tilde {\bf U};  \sigma ),
	\end{gather}
	with 
	\begin{align*}
		&		f( {\bf U}, \tilde {\bf U}; \sigma) = \frac{ |\tilde{\bf B}-{\bf B}| }{\sqrt{2}} \sqrt{  \frac{\sigma^2}{\rho} + \frac{ (1-\sigma)^2 }{\tilde \rho}  },
		\\	
		&	
		{\mathscr{C}}_i = \frac{1}{\sqrt{2}}  \left[ {\mathscr{C}}_s^2 + \frac{ |{\bf B}|^2}{\rho} + \sqrt{ \left( {\mathscr{C}}_s^2 + \frac{ |{\bf B}|^2}{\rho} \right)^2 - 4 \frac{ {\mathscr{C}}_s^2 B_i^2}{\rho}  } \right]^\frac12,
	\end{align*}
	and ${\mathscr{C}}_s=\frac{p}{\rho \sqrt{2e}}$.
\end{lemma}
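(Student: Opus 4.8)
The plan is to reduce everything to the linear characterization ${\mathcal G}_*$ of Lemma \ref{theo:eqDefG} and to exploit a clean projection identity for the flux. First I would compute the projection ${\bf F}_i({\bf U})\cdot{\bf n}^*$. Grouping the momentum, magnetic and energy components of ${\bf F}_i({\bf U})$ against ${\bf n}^*$ and recognizing the combination ${\bf S}({\bf U})\cdot{\bf n}^*$ appearing in \eqref{eq:indentityS}, a direct calculation yields the structural identity
\[
  {\bf F}_i({\bf U})\cdot{\bf n}^* = v_i\,\big({\bf U}\cdot{\bf n}^*\big) + p_{tot}\,(v_i - v_i^*) - B_i\,\big({\bf S}({\bf U})\cdot{\bf n}^*\big).
\]
This isolates the GP-type contribution $B_i({\bf S}\cdot{\bf n}^*)$, and since ${\bf S}\cdot{\bf n}^*$ carries the term $-{\bf v}^*\cdot{\bf B}^*$ by \eqref{eq:indentityS}, the added correction $\frac{B_i - \tilde B_i}{\alpha}({\bf v}^*\cdot{\bf B}^*)$ in the statement is exactly what is needed to cancel these pieces. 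This motivates writing the left-hand side of \eqref{eq:MHD:LLFsplit} as $\Phi + \tilde\Phi$, where $\Phi$ collects the ${\bf U}$-terms together with $\frac{|{\bf B}^*|^2}{2} + \frac{B_i}{\alpha}({\bf v}^*\cdot{\bf B}^*)$ and $\tilde\Phi$ collects the $\tilde{\bf U}$-terms together with $\frac{|{\bf B}^*|^2}{2} - \frac{\tilde B_i}{\alpha}({\bf v}^*\cdot{\bf B}^*)$.

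After the ${\bf v}^*\cdot{\bf B}^*$ cancellation, I would invoke the decomposition ${\bf U}\cdot{\bf n}^* + \frac{|{\bf B}^*|^2}{2} = \frac{\rho}{2}|{\bf v}-{\bf v}^*|^2 + \frac12|{\bf B}-{\bf B}^*|^2 + {\mathcal E}({\bf U})$ established in the proof of the preceding lemma, to recast
\[
  \Phi = \Big(1 - \frac{v_i}{\alpha}\Big)\Big({\bf U}\cdot{\bf n}^* + \frac{|{\bf B}^*|^2}{2}\Big) + \frac{v_i}{\alpha}\frac{|{\bf B}^*|^2}{2} - \frac{p_{tot}(v_i - v_i^*)}{\alpha} + \frac{B_i}{\alpha}\big[({\bf v}-{\bf v}^*)\cdot({\bf B}-{\bf B}^*)\big],
\]
and analogously for $\tilde\Phi$ with the coefficient $1 + \frac{\tilde v_i}{\alpha}$ and the opposite sign on the cross term. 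Since $|\alpha|>|v_i|$ makes the leading coefficients positive and the reservoir ${\bf U}\cdot{\bf n}^* + \frac{|{\bf B}^*|^2}{2}$ is strictly positive for ${\bf U}\in{\mathcal G}$, the task reduces to dominating the indefinite remainder---the pressure term $\frac{p_{tot}(v_i-v_i^*)}{\alpha}$ and the magnetic cross term $\frac{B_i}{\alpha}[({\bf v}-{\bf v}^*)\cdot({\bf B}-{\bf B}^*)]$---by the positive quadratic content of the reservoir.

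For the estimates I would control the magnetic cross term using \eqref{eq:widelyusedIEQ}, which bounds $|\sqrt{\rho}({\bf v}-{\bf v}^*)\cdot({\bf B}-{\bf B}^*)|$ by ${\bf U}\cdot{\bf n}^* + \frac{|{\bf B}^*|^2}{2}$, hence by a factor $\frac{1}{\sqrt\rho}$ times the reservoir; and I would bound the pressure term against the $\frac{\rho}{2}|{\bf v}-{\bf v}^*|^2 + {\mathcal E}({\bf U})$ part via a Young/Cauchy--Schwarz argument, which is what produces the modified sound speed ${\mathscr C}_s = \frac{p}{\rho\sqrt{2e}}$ and, after solving the attendant quadratic, the fast-magnetosonic-type speed ${\mathscr C}_i$. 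When $\Phi$ and $\tilde\Phi$ are combined, the magnetic cross terms do not separate cleanly per state; instead one introduces the free weight $\sigma$ to redistribute the $({\bf B}-{\bf B}^*)$ and $(\tilde{\bf B}-{\bf B}^*)$ contributions, so that after eliminating ${\bf B}^*$ the residual depends only on the state difference $\tilde{\bf B}-{\bf B}$ through the factor $\sqrt{\sigma^2/\rho + (1-\sigma)^2/\tilde\rho}$, which is precisely $f({\bf U},\tilde{\bf U};\sigma)$. Positivity then holds for $|\alpha|>\alpha_i({\bf U},\tilde{\bf U};\sigma)$, and since $\sigma$ is free one takes the minimum over $\sigma$ to obtain the sharp threshold $\alpha_i({\bf U},\tilde{\bf U})$ in \eqref{eq:alpha_i}.

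The main obstacle is this last step: carefully tracking the indefinite cross terms through the completion of squares and extracting the exact wave-speed threshold. The pressure estimate must be tight enough to yield ${\mathscr C}_s$ rather than a cruder sound-speed bound, and the magnetic estimate must be organized so that ${\bf B}^*$ is eliminated in favor of $\tilde{\bf B}-{\bf B}$---this is what forces both the parameter $\sigma$ and the $1/\sqrt\rho$, $1/\sqrt{\tilde\rho}$ weighting. Verifying that the optimized threshold matches the stated ${\mathscr C}_i$ (the larger root of the quadratic in ${\mathscr C}_i^2$) while keeping all inequalities strict is the delicate, computation-heavy core; the structural identity for ${\bf F}_i\cdot{\bf n}^*$ and the inequality \eqref{eq:widelyusedIEQ} are exactly the tools that make this tractable.
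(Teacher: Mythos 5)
Your opening algebra is correct and well targeted: the projection identity ${\bf F}_i({\bf U})\cdot{\bf n}^* = v_i({\bf U}\cdot{\bf n}^*) + p_{tot}(v_i-v_i^*) - B_i\,({\bf S}({\bf U})\cdot{\bf n}^*)$ checks out, the split of the left-hand side into $\Phi+\tilde\Phi$ with the per-state corrections $\pm\frac{B_i}{\alpha}({\bf v}^*\cdot{\bf B}^*)$ is exactly the right way to absorb the $-{\bf v}^*\cdot{\bf B}^*$ piece of \eqref{eq:indentityS}, and the quadratic decomposition ${\bf U}\cdot{\bf n}^* + \frac{|{\bf B}^*|^2}{2} = \frac{\rho}{2}|{\bf v}-{\bf v}^*|^2 + \frac12|{\bf B}-{\bf B}^*|^2 + {\mathcal E}({\bf U})$ is the correct reservoir. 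Note, for calibration, that this paper does not prove the lemma at all --- it is quoted from \cite{Wu2017a}, where the proof occupies several auxiliary lemmas and pages of estimates; your sketch reproduces its opening moves but stops precisely where that work begins.

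The genuine gap is that the estimation stage is described rather than executed, and the description misidentifies what must be estimated. Your own displayed formula for $\Phi$ contains the term $\frac{v_i}{\alpha}\frac{|{\bf B}^*|^2}{2}$ (and $\tilde\Phi$ contains $-\frac{\tilde v_i}{\alpha}\frac{|{\bf B}^*|^2}{2}$), yet your list of the ``indefinite remainder'' names only the pressure term and the magnetic cross term. Those $|{\bf B}^*|^2$ terms are the most dangerous ones: they are quadratic in the free vector ${\bf B}^*$, they carry a sign you do not control, and they cannot be dominated state-by-state --- their sum $\frac{v_i-\tilde v_i}{2\alpha}|{\bf B}^*|^2$ is what forces the two states to be treated jointly and is the source of the convex-combination entry $|\sigma v_i + (1-\sigma)\tilde v_i| + \max\{{\mathscr{C}}_i,\tilde{\mathscr{C}}_i\}$ in the definition of $\alpha_i({\bf U},\tilde{\bf U};\sigma)$. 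Your account of $\sigma$ (redistributing the magnetic cross terms to produce $f$) explains only the $f$ term, not this third entry of the max, which is a sign the coupling mechanism is not yet in hand. Finally, the claim that Young/Cauchy--Schwarz on the pressure term ``produces'' ${\mathscr{C}}_s = \frac{p}{\rho\sqrt{2e}}$ and, after solving a quadratic, the stated ${\mathscr{C}}_i$, is an assertion of the answer rather than a derivation: whether the completion of squares closes \emph{strictly} with exactly these constants, uniformly in $({\bf v}^*,{\bf B}^*)\in{\mathbb R}^3\times{\mathbb R}^3$, is the entire content of the lemma. As it stands the proposal is a plausible plan whose central estimates --- the part that distinguishes this inequality from a routine convexity argument --- remain unproven.
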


	In practice, it is not easy to know the minimum value in 
	\eqref{eq:alpha_i}. Because $\alpha_i({\bf U},\tilde{\bf U})$ only serves as a lower bound, 
	one can certainly replace it with $\alpha_i({\bf U},\tilde{\bf U};\sigma)$ for a special 
	$\sigma$. For example, taking $\sigma=\frac{\rho}{\rho+\tilde \rho}$ 
	minimizes $f({\bf U},\tilde{\bf U};\sigma)$ and implies 
	{\small
		\begin{equation*}
			\alpha_{i} \bigg({\bf U},\tilde{\bf U};\frac{\rho}{\rho+\tilde \rho} \bigg) =
			\max\bigg\{ |v_i|+ {\mathscr{C}}_i, |\tilde v_i| +  \tilde {\mathscr{C}}_i , \frac{|\rho v_i + \tilde \rho \tilde v_i |}{\rho + \tilde \rho} + \max\{ {\mathscr{C}}_i , \tilde {\mathscr{C}}_i \}  \bigg\}
			+ \frac{ |{\bf B}-\tilde{\bf B}| }{ \sqrt{2 (\rho + \tilde \rho)}  }.
	\end{equation*}} 
	Taking $\sigma=\frac{ \sqrt{\rho}}{\sqrt{\rho}+\sqrt{\tilde \rho}}$ gives
	{\small  
		\begin{equation*} 
			\alpha_{i} \bigg({\bf U},\tilde{\bf U};\frac{\sqrt{\rho}}{\sqrt{\rho}+\sqrt{\tilde \rho}} \bigg) =
			\max\bigg\{ |v_i|+ {\mathscr{C}}_i, |\tilde v_i| +  \tilde {\mathscr{C}}_i , \frac{| \sqrt{\rho} v_i + \sqrt{\tilde \rho} \tilde v_i |} {\sqrt{\rho}+\sqrt{\tilde \rho}} + \max\{ {\mathscr{C}}_i , \tilde {\mathscr{C}}_i \}  \bigg\}
			+ \frac{ |{\bf B}-\tilde{\bf B}| }{ \sqrt{\rho} + \sqrt{\tilde \rho}  }.
		\end{equation*}
	}

Let  ${\mathscr{R}}_i ({\bf U})$ denote the spectral radius of the Jacobian matrix 
of the MHD system \eqref{eq:MHD:GP} in the $x_i$-direction, $i=1,2,3$. 
For the gamma-law EOS \eqref{eq:gEOS}, we have \cite{Powell1994}
$$
{\mathscr{R}}_i ({\bf U}) = |v_i| + \mathcal{C}_i,
$$
with 
$$
{\mathcal C}_i := \frac{1}{\sqrt{2}} \left[ \mathcal{C}_s^2 + \frac{ |{\bf B}|^2}{\rho} + \sqrt{ \left( \mathcal{C}_s^2 + \frac{ |{\bf B}|^2}{\rho} \right)^2 - 4 \frac{ \mathcal{C}_s^2 B_i^2}{\rho}  }  \right]^\frac12,
$$
where $\mathcal{C}_s=\sqrt{\gamma p/\rho}$ denotes the local sound speed. 
Let  $a_i := \max\{ {\mathscr{R}}_i ({\bf U}), {\mathscr{R}}_i(\tilde{\bf U}) \}$. For the gamma-law EOS,  it was shown in \cite{Wu2017a} that  
	\begin{gather} \label{eq:aaaaWKL}
		\alpha_{i} ({\bf U},\tilde{\bf U}) 
		\le 2a_i,
		\\ \label{eq:bbbbWKL}
		\alpha_{i} ({\bf U},\tilde{\bf U}) 
		\le a_i + \min \big\{ \big| | v_i| - |\tilde v_i| \big|, \big|  {\mathscr{C}}_i - \tilde {\mathscr{C}}_i  \big|  \big\} 
		+ \frac{ |{\bf B}-\tilde{\bf B}| }{ \sqrt{2 (\rho + \tilde \rho)}  },
	\end{gather}
where the latter implies that $\alpha_{i} ({\bf U},\tilde{\bf U}) \le a_i + {\mathcal O}( |{\bf U} -\tilde {\bf U}| )$, $i=1,2,3$.

\begin{remark}
	We would like to emphasize the importance of the last term at the left-hand side of \eqref{eq:MHD:LLFsplit}.
	This term is very technical, necessary and crucial in proving the PP property of the schemes proposed in the following. 
	The inclusion of this term is a key point in 
	our present PP analysis; see also \cite{Wu2017a}.   
	This term is not always negative or positive. However, dropping  it, the inequality \eqref{eq:MHD:LLFsplit} will not hold, even if we replace $\alpha_i$ with $\chi\alpha_i$ for any constant $\chi \ge 1$.  
	More interestingly and importantly, this term will help us to skillfully 
	utilize the contribution of the discretized GP source term
	that makes the proposed schemes PP.  
\end{remark}

\section{Provably Positivity-Preserving Methods}\label{sec:2Dpcp}

This section develops provably PP methods for the modified MHD system \eqref{eq:MHD:GP} in two dimension ($d=2$). 
The extension to three-dimensional case ($d=3$) is quite straightforward.

To avoid confusing subscripts, we will use 
the symbols $({\tt x},{\tt y})$ to represent the variables $(x_1,x_2)$ in \eqref{eq:MHD:GP}. 
Assume that the 2D spatial domain is divided into a uniform rectangular mesh with cells $\big\{I_{ij}=({\tt x}_{i-\frac{1}{2}},{\tt x}_{i+\frac{1}{2}})\times
({\tt y}_{j-\frac{1}{2}},{\tt y}_{j+\frac{1}{2}}) \big\}$. 
The spatial step-sizes in ${\tt x}$ and ${\tt y}$ directions are denoted by
$\Delta x$ and $\Delta y$, respectively.
 The time interval is also divided into the mesh $\{t_0=0, t_{n+1}=t_n+\Delta t_{n}, n\geq 0\}$
with the time step-size $\Delta t_{n}$ determined by the CFL condition.  

\subsection{First-order scheme} \label{sec:2D:FirstOrder} 
We consider the following first-order scheme for \eqref{eq:MHD:GP} 
\begin{equation} \label{eq:2DMHD:LFscheme}
\begin{split}
\bar {\bf U}_{ij}^{n+1} &= \bar {\bf U}_{ij}^n - \frac{\Delta t_n}{\Delta x} \Big( \hat {\bf F}_1 ( \bar {\bf U}_{ij}^n, \bar {\bf U}_{i+1,j}^n  ) - \hat {\bf F}_1 ( \bar {\bf U}_{i-1,j}^n, \bar {\bf U}_{ij}^n  ) \Big) \\
& \quad
- \frac{\Delta t_n}{\Delta y} \Big( \hat {\bf F}_2 ( \bar {\bf U}_{ij}^n ,\bar {\bf U}_{i,j+1}^n) - \hat {\bf F}_2 ( \bar {\bf U}_{i,j-1}^n, \bar {\bf U}_{ij}^n )  \Big)
- \Delta t_n \big( {\rm div}_{ij} \bar{\bf B}^n \big)  
{\bf S} ( \bar{\bf U}_{ij}^n ),
\end{split}
\end{equation}
where $\bar {\bf U}_{ij}^n $ is the numerical approximation to the cell average of the exact solution ${\bf U}({\tt x},{\tt y},t)$ over $I_{ij}$ at time $t_n$, and $\hat {\bf F}_1,\hat {\bf F}_2$ are  the numerical fluxes. We focus on the Lax--Friedrichs (LF) 
flux 
\begin{equation}\label{eq:LFflux}
\hat {\bf F}_\ell ( {\bf U}^- , {\bf U}^+ ) = \frac{1}{2} \Big( {\bf F}_\ell (  {\bf U}^- ) + {\bf F}_\ell ( {\bf U}^+ ) - 
\alpha_{\ell ,n}^{\tt LF} (  {\bf U}^+ -  {\bf U}^- ) \Big),\quad \ell=1,2, 
\end{equation}
where $\alpha_{\ell ,n}^{\tt LF}$ denotes the numerical viscosity parameter. 
The last term at the right-hand side of \eqref{eq:2DMHD:LFscheme} 
is a penalty-type term, in which ${\rm div}_{ij} \bar{\bf B}^n$ is the discrete divergence \cite{Wu2017a} defined by   
\begin{equation}\label{eq:DisDivB}
\mbox{\rm div} _{ij} \bar {\bf B}^n := \frac{ \left( \bar  B_1\right)_{i+1,j}^n - \left( \bar  B_1 \right)_{i-1,j}^n } {2\Delta x} + \frac{ \left( \bar  B_2 \right)_{i,j+1}^n - \left( \bar B_2 \right)_{i,j-1}^n } {2\Delta y}.
\end{equation}
The discrete divergence $\mbox{\rm div} _{ij} \bar {\bf B}^n$ can be considered as a discretization of  $\nabla \cdot {\bf B}$ at the center of $I_{ij}$. Such discretization was also used in \cite{Chandrashekar2016}.

The PP property of \eqref{eq:2DMHD:LFscheme} is rigorously proved as follows.

\begin{theorem} \label{theo:2DMHD:LFscheme}
	Assume that the parameters $\alpha_{1,n}^{\tt LF}$ and $\alpha_{2,n}^{\tt LF}$ in \eqref{eq:LFflux} satisfy
	\begin{equation}\label{eq:Lxa12}
	\alpha_{1,n}^{\tt LF} > \alpha_{1,n}^{\tt PP} := \max_{i,j} \alpha_1 ( \bar {\bf U}_{i+1,j}^n, \bar {\bf U}_{i-1,j}^n ),~
	\alpha_{2,n}^{\tt LF} > \alpha_{2,n}^{\tt PP} := \max_{i,j} \alpha_2 ( \bar {\bf U}_{i,j+1}^n, \bar {\bf U}_{i,j-1}^n ).
	\end{equation}
	If $\bar {\bf U}_{ij}^n \in {\mathcal G}$  for all $i$ and $j$,
	then the solution $ \bar {\bf U}_{ij}^{n+1}$ of \eqref{eq:2DMHD:LFscheme} belongs to ${\mathcal G}$ under the CFL-type condition
	\begin{equation}\label{eq:CFL:LF2D}
	0< \Delta t_n \bigg( \frac{ \alpha_{1,n}^{\tt LF} }{\Delta x}  + \frac{  \alpha_{2,n}^{\tt LF} } {\Delta y} + 
\vartheta_n \bigg)  \le  1,
	\end{equation}
	where 
\begin{equation}\label{eq:beta2DLF}
	\vartheta_n = \max_{i,j} \frac{ | \mbox{\rm div} _{ij} \bar {\bf B}^n | }{ \sqrt{ \bar \rho_{ij}^n }}.
\end{equation}
\end{theorem}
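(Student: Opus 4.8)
The plan is to verify membership in ${\mathcal G}$ through its linear characterization ${\mathcal G}_*$ from Lemma \ref{theo:eqDefG}, i.e.\ to establish separately that $\bar\rho_{ij}^{n+1}>0$ and that $\bar {\bf U}_{ij}^{n+1}\cdot{\bf n}^* + \frac{|{\bf B}^*|^2}{2}>0$ for every ${\bf v}^*,{\bf B}^*\in{\mathbb R}^3$. The first step is purely algebraic: substitute the LF flux \eqref{eq:LFflux} into \eqref{eq:2DMHD:LFscheme} and regroup the update as a weighted combination of the flux-modified states ${\bf H}_1^{+}=\bar {\bf U}_{i+1,j}^n-\frac{1}{\alpha_{1,n}^{\tt LF}}{\bf F}_1(\bar {\bf U}_{i+1,j}^n)$, ${\bf H}_1^{-}=\bar {\bf U}_{i-1,j}^n+\frac{1}{\alpha_{1,n}^{\tt LF}}{\bf F}_1(\bar {\bf U}_{i-1,j}^n)$ (and their $y$-analogues ${\bf H}_2^{\pm}$), of the central state $\bar {\bf U}_{ij}^n$ carrying weight $1-\lambda_1\alpha_{1,n}^{\tt LF}-\lambda_2\alpha_{2,n}^{\tt LF}$ with $\lambda_1=\Delta t_n/\Delta x$ and $\lambda_2=\Delta t_n/\Delta y$, plus the discretized source $-\Delta t_n({\rm div}_{ij}\bar{\bf B}^n){\bf S}(\bar {\bf U}_{ij}^n)$. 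The flux weights together with the central weight sum to one, and the CFL condition \eqref{eq:CFL:LF2D} keeps every weight nonnegative.

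For the density I would invoke Lemma \ref{lem:rho}: since the structure of $\alpha_\ell$ forces $\alpha_\ell\ge|v_\ell|$ at each neighbour, the bounds \eqref{eq:Lxa12} give $\alpha_{\ell,n}^{\tt LF}>|v_\ell|$ there, so each ${\bf H}_\ell^{\pm}\in{\mathcal G}_\rho$; moreover the source term ${\bf S}=(0,{\bf B},{\bf v},{\bf v}\cdot{\bf B})^\top$ has vanishing density component and hence does not affect $\bar\rho_{ij}^{n+1}$. Convexity of ${\mathcal G}_\rho$ then yields $\bar\rho_{ij}^{n+1}>0$.

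The substantive part is the internal-energy constraint. I would dot the regrouped update with ${\bf n}^*$, add $\frac{|{\bf B}^*|^2}{2}$, and distribute this constant according to the unit-sum weights, so that the central contribution becomes $(1-\lambda_1\alpha_{1,n}^{\tt LF}-\lambda_2\alpha_{2,n}^{\tt LF})\big(\bar {\bf U}_{ij}^n\cdot{\bf n}^*+\frac{|{\bf B}^*|^2}{2}\big)$, a nonnegative weight times a strictly positive quantity. In each coordinate direction the paired flux states ${\bf H}_\ell^{+}+{\bf H}_\ell^{-}$ are \emph{exactly} the argument of Lemma \ref{theo:MHD:LLFsplit} with ${\bf U}=\bar {\bf U}_{i+1,j}^n$, $\tilde{\bf U}=\bar {\bf U}_{i-1,j}^n$ (and analogously in $y$); because \eqref{eq:Lxa12} guarantees $\alpha_{\ell,n}^{\tt LF}>\alpha_\ell$, that lemma bounds these paired contributions from below by a multiple of $-({\bf v}^*\cdot{\bf B}^*)$ whose coefficient is $\frac{\lambda_\ell}{2}$ times the centred difference of $B_\ell$ across the cell. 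Summing both directions and matching these centred differences to the definition \eqref{eq:DisDivB}, the total flux contribution is bounded below by $-\Delta t_n({\rm div}_{ij}\bar{\bf B}^n)({\bf v}^*\cdot{\bf B}^*)$.

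The crux — and the reason the Godunov--Powell form is indispensable — is that the discretized source cancels precisely this remainder. By the identity \eqref{eq:indentityS}, ${\bf S}(\bar {\bf U}_{ij}^n)\cdot{\bf n}^*=(\bar{\bf v}_{ij}-{\bf v}^*)\cdot(\bar{\bf B}_{ij}-{\bf B}^*)-{\bf v}^*\cdot{\bf B}^*$, so the source term produces a $+\Delta t_n({\rm div}_{ij}\bar{\bf B}^n)({\bf v}^*\cdot{\bf B}^*)$ that annihilates the leftover from the flux differences, leaving only $-\Delta t_n({\rm div}_{ij}\bar{\bf B}^n)(\bar{\bf v}_{ij}-{\bf v}^*)\cdot(\bar{\bf B}_{ij}-{\bf B}^*)$. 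This last remnant I would control with the sharp inequality \eqref{eq:widelyusedIEQ} applied to $\bar {\bf U}_{ij}^n$ together with the definition \eqref{eq:beta2DLF} of $\vartheta_n$, yielding the lower bound $-\Delta t_n\vartheta_n\big(\bar {\bf U}_{ij}^n\cdot{\bf n}^*+\frac{|{\bf B}^*|^2}{2}\big)$. Collecting all pieces, $\bar {\bf U}_{ij}^{n+1}\cdot{\bf n}^*+\frac{|{\bf B}^*|^2}{2}$ is bounded below by $\big(1-\lambda_1\alpha_{1,n}^{\tt LF}-\lambda_2\alpha_{2,n}^{\tt LF}-\Delta t_n\vartheta_n\big)\big(\bar {\bf U}_{ij}^n\cdot{\bf n}^*+\frac{|{\bf B}^*|^2}{2}\big)$, which is nonnegative exactly under the CFL bound \eqref{eq:CFL:LF2D}; the strict inequality inherited from the flux-pairing step then gives the strict positivity required for ${\mathcal G}_*$. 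I expect the bookkeeping of this cancellation to be the main obstacle: one must ensure the $({\bf v}^*\cdot{\bf B}^*)$ terms agree in sign and magnitude, which hinges on the exact correspondence between the centred divergence \eqref{eq:DisDivB} and the factor $\lambda_\ell/2$ multiplying the boundary term of Lemma \ref{theo:MHD:LLFsplit} — the point at which the source is forced to be the Powell term rather than any other.
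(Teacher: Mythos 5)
Your proposal is correct and follows essentially the same route as the paper's own proof: the same convex decomposition into flux-modified pairs plus the central state, Lemma \ref{lem:rho} for the density, Lemma \ref{theo:MHD:LLFsplit} for the paired flux terms, and the identity \eqref{eq:indentityS} together with \eqref{eq:widelyusedIEQ} to let the Powell source cancel the $({\bf v}^*\cdot{\bf B}^*)$ remainder and control the rest via $\vartheta_n$. The only cosmetic difference is bookkeeping: the paper absorbs the $+\Delta t_n(\mbox{\rm div}_{ij}\bar{\bf B}^n)({\bf v}^*\cdot{\bf B}^*)$ piece of the source into its $\Pi_1$ so that $\Pi_1>0$ outright, whereas you state the cancellation after bounding the flux contribution, which is the same computation.
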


\begin{proof}
	Substituting  \eqref{eq:LFflux} into \eqref{eq:2DMHD:LFscheme} gives
	\begin{equation}\label{eq:proofWKL-EQ1}
	\bar {\bf U}_{ij}^{n+1} = \lambda_1 {\bf \Xi}_1 + \lambda_2 {\bf \Xi}_2 
	+  (1-\lambda) \bar {\bf U}_{ij}^n 
	- \Delta t_n \big( {\rm div}_{ij} \bar{\bf B}_{ij}^n \big)  {\bf S} ( \bar{\bf U}_{ij}^n ),   
	\end{equation}
	where 
	$$\lambda_1 = \frac{ \alpha_{1,n}^{\tt LF} \Delta t_n } { \Delta x}, \quad  \lambda_2 = \frac{ \alpha_{2,n}^{\tt LF} \Delta t_n }{\Delta y}, \quad \lambda = \lambda_1+ \lambda_2,$$
	 and 
	\begin{align*}
	{\bf \Xi}_1 &= \frac12
	\left( \bar {\bf U}_{i+1,j}^n - \frac{ {\bf F}_1( \bar {\bf U}_{i+1,j}^n)}{ \alpha_{1,n}^{\tt LF} } +
	\bar {\bf U}_{i-1,j}^n + \frac{ {\bf F}_1( \bar {\bf U}_{i-1,j}^n) }{ \alpha_{1,n}^{\tt LF} } \right), \\
    	{\bf \Xi}_2	&= \frac12 \left( \bar {\bf U}_{i,j+1}^n - \frac{ {\bf F}_2( \bar {\bf U}_{i,j+1}^n)}{ \alpha_{2,n}^{\tt LF} } +
	\bar {\bf U}_{i,j-1}^n + \frac{ {\bf F}_2( \bar {\bf U}_{i,j-1}^n) }{ \alpha_{2,n}^{\tt LF} } \right).
	\end{align*}

	Under the condition \eqref{eq:Lxa12}, Lemma \ref{lem:rho} implies ${\bf \Xi}_k \in {\mathcal G}_\rho$, 
	i.e., the first component of ${\bf \Xi}_k$ is positive, $k=1,2$. Therefore, we have $\bar {\rho}_{ij}^{n+1} 
	> (1-\lambda) \bar {\rho}_{ij}^{n} \ge 0$, by noting that first component of ${{\bf S} ( \bar{\bf U}_{ij}^n )}$ is zero. 
	
	For any ${\bf v}^*,{\bf B}^* \in {\mathbb R}^3$, by using the identity \eqref{eq:indentityS}, we 
	derive from \eqref{eq:proofWKL-EQ1} that 
	$$
	\bar {\bf U}_{ij}^{n+1} \cdot {\bf n}^* + \frac{ |{\bf B}^*|^2 }{2} = \Pi_1 + \Pi_2, 
	$$
	where 
	\begin{align*}
		\Pi_1 &= \lambda_1 \left( {\bf \Xi}_1 \cdot {\bf n}^* + \frac{|{\bf B}^*|^2}{2} \right) 
	+ \lambda_2 \left( {\bf \Xi}_2 \cdot {\bf n}^* + \frac{|{\bf B}^*|^2}{2} \right)  
		+ \Delta t_n \big( \mbox{\rm div} _{ij} \bar {\bf B}^n \big) \big( {\bf v}^* \cdot {\bf B}^* \big),\\
		\Pi_2 &= 
		(1-\lambda) \left( \bar {\bf U}_{ij}^n \cdot {\bf n}^* + \frac{|{\bf B}^*|^2}{2} \right)
		- \Delta t_n \big( {\rm div}_{ij} \bar{\bf B}^n \big) ( \bar{\bf v}_{ij}^n - {\bf v}^*  )
		\cdot  ( \bar{\bf B}_{ij}^n - {\bf B}^*  ) .
	\end{align*}
	The inequality \eqref{eq:MHD:LLFsplit} implies 
\begin{equation*}
		\begin{aligned}
		\Pi_1 &>  \frac12 \left( - \lambda_1 \frac{  ( \bar B_1)_{i+1,j}^n - ( \bar B_1)_{i-1,j}^n }
		{  \alpha_{1,n}^{\tt LF}  } 
		 - \lambda_2 \frac{  ( \bar B_2)_{i,j+1}^n - ( \bar B_2)_{i,j-1}^n }
		{  \alpha_{2,n}^{\tt LF}  } \right) ( {\bf v}^* \cdot {\bf B}^* ) 
		\\
		&\quad + \Delta t_n \big( \mbox{\rm div} _{ij} \bar {\bf B}^n \big) \big( {\bf v}^* \cdot {\bf B}^* \big) = 0.
	\end{aligned}
\end{equation*}
	Using the inequality \eqref{eq:widelyusedIEQ} gives 
	\begin{align*}
		\Pi_2 & \ge (1-\lambda) 
		\left( \bar {\bf U}_{ij}^n \cdot {\bf n}^* + \frac{|{\bf B}^*|^2}{2} \right)
		- \Delta t_n  \vartheta_n \left| \sqrt{\bar \rho_{ij}^n}  ( \bar{\bf v}_{ij}^n - {\bf v}^*  )
		\cdot  ( \bar{\bf B}_{ij}^n - {\bf B}^*  )  \right|
		\\
		&\ge   (1-\lambda- \Delta t_n  \vartheta_n)  \left( \bar {\bf U}_{ij}^n \cdot {\bf n}^* + \frac{|{\bf B}^*|^2}{2} \right) \ge 0.
	\end{align*}
Hence we obtain $\bar {\bf U}_{ij}^{n+1} \cdot {\bf n}^* + \frac{ |{\bf B}^*|^2 }{2} > 0,~\forall {\bf v}^*,{\bf B}^* \in {\mathbb R}^3$. 

According Lemma \ref{theo:eqDefG}, we have $\bar {\bf U}_{ij}^{n+1} \in {\mathcal G}$. 
The proof is completed.
\end{proof}

\begin{remark}
Let $\alpha_{\ell,n}^{\tt std}:=\max_{i,j} {\mathscr{R}}_\ell ( \bar{\bf U}_{ij}^n )$ be the standard parameter in the LF flux. 
It was proved in \cite{Wu2017a} that even the 1D LF scheme 
with this standard parameter is not PP in general, regardless of how small 
the CFL number is. 
While the lower bounds given in \eqref{eq:Lxa12} for the parameters $\alpha_{\ell,n}^{\tt LF},\ell=1,2$, are acceptable, because 
one can derive from \eqref{eq:aaaaWKL} and \eqref{eq:bbbbWKL} that 
$$
\alpha_{\ell,n}^{\tt PP}
\le 2 \alpha_{\ell,n}^{\tt std},\qquad \ell=1,2,
$$
and for smooth problems,
$$
\alpha_{\ell,n}^{\tt PP} \le \alpha_{\ell,n}^{\tt std} 
+ {\mathcal O}( \max\{\Delta x,\Delta y\} ),\qquad \ell=1,2.
$$ 
\end{remark}

\begin{remark}
The scheme
\eqref{eq:2DMHD:LFscheme} without the penalty-type term 
reduces to the 2D LF scheme for 
the conservative MHD system \eqref{eq:MHD}. 
It was shown in \cite{Wu2017a} that 
the 2D LF scheme for \eqref{eq:MHD} is generally not PP,  
unless a discrete divergence-free condition, $\mbox{\rm div} _{ij} \bar {\bf B}^n=0$, is satisfied. 
While, by including the penalty-type term, 
the scheme \eqref{eq:2DMHD:LFscheme} becomes PP even if  
that discrete divergence-free condition is not met.  
\end{remark}

\subsection{High-order schemes}\label{sec:High2D}

We now present the provably PP high-order methods for the 2D MHD equations \eqref{eq:MHD:GP}. We mainly focus on the PP 
high-order discontinuous Galerkin (DG) methods, keeping in mind that 
the same framework also applies to high-order finite volume schemes. 
The PP high-order schemes are built on the locally divergence-free schemes designed in Section \ref{sec:111}.  

For convenience, we first focus on the forward Euler method for time discretization, while high-order time discretization will be discussed later. 

\subsubsection{Locally divergence-free schemes} \label{sec:111}

To achieve high-order spatial accuracy, we approximate the exact solution ${\bf U}({\tt x},{\tt y},t_n)$ 
with a discontinuous piecewise polynomial function  
${\bf U}_h^n ( {\tt x},{\tt y} )$, which is sought in the locally divergence-free space \cite{Li2005} 
$$
{\mathbb W}_h^{\tt K} 
= \left\{ {\bf w}=(w_1,\cdots,w_8)^\top~\Big|~ w_\ell \big|_{I_{ij}} \in {\mathbb P}^{\tt K} (I_{ij}),~  
\bigg( \frac{ \partial w_5}{\partial {\tt x}} + \frac{ \partial w_6}{\partial {\tt y}}  \bigg)\bigg|_{I_{ij}}  = 0,~\forall i,j,\ell   \right\},
$$
where ${\mathbb P}^{\tt K} (I_{ij})$ denotes the space of polynomials in $I_{ij}$ of degree at most $\tt K$.

We consider the ${\mathbb P}^{\tt K}$-based locally divergence-free DG method for the Godunov form \eqref{eq:MHD:GP} of the ideal MHD equations. Specifically, ${\bf U}_h^n \in {\mathbb W}_h^{\tt K} $ is evolved by 
\begin{equation}\label{eq:2DDGUh}
\begin{split}
& \int_{I_{ij}} {\bf w} \cdot  \frac{{\bf U}_h^{n+1}- {\bf U}_h^{n}}{\Delta t_n}   d{\tt x} d {\tt y}
 =  \int_{I_{ij}}   \partial_{\tt x} {\bf w} \cdot  
 {\bf F}_1  ( {\bf U}_h^{n} )  d{\tt x} d {\tt y}  
  \\ 
& 
 +  \int_{I_{ij}}  \partial_{\tt y} {\bf w}  \cdot  
 {\bf F}_2  ( {\bf U}_h^{n} )  d{\tt x} d {\tt y} 
 - \int_{{\tt y}_{j-\frac12}}^{{\tt y}_{j+\frac12}} {\bf H}_{1,i} ({\tt y}) d {\tt y} 
- \int_{{\tt x}_{i-\frac12}}^{{\tt x}_{i+\frac12}} {\bf H}_{2,j} ({\tt x}) d {\tt x},~\forall {\bf w} \in {\mathbb W}_h^{\tt K},
\end{split} 
\end{equation} 
where 
\begin{align*}
	\begin{split}
& {\bf H}_{1,i} ({\tt y}) 
= {\bf w} ( {\tt x}_{i+\frac12} ^-, {\tt y} ) \cdot  \hat {\bf F}_{1,i+\frac12}({\tt y}) 
- {\bf w} ( {\tt x}_{i-\frac12} ^+, {\tt y} ) \cdot  \hat {\bf F}_{1,i-\frac12}({\tt y})
\\
&   + {\mathscr B}_{1,i+\frac12} ({\tt y}) 
{\bf w} ( {\tt x}_{i+\frac12} ^-, {\tt y} ) \cdot {\bf S} ({\bf U}_h^n( {\tt x}_{i+\frac12} ^-, {\tt y} ) )
 + {\mathscr B}_{1,i-\frac12} ({\tt y}) 
{\bf w} ( {\tt x}_{i-\frac12} ^+, {\tt y} ) \cdot {\bf S} ( {\bf U}_h^n( {\tt x}_{i-\frac12} ^+, {\tt y} )),
\end{split} 
\\
	\begin{split}
&	{\bf H}_{2,j} ({\tt x}) 
	= {\bf w} ( {\tt x}, {\tt y}_{j+\frac12} ^- ) \cdot  \hat {\bf F}_{2,j+\frac12}({\tt x}) 
	- {\bf w} ( {\tt x}, {\tt y}_{j-\frac12} ^+ ) \cdot  \hat {\bf F}_{2,j-\frac12}({\tt x})
	\\
	&   + {\mathscr B}_{2,j+\frac12} ({\tt x}) 
	{\bf w} ( {\tt x}, {\tt y}_{j+\frac12} ^- ) \cdot {\bf S} ( {\bf U}_h^n( {\tt x}, {\tt y}_{j+\frac12} ^- )) 
	+ {\mathscr B}_{2,j-\frac12} ({\tt x}) 
	{\bf w} ( {\tt x}, {\tt y}_{j-\frac12} ^+ ) \cdot {\bf S} ( {\bf U}_h^n( {\tt x}, {\tt y}_{j-\frac12} ^+ )),
\end{split} 
\end{align*}
with the superscripts $-$ and $+$ on ${\tt x}_{i+\frac12} $ indicating that the associated limit is a left- or right-handed limit, and 
\begin{align*}
&
\hat {\bf F}_{1,i+\frac12}({\tt y}) = \hat {\bf F}_1 \left( {\bf U}_h^n ( {\tt x}_{i+\frac12} ^-, {\tt y} ), {\bf U}_h^n ( {\tt x}_{i+\frac12} ^+, {\tt y} ) \right),\\
&
\hat {\bf F}_{2,j+\frac12}({\tt x}) = \hat {\bf F}_2 \left( {\bf U}_h^n ( {\tt x}, {\tt y}_{j+\frac12} ^- ), {\bf U}_h^n ( {\tt x}, {\tt y}_{j+\frac12} ^+ ) \right),\\
&
{\mathscr B}_{1,i+\frac12} ({\tt y}) = \frac12 \left( (B_1)_h^n( {\tt x}_{i+\frac12} ^+,{\tt y} ) - (B_1)_h^n( {\tt x}_{i+\frac12} ^- ,{\tt y} )\right),\\
&
{\mathscr B}_{2,j+\frac12} ({\tt x}) = \frac12 \left( (B_2)_h^n( {\tt x},{\tt y}_{j+\frac12} ^+ ) - (B_2)_h^n( {\tt x} ,{\tt y}_{j+\frac12} ^- )\right),
\end{align*}
with $\hat {\bf F}_1, \hat {\bf F}_2$ taken the LF fluxes in \eqref{eq:LFflux}. 
Similar discretization of the GP source term in \eqref{eq:MHD:GP} was also used in \cite{Chandrashekar2016b,LiuShuZhang2017} recently.

In the computations, the boundary and volume integrals at the right-hand side of \eqref{eq:2DDGUh} 
are discretized by the Gaussian quadratures 
\begin{align*}
	& \int_{I_{ij}}  \big( \partial_{\tt x} {\bf w} \cdot  
	{\bf F}_1  ( {\bf U}_h^{n} )  \big) d{\tt x} d {\tt y} 
 \approx 
	\Delta x \Delta y 
	\sum_{\mu=1}^{\tt Q} \sum_{\nu=1}^{\tt Q} \omega_{\mu} \omega_{\nu} 
	\big( \partial_{\tt x} {\bf w} \cdot  
	{\bf F}_1  ( {\bf U}_h^{n} )  \big) ( {\tt x}_i^{(\mu)} , {\tt y}_j^{(\nu)}  ),
	\\
		& \int_{I_{ij}}  \big( \partial_{\tt y} {\bf w} \cdot  
	{\bf F}_2  ( {\bf U}_h^{n} )  \big) d{\tt x} d {\tt y} 
	\approx 
	\Delta x \Delta y 
	\sum_{\mu=1}^{\tt Q} \sum_{\nu=1}^{\tt Q} \omega_{\mu} \omega_{\nu} 
	\big( \partial_{\tt y} {\bf w} \cdot  
	{\bf F}_2  ( {\bf U}_h^{n} )  \big) ( {\tt x}_i^{(\mu)} , {\tt y}_j^{(\nu)}  ),
	\\
	& \int_{{\tt y}_{j-\frac12}}^{{\tt y}_{j+\frac12}} {\bf H}_{1,i} ({\tt y}) d {\tt y} 
	\approx \Delta y \sum_{\mu=1}^{\tt Q} \omega_{\mu} {\bf H}_{1,i} ({\tt y}_j^{(\mu)}),
	\quad 
	\int_{{\tt x}_{i-\frac12}}^{{\tt x}_{i+\frac12}} {\bf H}_{2,j} ({\tt x}) d {\tt x}
	 \approx \Delta x \sum_{\mu=1}^{\tt Q} \omega_{\mu} {\bf H}_{2,j} ({\tt x}_i^{(\mu)}),
\end{align*}
where ${\mathbb S}_i^{\tt x}=\{  {\tt x}_i^{(\mu)} \}_{\mu=1}^{\tt Q}$ and ${\mathbb S}_j^{\tt y}=\{  {\tt y}_j^{(\mu)} \}_{\mu=1}^{\tt Q}$ are the $\tt Q$-point Gauss-Legendre quadrature nodes in 
$[ {\tt x}_{i-\frac12}, {\tt x}_{i+\frac12} ]$ and $[ {\tt y}_{j-\frac12}, {\tt y}_{j+\frac12} ]$, respectively, and $\{\omega_\mu\}_{\mu=1}^{\tt Q}$ are the associated weights satisfying
$\sum_{\mu=1}^{\tt Q} \omega_\mu = 1$, with 
${\tt Q} \ge {\tt K}+1$ for accuracy requirement \cite{Cockburn0}. 

Let denote 
$${\bf U}_h^n\big|_{I_{ij}}=: {\bf U}_{ij}^n ({\tt x},{\tt y}),
$$ 
whose cell average 
 over $I_{ij}$ is denoted by $\bar {\bf U}_{ij}^n$. One can derive from \eqref{eq:2DDGUh} the 
 evolution equations for the cell averages $\{\bar {\bf U}_{ij}^n\}$ as follows 
\begin{equation}\label{eq:2DMHD:cellaverage}
\bar {\bf U}_{ij}^{n+1}  = \bar {\bf U}_{ij}^{n} + \Delta t_n {\bf L}_{ij} ( {\bf U}_h^n  ),
\end{equation}
where 
\begin{equation*}
	\begin{split}
		{\bf L}_{ij} ( {\bf U}_h^n  ) & := 
		- \frac{1}{\Delta x} \sum\limits_{\mu =1}^{\tt Q}  \omega_\mu \bigg[ \left(
		\hat {\bf F}_{1,i+\frac12}({\tt y}_{j}^{(\mu)}) 
		-   \hat {\bf F}_{1,i-\frac12}({\tt y}_{j}^{(\mu)})
		\right) 
		\\
		& \quad 
		+ \left( {\mathscr B}_{1,i+\frac12} ({\tt y}_{j}^{(\mu)}) 
		{\bf S} ( {\bf U}_h^n( {\tt x}_{i+\frac12} ^-, {\tt y}_{j}^{(\mu)} ) )
		+ {\mathscr B}_{1,i-\frac12} ({\tt y}_{j}^{(\mu)})  {\bf S} ( {\bf U}_h^n( {\tt x}_{i-\frac12} ^+, {\tt y}_{j}^{(\mu)} ) )
		\right)  \bigg]
		\\
		&\quad  -  \frac{ 1 }{\Delta y} \sum\limits_{\mu =1}^{\tt Q} \omega_\mu \bigg[\left(
		\hat {\bf F}_{2,j+\frac12}({\tt x}_{i}^{(\mu)}) 
		-   \hat {\bf F}_{2,j-\frac12}({\tt x}_{i}^{(\mu)}) \right)
		\\
		& \quad 
		+
		\left(
		{\mathscr B}_{2,j+\frac12} ({\tt x}_{i}^{(\mu)}) 
		{\bf S} ( {\bf U}_h^n( {\tt x}_{i}^{(\mu)}, {\tt y}_{j+\frac12} ^- ) )
		+ {\mathscr B}_{2,j-\frac12} ({\tt x}_{i}^{(\mu)})  {\bf S} ( {\bf U}_h^n( {\tt x}_{i}^{(\mu)}, {\tt y}_{j-\frac12} ^+ ))
		\right) \bigg].
	\end{split}
\end{equation*}

The discrete equations \eqref{eq:2DMHD:cellaverage} can also be derived 
from a finite volume method for \eqref{eq:MHD:GP}, if the approximate function 
${\bf U}_h^n  $ in \eqref{eq:2DMHD:cellaverage} 
is reconstructed from   
the cell averages $\{\bar {\bf U}_{ij}^n\}$ by a locally 
divergence-free approach (see e.g., \cite{ZhaoTang2017}) such that ${\bf U}_h^n \in {\mathbb W}_h^{\tt K}  $. 

When ${\tt K}=0$, 
the above DG and finite volume schemes reduce to 
the first-order scheme \eqref{eq:2DMHD:LFscheme}, which has been proved to be PP. 
When ${\tt K}\ge 1$, 
the above high-order DG and finite volume schemes are not PP in general. 
However, we find that these high-order locally divergence-free schemes 
can be modified to provably PP high-order schemes, see the discussions 
in Section \ref{sec:222}.

\subsubsection{Provably PP schemes}\label{sec:222}

Based on the high-order locally divergence-free schemes presented above,  
we construct the provably PP high-order DG and finite volume schemes 
as follows. The rigorous proof of the PP property will be given later.  

\vspace{2mm}
\noindent
{\bf Step 0.} Initialization. Set $t=0$ and $n=0$. 
Using the initial data  
computes $\{\bar {\bf U}_{ij}^0\}$ 
and $\{ {\bf U}_{ij}^0 ({\tt x},{\tt y}) \}$. 
The admissibility of  $\bar {\bf U}_{ij}^0$ can be ensured by the convexity of $\mathcal G$, 
and ${\bf U}_h^0 \in  {\mathbb W}_h^{\tt K} $ is easily guaranteed if  
a local $L^2$-projection of the initial data onto $ {\mathbb W}_h^{\tt K}$ is used.

\vspace{2mm}
\noindent
{\bf Step 1.} Given admissible cell averages $\big\{\bar {\bf U}_{ij}^n\big\}$ 
and ${\bf U}_h^n  \in {\mathbb W}_h^{\tt K} $, perform the PP limiting procedure. 
Use the PP limiter in \cite{cheng} to modify the polynomials $\big\{ {\bf U}_{ij}^n ({\tt x},{\tt y})  \big\}$ as
$\big\{\widetilde {\bf U}_{ij}^n ({\tt x},{\tt y})  \big\}$, such that the modified polynomials satisfy
\begin{equation}\label{eq:FVDGsuff}
\widetilde {\bf U}_{ij}^n ({\tt x},{\tt y}) 
\in {\mathcal G}, \quad 
\forall ( {\tt x},{\tt y} ) \in {\mathbb S}_{ij} := ( \hat{\mathbb S}_i^{\tt x} \otimes {\mathbb S}_j^{\tt y} ) \cup  ( {\mathbb S}_i^{\tt x} \otimes \hat{\mathbb S}_j^{\tt y} ), 
\end{equation}
where $ \hat{\mathbb S}_i^{\tt x} = \{ \hat {\tt x}_i^{(\nu)}  \}_{\nu=1}^{\tt L}$,
$ \hat{\mathbb S}_i^{\tt y} = \{ \hat {\tt y}_j^{(\nu)}  \}_{\nu=1}^{\tt L}$ are
the $\tt L$-point Gauss-Lobatto quadrature nodes in the intervals $[{\tt x}_{i-\frac{1}{2}},{\tt x}_{i+\frac{1}{2}} ]$, $[{\tt y}_{j-\frac{1}{2}},{\tt y}_{j+\frac{1}{2}} ]$, respectively, 
with $2{\tt L}-3\ge {\tt K}$. 
Let $\widetilde{\bf U}_h^n ( {\tt x},{\tt y} )$ denote the discontinuous piecewise polynomial function
defined by $\widetilde{\bf U}_{ij}^n ({\tt x},{\tt y})$. 
Then we have $\widetilde{\bf U}_h^n  \in {\mathbb W}_h^{\tt K} $, 
because the PP limiter \cite{cheng} only involves element and component wise
convex combination of ${\bf U}_{ij}^n ({\tt x},{\tt y})$ and its cell average.

\vspace{2mm}
\noindent
{\bf Step 2.} Update the cell averages by the scheme
\begin{equation}\label{eq:PP2DMHD:cellaverage}
\bar {\bf U}_{ij}^{n+1}  = \bar {\bf U}_{ij}^{n} + \Delta t_n {\bf L}_{ij} ( \widetilde{\bf U}_h^n  ),
\end{equation}
{\em As shown in Theorem \ref{thm:PP:2DMHD} later, the PP limiting procedure in {Step 1} can ensure the computed
	$\bar {\bf U}_{ij}^{n+1} \in {\mathcal G}$, which meets the condition of performing PP limiting procedure
	in the next time-forward step.}

\vspace{2mm}
\noindent
{\bf Step 3.} Build the discontinuous piecewise polynomial function 
${\bf U}_h^{n+1}$. For our $\mathbb{P}^{\tt K}$-based DG method $({\tt K}\ge 1)$, 
evolve the high-order ``moments'' of the polynomials $\{ {\bf U}_{ij}^{n+1}({\tt x},{\tt y}) \}$ by \eqref{eq:2DDGUh}, in which   
${\bf U}_h^n$ is replaced with $\widetilde{\bf U}_h^n$. 
For a high-order finite volume scheme, 
reconstruct the approximate solution polynomials $\{{\bf U}_{ij}^{n+1}({\tt x},{\tt y})\}$ from the
cell averages $\big\{ \bar{\bf U}_{ij}^{n+1} \big\}$ by a locally 
divergence-free approach (see e.g., \cite{ZhaoTang2017}) such that ${\bf U}_h^{n+1} \in {\mathbb W}_h^{\tt K}  $. The details are omitted here, as these does not affect the PP property of the proposed schemes.

\vspace{2mm}
\noindent
{\bf Step 4.} Set $t_{n + 1}  = t_n  + \Delta t_n$. If $t_{n + 1}  < T_{\tt stop}$, assign $n \leftarrow n+1$ and go to {Step 1}, where the admissibility of $\big\{\bar {\bf U}_{ij}^{n+1}\}$
has been ensured in Step 2. Otherwise, output numerical results and stop.

We now prove the PP property, i.e., show that 
the cell averages $\bar {\bf U}_{ij}^{n+1}$ 
computed by \eqref{eq:PP2DMHD:cellaverage} always belong to $\mathcal G$. 
The discovery of the PP property and the proof are very nontrivial and technical, becoming the most highlighted point of this paper. It is worth emphasizing that using 
the locally divergence-free scheme as the base scheme 
is crucial for achieving the provably PP scheme.  
The locally divergence-free 
property also plays an important role in the proof of the PP property. 

Let $ \{\hat \omega_\nu\}_{\nu=1} ^ {\tt L}$ denote the  
$\tt L$-point Gauss-Lobatto quadrature weights satisfying that 
$\sum_{\nu=1}^{\tt L} \hat\omega_\nu = 1,~\omega_1 = \omega_{\tt L} = \frac{1}{{\tt L}({\tt L}-1)}.$

\begin{theorem} \label{thm:PP:2DMHD}
	If the polynomial vectors $\{\widetilde{\bf U}_{ij}^n({\tt x},{\tt y})\}$ satisfy 
	the condition \eqref{eq:FVDGsuff}, 
	then the scheme \eqref{eq:PP2DMHD:cellaverage} preserves 
	$\bar{\bf U}_{ij}^{n+1} \in {\mathcal G}$ under the CFL-type condition
	\begin{equation}\label{eq:CFL:2DMHD}
	0< \Delta t_n \left( \frac{\alpha_{1,n}^{\tt LF} }{\Delta x} + \frac{ \alpha_{2,n}^{\tt LF} }{\Delta y} \right) 
	 \le   \theta \hat \omega_1 ,
	\end{equation}
	where 	
	$$
	\theta = \frac{1}  { 1+  \max \left\{ \frac{\vartheta_1}{\alpha_{1,n}^{\tt LF}}, \frac{\vartheta_2}{\alpha_{2,n}^{\tt LF}}  \right\}  }, 
	$$
	the parameters $\alpha_{1,n}^{\tt LF}$ and $\alpha_{2,n}^{\tt LF}$ are set to satisfy
	$$ \alpha_{1,n}^{\tt LF} >
	\max_{ i,j,\mu}  \alpha_1 \big(  { \bf U }_{i+\frac12,j}^{\pm,\mu} ,  { \bf U }_{i-\frac12,j}^{\pm,\mu}  \big)
	,\quad \alpha_{2,n}^{\tt LF} >  \max_{ i,j,\mu}  \alpha_2 \big(  { \bf U }_{i,j+\frac12}^{\mu,\pm} ,  { \bf U }_{i,j-\frac12}^{\mu,\pm}  \big),
	$$
	and 
	\begin{align*}
		&{\bf U}^{\pm,\mu}_{i+\frac{1}{2},j} := \widetilde{\bf U}_h^n ({\tt x}_{i+\frac12}^\pm,{\tt y}_j^{(\mu)}),\quad
	{\bf U}^{\mu,\pm}_{i,j+\frac{1}{2}} := \widetilde{\bf U}_h^n ({\tt x}_i^{(\mu)},{\tt y}_{j+\frac12}^\pm),
	\\
	& \vartheta_1:=\max_{i,j,\mu} \max \left\{ 
	\frac{ | {\mathscr B}_{1,i+\frac12} ({\tt y}_{j}^{(\mu)}) | } { \sqrt{ \rho_{i+\frac12,j}^{-,\mu} } },
	\frac{ | {\mathscr B}_{1,i-\frac12} ({\tt y}_{j}^{(\mu)}) | } { \sqrt{ \rho_{i-\frac12,j}^{+,\mu} } }
	 \right\},
	 \\
	 & 
	 \vartheta_2:=\max_{i,j,\mu} \max \left\{ 
	 \frac{ | {\mathscr B}_{2,j+\frac12} ({\tt x}_{i}^{(\mu)}) | } { \sqrt{ \rho_{i,j+\frac12}^{\mu,-} } },
	 \frac{ | {\mathscr B}_{2,j-\frac12} ({\tt x}_{i}^{(\mu)}) | } { \sqrt{ \rho_{i,j-\frac12}^{\mu,+} } }
	 \right\}.
	\end{align*}
\end{theorem}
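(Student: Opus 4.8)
The plan is to verify the two constraints defining $\mathcal G$ separately, using the linear characterization $\mathcal G_*$ of Lemma~\ref{theo:eqDefG} for the internal-energy constraint. Write $\lambda_1=\alpha_{1,n}^{\tt LF}\Delta t_n/\Delta x$, $\lambda_2=\alpha_{2,n}^{\tt LF}\Delta t_n/\Delta y$, $\lambda=\lambda_1+\lambda_2$, and set $q({\bf U}):={\bf U}\cdot{\bf n}^*+\frac12|{\bf B}^*|^2$, which is affine in ${\bf U}$. Since ${\tt Q}\ge{\tt K}+1$ and $2{\tt L}-3\ge{\tt K}$, the $\tt L$-point Gauss--Lobatto rule, whose two endpoints are the cell interfaces, integrates the degree-$\tt K$ cell-average integrand exactly, so $\bar{\bf U}_{ij}^n$ expands as a convex combination of the trace values $\widetilde{\bf U}_h^n$ at nodes of ${\mathbb S}_{ij}$. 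For the density I substitute \eqref{eq:LFflux} into \eqref{eq:PP2DMHD:cellaverage}, group the interface traces with the fluxes into states of the form ${\bf U}\pm(\alpha_{1,n}^{\tt LF})^{-1}{\bf F}_1({\bf U})$ (and likewise in ${\tt y}$), and apply Lemma~\ref{lem:rho}; since ${\bf S}$ has vanishing density component and $\alpha_{\ell,n}^{\tt LF}>|v_\ell|$ at every node, this yields $\bar\rho_{ij}^{n+1}>0$.

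For the internal energy I fix arbitrary ${\bf v}^*,{\bf B}^*$ and show $q(\bar{\bf U}_{ij}^{n+1})>0$. I split $\bar{\bf U}_{ij}^n=\frac{\lambda_1}{\lambda}\bar{\bf U}_{ij}^n+\frac{\lambda_2}{\lambda}\bar{\bf U}_{ij}^n$ and expand the first copy by the Gauss--Lobatto rule in ${\tt x}$ and the second in ${\tt y}$. Combining the interface traces with the Lax--Friedrichs fluxes exactly as in the proof of Theorem~\ref{theo:2DMHD:LFscheme}, the ${\tt x}$-part at each transverse node ${\tt y}_j^{(\mu)}$ reorganizes into two building blocks of the form in \eqref{eq:MHD:LLFsplit}, each carrying weight $\lambda_1\omega_\mu$: an \emph{outer} block pairing the neighbour traces ${\bf U}^{+,\mu}_{i+\frac12,j},{\bf U}^{-,\mu}_{i-\frac12,j}$ and an \emph{inner} block pairing the cell's own traces ${\bf U}^{-,\mu}_{i+\frac12,j},{\bf U}^{+,\mu}_{i-\frac12,j}$, together with a leftover multiple $\lambda_1(\hat\omega_1-\lambda)/\lambda$ of each (admissible) inner trace and the positively weighted interior Gauss--Lobatto traces. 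Every trace appearing here lies in some ${\mathbb S}_{i',j'}$ and is admissible by \eqref{eq:FVDGsuff}, and the stated lower bounds on $\alpha_{\ell,n}^{\tt LF}$ dominate $\alpha_\ell$ of each pair, so \eqref{eq:MHD:LLFsplit} applies and bounds each block's $q$ below by minus its $\frac{B_\ell-\tilde B_\ell}{2\alpha_{\ell,n}^{\tt LF}}({\bf v}^*\cdot{\bf B}^*)$ term. The presence of the endpoint weight $\hat\omega_1$ in \eqref{eq:CFL:2DMHD} is exactly the requirement that this leftover coefficient be nonnegative.

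The crux is that all $({\bf v}^*\cdot{\bf B}^*)$-terms cancel. They originate from the building-block bounds (the last term of \eqref{eq:MHD:LLFsplit}) and from the discretized GP source (the $-{\bf v}^*\cdot{\bf B}^*$ piece of identity \eqref{eq:indentityS}), the latter carrying precisely the jump factors ${\mathscr B}_{\ell,i\pm\frac12}$. After inserting $\Delta t_n/\Delta x=\lambda_1/\alpha_{1,n}^{\tt LF}$ and summing the inner and outer blocks against the two source terms at each ${\tt y}_j^{(\mu)}$, I expect the ${\tt x}$-total to collapse to $-\frac{\Delta t_n}{\Delta x}\sum_\mu\omega_\mu\big[(B_1)^{-,\mu}_{i+\frac12,j}-(B_1)^{+,\mu}_{i-\frac12,j}\big]({\bf v}^*\cdot{\bf B}^*)$, namely minus $\partial_{\tt x}B_1$ integrated across the cell at ${\tt y}_j^{(\mu)}$. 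Because the $\tt Q$-point Gauss rule is exact for the degree-$\tt K$ integrand, this term and its ${\tt y}$-counterpart add up to $-\frac{\Delta t_n}{\Delta x\Delta y}({\bf v}^*\cdot{\bf B}^*)\int_{I_{ij}}\big(\partial_{\tt x}B_1+\partial_{\tt y}B_2\big)\,d{\tt x}\,d{\tt y}$, which vanishes because $\widetilde{\bf U}_h^n\in{\mathbb W}_h^{\tt K}$ is locally divergence-free. This is exactly where the locally divergence-free discretization is indispensable.

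After this cancellation, the only surviving source contribution is the $\sqrt{\rho}\,({\bf v}-{\bf v}^*)\cdot({\bf B}-{\bf B}^*)$ piece, which I bound by the sharp inequality \eqref{eq:widelyusedIEQ}: at each inner trace it is controlled by $\vartheta_\ell$ times $q$ of that trace, with weight $\Delta t_n/\Delta x=\lambda_1/\alpha_{1,n}^{\tt LF}$ in ${\tt x}$ and analogously in ${\tt y}$. This negative amount must be absorbed by the leftover admissible weight $\lambda_1(\hat\omega_1-\lambda)/\lambda$; requiring $\lambda_1(\hat\omega_1-\lambda)/\lambda\ge\lambda_1\vartheta_1/\alpha_{1,n}^{\tt LF}$ gives $\lambda\le\hat\omega_1/(1+\vartheta_1/\alpha_{1,n}^{\tt LF})$, and taking the worse of the two directions is precisely \eqref{eq:CFL:2DMHD} with the stated $\theta$. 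Every coefficient is then nonnegative and every surviving $q$-value positive, so $q(\bar{\bf U}_{ij}^{n+1})>0$ for all ${\bf v}^*,{\bf B}^*$, whence $\bar{\bf U}_{ij}^{n+1}\in\mathcal G$ by Lemma~\ref{theo:eqDefG}. I expect the main obstacle to be the $({\bf v}^*\cdot{\bf B}^*)$-cancellation above: organizing the block jump terms and the source jumps so that their sum is recognizable as the exact quadrature of $\int_{I_{ij}}\nabla\cdot{\bf B}$, and only then invoking local divergence-freeness. The coefficient bookkeeping that pins down the $\theta\hat\omega_1$ threshold is delicate but routine once the block structure is fixed.
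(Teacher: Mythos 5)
Your proposal is correct and follows essentially the same route as the paper's proof: the same Gauss--Lobatto splitting of the cell average, Lemma \ref{lem:rho} for the density, the technical inequality \eqref{eq:MHD:LLFsplit} applied to Lax--Friedrichs building blocks, the source term split via \eqref{eq:indentityS} and \eqref{eq:widelyusedIEQ} with the $\vartheta_\ell$ pieces absorbed by the leftover $\hat\omega_1-\lambda$ weight (yielding exactly the $\theta\hat\omega_1$ threshold), and the collapse of all $({\bf v}^*\cdot{\bf B}^*)$ terms to the cell's own-trace discrete divergence, which vanishes by quadrature exactness, the divergence theorem and local divergence-freeness. The only cosmetic deviation is your mixed-sign ``inner/outer'' pairing of traces in place of the paper's matched-sign pairs ${\bf \Xi}_\pm$; since the jump terms sum to the same total either way this changes nothing, though it does require reading the hypothesis on $\alpha_{\ell,n}^{\tt LF}$ as covering all four sign combinations of $\alpha_\ell\big({\bf U}^{\pm,\mu}_{\cdot+\frac12},{\bf U}^{\pm,\mu}_{\cdot-\frac12}\big)$ rather than only the matched ones the paper actually uses.
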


\begin{remark}\label{rem:CFLresonable}
Before the proof, it is worth clarifying that 
the condition \eqref{eq:CFL:2DMHD} is close to the standard CFL condition for the PP DG schemes by Zhang and Shu \cite{zhang2010b}. To this end, we justify that the value of $\theta$ is close to one, because 
${\vartheta_\ell}/{\alpha_{\ell,n}^{\tt LF}}$, $\ell=1,2$, 
are small as supported by the following evidences. 
\begin{enumerate}
	\item For the {\em exact} solution of the system \eqref{eq:MHD}, the divergence-free condition \eqref{eq:2D:BxBy0} implies that, across every cell interface, 
	 the normal component of magnetic field is {\em always continuous}, regardless of the regularity of the solution (e.g., near shocks). 
	 This yields that the two limiting values $ (B_1)_h^n( {\tt x}_{i+1/2} ^+,{\tt y} )$ and $(B_1)_h^n( {\tt x}_{i+1/2} ^- ,{\tt y} )$ approximate  
	 the exact normal magnetic field $B_1( {\tt x}_{i+1/2},{\tt y},t_n )$. 
	 Hence the jump in normal magnetic filed, $| {\mathscr B}_{1,i+\frac12} ({\tt y}_{j}^{(\mu)}) |$, is close to the discretization error and would be very small.  
	 Similar arguments for $| {\mathscr B}_{2,j+\frac12} ({\tt x}_{i}^{(\mu)}) | $. 
	\item Note that, even in low density and strongly magnetized region, $\vartheta_1$ and $\vartheta_2$ may be large, however, 
	 the ratio ${\vartheta_\ell}/{\alpha_{\ell,n}^{\tt LF}}$, which involved in the definition of $\theta$, is usually small. 
	 In fact, 
	$|{\bf B}|/\sqrt{\rho}$ can be controlled by 
	$\alpha_{\ell,n}^{\tt LF}$ because  
		\begin{align*}
	\frac{|{\bf B}|}{\sqrt{\rho}} &\le \frac{1}{\sqrt{2}} \sqrt{ 2 \max \bigg\{ {\mathscr C}_s^2, \frac{|{\bf B}|^2}{{\rho}} \bigg\} }
	\\		& = \frac{1}{\sqrt{2}} \left[ \mathscr{C}_s^2 + \frac{ |{\bf B}|^2}{\rho} + \sqrt{ \left( \mathscr{C}_s^2 + \frac{ |{\bf B}|^2}{\rho} \right)^2 - 4 \frac{ \mathscr{C}_s^2 |{\bf B}|^2}{\rho}  }  \right]^\frac12 \le {\mathscr C}_\ell \le |v_\ell| +  {\mathscr C}_\ell. 
	\end{align*}
	\item Some numerical evidences given in Section \ref{sec:examples} (see Figs.\ \ref{fig:theta} and \ref{fig:theta2}) show that  ${\vartheta_\ell}/{\alpha_{\ell,n}^{\tt LF}}$, $\ell=1,2,$ are very small, and $\theta$ is very close to one, in the 
	 tested cases involving strong discontinuity, low density and strong magnetic field. 
\end{enumerate}
Note that our CFL condition \eqref{eq:CFL:2DMHD} is sufficient, but may be not necessary especially for those mild problems. It is certainly possible to estimate sharper CFL condition. 
\end{remark}

We are now in the position to present the proof of Theorem \ref{thm:PP:2DMHD}.

\begin{proof}
	Using the exactness of the Gauss-Lobatto quadrature rule with $\tt L$ nodes and the Gauss quadrature rule with $\tt Q$ nodes for the polynomials of degree $\tt K$,
	one can derive (cf. \cite{zhang2010b} for more details) that
	\begin{equation} \label{eq:U2Dsplit}
	\begin{split}
	\bar{\bf U}_{ij}^n
	&= \frac{\lambda_1}{\lambda}  \sum \limits_{\nu = 2}^{{\tt L}-1} \sum \limits_{\mu = 1}^{\tt Q}  \hat \omega_\nu \omega_\mu  \widetilde{\bf U}_{ij}^n\big(\hat {\tt x}_i^{(\nu)},{\tt y}_j^{(\mu)}\big) + \frac{\lambda_2}{\lambda} \sum \limits_{\nu = 2}^{{\tt L}-1} \sum \limits_{\mu = 1}^{\tt Q}  \hat \omega_\nu \omega_\mu  \widetilde{\bf U}_{ij}^n\big( {\tt x}_i^{(\mu)},\hat {\tt y}_j^{(\nu)} \big) \\
	&\quad + \frac{\lambda_1 \hat \omega_1}{\lambda} \sum \limits_{\mu = 1}^{\tt Q}  \omega_\mu \left( {\bf U}_{i-\frac{1}{2},j}^{+,\mu} +
	{\bf U}_{i+\frac{1}{2},j}^{-,\mu} \right)
	+ \frac{\lambda_2 \hat \omega_1}{\lambda} \sum \limits_{\mu = 1}^{\tt Q}  \omega_\mu \left(  {\bf U}_{i,j-\frac{1}{2}}^{\mu,+} +
	{\bf U}_{i,j+\frac{1}{2}}^{\mu,-} \right) ,
	\end{split}
	\end{equation}
	where $\hat \omega_1 = \hat \omega_{\tt L}$ is used, and $\lambda_1 = \frac{ \alpha_{1,n}^{\tt LF} \Delta t_n } { \Delta x}, \lambda_2 = \frac{ \alpha_{2,n}^{\tt LF} \Delta t_n }{\Delta y}, \lambda = \lambda_1+ \lambda_2 $. 
	After substituting \eqref{eq:U2Dsplit} into \eqref{eq:PP2DMHD:cellaverage}, we rewrite the scheme  \eqref{eq:PP2DMHD:cellaverage} by technical arrangement into the form
	\begin{align}	\label{eq:2DMHD:split:proof}
		&\bar{\bf U}_{ij}^{n+1}
		= 
		\sum \limits_{\nu = 2}^{{\tt L}-1} \hat \omega_\nu {\bf \Xi}_\nu 
		+  2	\lambda {\bf \Xi}_1 		
		+ 
		2( \hat \omega_1 - \lambda )  {\bf \Xi}_{\tt L}  -  {\bf S}_1 - {\bf S}_2 ,
	\end{align}
	where ${\bf \Xi}_1 = \frac12 \left( {\bf \Xi}_- +  {\bf \Xi}_+ \right)$, and 
	\begin{align*}
		\begin{split}
		&{\bf \Xi}_\nu
		=  
		\frac{\lambda_1}{\lambda}  \sum \limits_{\mu = 1}^{\tt Q} \omega_\mu  \widetilde{\bf U}_{ij}^n \big(\hat {\tt x}_i^{(\nu)},{\tt y}_j^\mu\big) + \frac{\lambda_2}{\lambda}
		\sum \limits_{\mu = 1}^{\tt Q} \omega_\mu  \widetilde{\bf U}_{ij}^n \big( {\tt x}_i^{(\mu)},\hat {\tt y}_j^{(\nu)} \big),\quad 
		2 \le \nu \le {\tt L}-1,
		\\[0.5mm] 
			&{\bf \Xi}_{\tt L} = 
			\frac{1}{ 2\lambda }
			\sum\limits_{\mu=1}^{\tt Q} { \omega _\mu  }
			\bigg(
			\lambda_1 \left(
			{ \bf U }_{i+\frac12,j}^{-,\mu} 
			+ { \bf U }_{i-\frac12,j}^{+,\mu}  
			\right)
			+ \lambda_2 \left(
			{ \bf U }_{i,j+\frac12}^{\mu,-} 
			+ { \bf U }_{i,j-\frac12}^{\mu,+}  
			\right)
			\bigg),
		\end{split}
		\\[0.5mm] \nonumber
		\begin{split}
			&{\bf \Xi}_\pm = 
			\frac{1}{ 2 \lambda}
			\sum\limits_{\mu=1}^{\tt Q} { \omega _\mu  }
			\left[
			\lambda_1 \left(
			{ \bf U }_{i+\frac12,j}^{\pm,\mu} -  \frac{ {\bf F}_1 ( { \bf U }_{i+\frac12,j}^{\pm,\mu}  )  }{ \alpha_{1,n}^{\tt LF} }
			+ { \bf U }_{i-\frac12,j}^{\pm,\mu}  +   \frac{ {\bf F}_1 ( { \bf U }_{i-\frac12,j}^{\pm,\mu}  )  }{ \alpha_{1,n}^{\tt LF} }
			\right)\right.
			\\
			&\qquad + \left.
			\lambda_2 \left(
			{ \bf U }_{i,j+\frac12}^{\mu,\pm} -  \frac { {\bf F}_2 (  { \bf U }_{i,j+\frac12}^{\mu,\pm}  )  } { \alpha_{2,n}^{\tt LF} }
			+ { \bf U }_{i,j-\frac12}^{\mu,\pm}  +  \frac { {\bf F}_2 (  { \bf U }_{i,j-\frac12}^{\mu,\pm} )  } { \alpha_{2,n}^{\tt LF} }
			\right)
			\right],
		\end{split}
	\\
			 &  {\bf S}_1 = 
	 \frac{\Delta t_n}{\Delta x} \sum\limits_{\mu =1}^{\tt Q}  \omega_\mu  \bigg( {\mathscr B}_{1,i+\frac12} ({\tt y}_{j}^{(\mu)}) 
	{\bf S} \big( {\bf U}_{i+\frac12,j}^{-,\mu} \big) 
	+ {\mathscr B}_{1,i-\frac12} ({\tt y}_{j}^{(\mu)})  {\bf S}  \big( {\bf U}_{i-\frac12,j}^{+,\mu} \big) 
	\bigg),
	\\
	& {\bf S}_2 =  \frac{\Delta t_n}{\Delta y} \sum\limits_{\mu =1}^{\tt Q} \omega_\mu 
	\bigg(
	{\mathscr B}_{2,j+\frac12} ({\tt x}_{i}^{(\mu)}) 
	{\bf S}  \big( {\bf U}_{i,j+\frac12}^{\mu,-} \big) 
	+ {\mathscr B}_{2,j-\frac12} ({\tt x}_{i}^{(\mu)})  {\bf S} \big( {\bf U}_{i,j-\frac12}^{\mu,+} \big) 
	\bigg).
	\end{align*}

	Using Lemma \ref{lem:rho} gives ${\bf \Xi}_k \in {\mathcal G}_\rho$, 
	i.e., the first component of ${\bf \Xi}_k$ is positive, $k=1,2,\cdots,{\tt L}$. 
	Because the first components of ${\bf S}_1$ and ${\bf S}_2$ are both zeros, 
	we know from \eqref{eq:2DMHD:split:proof} that the density $\bar \rho_{ij}^{n+1}$ is a convex combination 
	of the first components of ${\bf \Xi}_k$, $k=1,2,\cdots,{\tt L}$. Therefore, $\bar \rho_{ij}^{n+1} >0$.

	For any $ {\bf v}^*,{\bf B}^* \in {\mathbb R}^3$, we turn to show that $\bar{\bf U}_{ij}^{n+1} \cdot {\bf n}^* + \frac{|{\bf B}^*|^2}{2} >0$. 
	Note that the condition \eqref{eq:FVDGsuff} implies ${\bf \Xi}_\nu \in {\mathcal G},~2 \le \nu \le {\tt L}-1$,  
	by the convexity of $\mathcal G$. According to Lemma \ref{theo:eqDefG}, we have  
	$$
	{\bf \Xi}_\nu \cdot {\bf n}^* + \frac{|{\bf B}^*|^2}{2}  >0,\quad 2 \le \nu \le {\tt L}-1. 
	$$
	It follows from \eqref{eq:2DMHD:split:proof} that 
	\begin{align}\label{eq:22}
		\bar{\bf U}_{ij}^{n+1} \cdot {\bf n}^* + \frac{|{\bf B}^*|^2}{2}  
		& = 
		\sum \limits_{\nu = 2}^{{\tt L}-1} \hat \omega_\nu \left( {\bf \Xi}_\nu \cdot {\bf n}^* + \frac{|{\bf B}^*|^2}{2} \right)
		+ \Pi_1 + \Pi_2 
		\ge  \Pi_1 + \Pi_2,
	\end{align}
	where 
		\begin{align*}
		&\Pi_1 := 2	\lambda \left( {\bf \Xi}_1 \cdot {\bf n}^* + \frac{|{\bf B}^*|^2}{2}  \right),
		\\
		&\Pi_2 := 2( \hat \omega_1 - \lambda )  \left( {\bf \Xi}_{\tt L}  \cdot {\bf n}^* + \frac{|{\bf B}^*|^2}{2} \right) 
		- ( {\bf S}_1  +  {\bf S}_2 ) \cdot {\bf n}^*. 
	\end{align*}
	In the following, we estimate the lower bounds of $\Pi_1$ and $\Pi_2$ respectively. 
	
		Let first consider $\Pi_1$ and split it as 
	$\Pi_1=\Pi_1^- + \Pi_1^+,$ 
	where 
	\begin{equation*}
	\begin{split}
		\Pi_1^\pm  &= \lambda \left( {\bf \Xi}_\pm \cdot {\bf n}^* + \frac{ |{\bf B}^*|^2 }{2} \right)	
		\\ 
		& =			
		\sum\limits_{\mu=1}^{\tt Q} \frac{ \omega _\mu  }{ 2 }
		\left\{ 
		\lambda_1 \left[ \left(
		{ \bf U }_{i+\frac12,j}^{\pm,\mu} -  \frac{ {\bf F}_1 ( { \bf U }_{i+\frac12,j}^{\pm,\mu}  )  }{ \alpha_{1,n}^{\tt LF} }
		+ { \bf U }_{i-\frac12,j}^{\pm,\mu}  +   \frac{ {\bf F}_1 ( { \bf U }_{i-\frac12,j}^{\pm,\mu}  )  }{ \alpha_{1,n}^{\tt LF} }
		\right) \cdot {\bf n}^* +  |{\bf B}^*|^2  \right]
		\right.
		\\ 
		& \quad  + \left. 
		\lambda_2  \left[ \left(
		{ \bf U }_{i,j+\frac12}^{\mu,\pm} -  \frac { {\bf F}_2 (  { \bf U }_{i,j+\frac12}^{\mu,\pm}  )  } { \alpha_{2,n}^{\tt LF} }
		+ { \bf U }_{i,j-\frac12}^{\mu,\pm}  +  \frac { {\bf F}_2 (  { \bf U }_{i,j-\frac12}^{\mu,\pm} )  } { \alpha_{2,n}^{\tt LF} }
		\right) \cdot {\bf n}^* +  |{\bf B}^*|^2 
		\right] \right\}
		\\
		& \overset{\eqref{eq:MHD:LLFsplit}}{>} 
		\sum\limits_{\mu=1}^{\tt Q} \frac{ \omega _\mu  }{ 2 }
		\left( 
		 -\lambda_1 
		  \frac{  (B_1)_{i+\frac12,j}^{\pm,\mu} -   (B_1)_{i-\frac12,j}^{\pm,\mu}  }{ \alpha_{1,n}^{\tt LF} } 
		   -
		\lambda_2  
		  \frac {   (B_2)_{i,j+\frac12}^{\mu,\pm} -  (B_2)_{i,j-\frac12}^{\mu,\pm}   } { \alpha_{2,n}^{\tt LF} }  
		 \right) ({\bf v}^* \cdot {\bf B}^*)
		 \\
			& = - \frac{ \Delta t_n }{2}	\sum\limits_{\mu=1}^{\tt Q}  \omega _\mu  
		\left( 
		\frac{  (B_1)_{i+\frac12,j}^{\pm,\mu} -   (B_1)_{i-\frac12,j}^{\pm,\mu}  }{ \Delta x} 
		+  
		\frac {   (B_2)_{i,j+\frac12}^{\mu,\pm} -  (B_2)_{i,j-\frac12}^{\mu,\pm}   } { \Delta y }  
		\right) ({\bf v}^* \cdot {\bf B}^*).
	\end{split}
		\end{equation*}
	Thus we have 
	\begin{equation}\label{eq:ddffa}
	\Pi_1 > - \Delta t_n \big( {\rm div}_{ij} {\bf B}^n  \big) ({\bf v}^* \cdot {\bf B}^*),
	\end{equation}
	where ${\rm div}_{ij} {\bf B}^n$ is the discrete divergence (\cite{Wu2017a}) of $\widetilde{\bf B}^n_h({\tt x},{\tt y})$ defined by 
	 \begin{align*}
	 	&
	 	\mbox{\rm div} _{ij} {\bf B}^n = \frac1 {\Delta x}  {\sum\limits_{\mu=1}^{\tt Q} \omega_\mu \left(   \overline{(B_1)}_{i+\frac{1}{2},j}^{\mu} 
	 		- \overline{(B_1)}_{i-\frac{1}{2},j}^{\mu}  \right)}  + \frac1 {\Delta y} {\sum \limits_{\mu=1}^{\tt Q} \omega_\mu \left(  \overline{ ( B_2)}_{i,j+\frac{1}{2}}^{\mu} 
	 		-  \overline{ ( B_2)}_{i,j-\frac{1}{2}}^{\mu}    \right)},
	 \end{align*}
 	 with 
 	 	 $
 	 \overline{(B_1)}_{i+\frac{1}{2},j}^{\mu} = \frac12 \big( (B_1)_{i+\frac{1}{2},j}^{-,\mu} + (B_1)_{i+\frac{1}{2},j}^{+,\mu} \big)$ and  
 	 $
 	 \overline{ ( B_2) }_{i,j+\frac{1}{2}}^{\mu} = \frac12 \big( ( B_2)_{i,j+\frac{1}{2}}^{\mu,-} + ( B_2)_{i,j+\frac{1}{2}}^{\mu,+} \big).
 	 $
	
 	Let then consider $\Pi_2$. Using the inequality \eqref{eq:widelyusedIEQ} gives
{\small\begin{align*}
 {\bf S}_1 \cdot {\bf n}^* 
& \le \frac{\Delta t_n}{\Delta x} \sum\limits_{\mu =1}^{\tt Q}  \omega_\mu  
 \left[ 
\frac{ | {\mathscr B}_{1,i+\frac12} ({\tt y}_{j}^{(\mu)}) | } { \sqrt{ \rho_{i+\frac12,j}^{-,\mu} } }
 \left( {\bf U}_{i+\frac12,j}^{-,\mu} \cdot {\bf n}^* + \frac{ |{\bf B}^*|^2 }{2}  \right) 
 - {\mathscr B}_{1,i+\frac12} ({\tt y}_{j}^{(\mu)}) ( {\bf v}^* \cdot {\bf B}^* ) \right.
\\
& \quad \left. +
\frac{  |{\mathscr B}_{1,i-\frac12} ({\tt y}_{j}^{(\mu)}) | } { \sqrt{ \rho_{i-\frac12,j}^{+,\mu} } }
\left( {\bf U}_{i-\frac12,j}^{+,\mu}  \cdot {\bf n}^* + \frac{ |{\bf B}^*|^2 }{2}  \right) 
- {\mathscr B}_{1,i-\frac12} ({\tt y}_{j}^{(\mu)}) ( {\bf v}^* \cdot {\bf B}^* ) \right]
\\
& \le \frac{\Delta t_n}{\Delta x} \sum\limits_{\mu =1}^{\tt Q}  \omega_\mu  
\left[
\vartheta_1 \Pi_{\mu}^{ x} - 
\big( {\mathscr B}_{1,i+\frac12} ({\tt y}_{j}^{(\mu)})  +  {\mathscr B}_{1,i-\frac12} ({\tt y}_{j}^{(\mu)}) \big)  ( {\bf v}^* \cdot {\bf B}^* ) 
  \right],
\end{align*}}
where  
$$
\Pi_{\mu}^{ x} :=( {\bf U}_{i+\frac12,j}^{-,\mu} + {\bf U}_{i-\frac12,j}^{+,\mu}    ) \cdot {\bf n}^*
+ |{\bf B}^*|^2>0.
$$
Similarly, we have 
 \begin{align*}
	{\bf S}_2 \cdot {\bf n}^* 
 \le \frac{\Delta t_n}{\Delta y} \sum\limits_{\mu =1}^{\tt Q}  \omega_\mu  
	\left[
	\vartheta_2 
	\Pi_{\mu}^{ y}  -  \big(  {\mathscr B}_{2,j+\frac12} ({\tt x}_{i}^{(\mu)})  +  {\mathscr B}_{2,j-\frac12} ({\tt x}_{i}^{(\mu)}) \big)  ( {\bf v}^* \cdot {\bf B}^* ) 
	\right],
\end{align*} 	
 where  
 $$
 \Pi_{\mu}^{ y} := ( {\bf U}_{i,j+\frac12}^{\mu,-} + {\bf U}_{i,j-\frac12}^{\mu,+}    ) \cdot {\bf n}^*
 + |{\bf B}^*|^2 > 0.
 $$
 Therefore, 
   \begin{align}\label{eq:dfaasdds}
 	({\bf S}_1 + {\bf S}_2) \cdot {\bf n}^* \le \Pi_{s1} + \Pi_{s2} - \Delta t_n {\Pi}_{s3}  ( {\bf v}^* \cdot {\bf B}^* ),  
\end{align} 
where 
$$
\Pi_{s1} = \vartheta_1 \frac{\Delta t_n}{\Delta x} \sum\limits_{\mu =1}^{\tt Q}  \omega_\mu  
 \Pi_{\mu}^{ x}, \qquad \Pi_{s2} = \vartheta_2 \frac{\Delta t_n}{\Delta y} \sum\limits_{\mu =1}^{\tt Q}  \omega_\mu  
 \Pi_{\mu}^{ y},
$$ 
and
\begin{align*}
	{\Pi}_{s3} &=  \sum\limits_{\mu =1}^{\tt Q}  \omega_\mu  
	\Bigg(  \frac{ {\mathscr B}_{1,i+\frac12} ({\tt y}_{j}^{(\mu)})    }{\Delta x} +  
	  \frac{{\mathscr B}_{1,i-\frac12} ({\tt y}_{j}^{(\mu)})    }{\Delta x} + 
	\frac{  {\mathscr B}_{2,j+\frac12} ({\tt x}_{i}^{(\mu)})    }{\Delta y} 
	+ 
	\frac{    {\mathscr B}_{2,j-\frac12} ({\tt x}_{i}^{(\mu)})  }{\Delta y} \Bigg).
\end{align*}
Note that \eqref{eq:CFL:2DMHD} implies $\lambda \le \theta \hat \omega_1$, thus  
$$
\vartheta_\ell \le  \alpha_{\ell ,n}^{\tt LF} \max \left\{ {\vartheta_1}/{\alpha_{1,n}^{\tt LF}}, {\vartheta_2}/{\alpha_{2,n}^{\tt LF}}  \right\} 
= \alpha_{\ell,n}^{\tt LF} (\theta^{-1}-1) \le \alpha_{\ell,n}^{\tt LF} ( \hat \omega_1 \lambda^{-1} - 1 ), \quad \ell=1,2. 
$$
It follows that 
\begin{equation}\label{eq:dfaasdds3}
\Pi_{s1} \le ( \hat \omega_1 \lambda^{-1} - 1 ) \lambda_1 \sum\limits_{\mu =1}^{\tt Q}  \omega_\mu  
\Pi_{\mu}^{ x},\quad \Pi_{s2} \le ( \hat \omega_1 \lambda^{-1} - 1 ) \lambda_2 \sum\limits_{\mu =1}^{\tt Q}  \omega_\mu  
\Pi_{\mu}^{ y}.
\end{equation}
Combining \eqref{eq:dfaasdds} and \eqref{eq:dfaasdds3}, we obtain the estimate for $\Pi_2$
\begin{align*}
	\Pi_2  & \overset{ \eqref{eq:dfaasdds} }{\ge}  
	2( \hat \omega_1 - \lambda )  \left( {\bf \Xi}_{\tt L}  \cdot {\bf n}^* + \frac{|{\bf B}^*|^2}{2} \right) - 
	(\Pi_{s1}+\Pi_{s2}) +  \Delta t_n {\Pi}_{s3}  ( {\bf v}^* \cdot {\bf B}^* )
	\\
	\begin{split}
	& \overset{ \eqref{eq:dfaasdds3} }{\ge} 
	2( \hat \omega_1 - \lambda )  \left( {\bf \Xi}_{\tt L}  \cdot {\bf n}^* + \frac{|{\bf B}^*|^2}{2} \right) - 
	( \hat \omega_1 \lambda^{-1} - 1 ) 
	\\
	& \qquad \times 
	 \sum\limits_{\mu =1}^{\tt Q} \big( \lambda_1 \omega_\mu  
	\Pi_{\mu}^{ x} + \lambda_2 \omega_\mu  
	\Pi_{\mu}^{ y} \big) +  \Delta t_n {\Pi}_{s3}  ( {\bf v}^* \cdot {\bf B}^* ) 
	\end{split}
	\\
	& =\Delta t_n {\Pi}_{s3}  ( {\bf v}^* \cdot {\bf B}^* ).
\end{align*}
Note that $\Pi_{s3}$ can be rewritten as 
\begin{align*}
	{\Pi}_{s3} 
	& 
	= \sum\limits_{\mu =1}^{\tt Q}  \omega_\mu  
	\Bigg( 
	\frac{ \overline{(B_1)}_{i+\frac12,j}^\mu - (B_1)_{i+\frac12,j}^{-,\mu}   }{  \Delta x }
	+ 
	\frac{   (B_1)_{i-\frac12,j}^{+,\mu} - \overline{(B_1)}_{i-\frac12,j}^\mu   }{  \Delta x }
	+ \\
	& \quad  
	\frac{ \overline{(B_2)}_{i,j+\frac12}^\mu - (B_2)_{i,j+\frac12}^{\mu,-}   }{  \Delta y }
	+ 
	\frac{   (B_2)_{i,j-\frac12}^{\mu,+} - \overline{(B_2)}_{i,j-\frac12}^\mu   }{  \Delta y }
	\Bigg)
	 = {\rm div}_{ij} {\bf B}^n  - {\rm div}_{ij}^{\tt in} {\bf B}^n,
\end{align*}
where  
\begin{equation*}{\small
\begin{aligned}	
 {\rm div}_{ij}^{\tt in} {\bf B}^n & =  
\frac1 {\Delta x}  {\sum\limits_{\mu=1}^{\tt Q} \omega_\mu \left(   {(B_1)}_{i+\frac{1}{2},j}^{-,\mu} 
	- {(B_1)}_{i-\frac{1}{2},j}^{+,\mu}  \right)}  + \frac1 {\Delta y} {\sum \limits_{\mu=1}^{\tt Q} \omega_\mu \left(  { ( B_2)}_{i,j+\frac{1}{2}}^{\mu,-} 
	-  { ( B_2)}_{i,j-\frac{1}{2}}^{\mu,+}    \right)}	
\\
& = \frac{1}{\Delta x} \int_{ {\tt y}_{j-\frac12} }^{ {\tt y}_{j+\frac12} }  
\Big( (\widetilde{B_1})_{ij}^n ({\tt x},{\tt y})\Big|_{ {\tt x} = {\tt x}_{i-\frac12} }^{ 
 {\tt x}= {\tt x}_{i+\frac12}  } \Big)
 d {\tt y} 
 + 
 \frac{1}{\Delta y} \int_{ {\tt x}_{i-\frac12} }^{ {\tt x}_{i+\frac12} } 
 \Big( (\widetilde{B_2})_{ij}^n ({\tt x},{\tt y})\Big|_{ {\tt y} = {\tt y}_{j-\frac12} }^{ 
 	{\tt y}= {\tt y}_{j+\frac12}  } \Big)
 d {\tt x} 
 \\
 & = \frac{1}{\Delta x \Delta y} \int_{I_{ij}}
 \nabla \cdot \widetilde{\bf B}_{ij}^n ( {\tt x},{\tt y} ) d {\tt x} d {\tt y} = 0.
 \end{aligned}}
\end{equation*}
In the above identity we have used 
the exactness of $\tt Q$-point Gauss quadrature rule for the polynomials of degree $\tt K$, 
the divergence theorem and the locally divergence-free property of the polynomial vector $\widetilde{\bf B}_{ij}^n( {\tt x},{\tt y} )$. 
Therefore, we obtain 
\begin{equation}\label{key111}
\Pi_2 \ge \Delta t_n  ({\rm div}_{ij} {\bf B}^n)  ( {\bf v}^* \cdot {\bf B}^* ).
\end{equation} 

Substituting the estimates \eqref{eq:ddffa} and \eqref{key111} into the inequality \eqref{eq:22} gives 
$$
\bar{\bf U}_{ij}^{n+1} \cdot {\bf n}^* + \frac{|{\bf B}^*|^2}{2}  > 0,\quad \forall {\bf v}^*,{\bf B}^*\in {\mathbb R}^3,
$$
which along with $\bar\rho_{ij}^{n+1}>0$ imply $\bar{\bf U}_{ij}^{n+1} \in {\mathcal G}$ by Lemma \ref{theo:eqDefG}. The proof is completed. 
\end{proof}

\begin{remark}
There are two features 
in our PP schemes: the locally divergence-free spatial discretization and the penalty-type terms discretized from the GP source term. 
The former leads to zero divergence within each cell, while the latter 
controls the divergence-error across the cell interfaces.  
The proof of Theorem \ref{thm:PP:2DMHD} shows 
 that, thanks to these two features, the PP property is obtained without requiring the discrete divergence-free condition in \cite{Wu2017a}, 
which was proposed for the conservative schemes without penalty-type terms. 	
\end{remark}

\begin{remark}
Theorem \ref{thm:PP:2DMHD} still holds if the condition \eqref{eq:FVDGsuff} is replaced with
\begin{equation}\label{eq:2Dcondition2}
\begin{cases}
{\bf U}_{i+\frac{1}{2},j}^{\pm,\mu},~{\bf U}_{i,j+\frac{1}{2}}^{\mu,\pm} \in {\mathcal G},\quad \forall~i,j,\mu,
\\ 
\frac{ \bar{\bf U}_{ij}^n - \hat \omega_1\sum \limits_{\mu = 1}^{\tt Q}  \frac{ \omega_\mu }{ {\lambda}}  \left(   \lambda_1 \big( {\bf U}_{i-\frac{1}{2},j}^{+,\mu} +
	{\bf U}_{i+\frac{1}{2},j}^{-,\mu}  \big) + \lambda_2 \big( {\bf U}_{i,j-\frac{1}{2}}^{\mu,+} +
	{\bf U}_{i,j+\frac{1}{2}}^{\mu,-} \big)   \right) }{1-2\hat \omega_1} \in {\mathcal G},\quad \forall~i,j .
\end{cases}
\end{equation}
In other words, \eqref{eq:2Dcondition2} gives 
a sufficient condition for preserving positivity in those high-order finite volume methods \eqref{eq:PP2DMHD:cellaverage} that only reconstruct 
 limiting values ${\bf U}_{i+\frac{1}{2},j}^{\pm,\mu}, {\bf U}_{i,j+\frac{1}{2}}^{\mu,\pm}$ instead of polynomials
$\{{\bf U}_{ij}^n({\tt x},{\tt y})\}$. 
The PP limiter in \cite{cheng} can also be revised to meet the 
condition \eqref{eq:2Dcondition2}, see e.g., \cite{zhang2011b}. 
\end{remark}

\begin{remark}
	All the above analyses are focused on the first-order Euler forward time discretization. One can also use strong stability
	preserving (SSP) high-order time discretizations (see e.g., \cite{Gottlieb2009})
	to solve the ODE system $\frac{d}{dt}{\bf U}_h = {\bf L} ({\bf U}_h)$. 
	For example, the third order SSP Runge-Kutta method reads 
	\begin{equation}\label{eq:SSP3}
	\begin{split}
	& {\bf U}_h^{*} = \widetilde {\bf U}_h^{n}+\Delta t_n {\bf L} ( \widetilde {\bf U}_h^{n} ),
	\\
	& {\bf U}_h^{**} = \frac34 \widetilde {\bf U}_h^{n}
	+ \frac14 \Big(  \widetilde {\bf U}_h^{*}+\Delta t_n {\bf L} ( \widetilde {\bf U}_h^{*} ) \Big),
	\\
	&{\bf U}_h^{n+1} = \frac13 \widetilde {\bf U}_h^{n}
	+ \frac23 \Big(  \widetilde {\bf U}_h^{**}+\Delta t_n {\bf L} ( \widetilde {\bf U}_h^{**} ) \Big),
	\end{split}
	\end{equation}
	where the numerical solutions with ``$\sim$'' at above denote the PP limited 
	solutions.  
	Since a SSP method is a convex combination of the Euler forward method, the PP property of the full scheme also holds 
	according to the convexity of $\mathcal G$.
\end{remark}

\section{Numerical Experiments}\label{sec:examples}

This section conducts numerical experiments on several 2D challenging 
 MHD problems  with either strong discontinuities, low plasma-beta $\beta=2p/|{\bf B}|^2$, or low density or pressure, to further demonstrate our theoretical analysis, as well as  
the accuracy, high-resolution and robustness of the proposed PP DG methods.  
Without loss of generality, we focus on the proposed PP third-order (${\mathbb P}^2$-based) DG methods with the third-order SSP Runge-Kutta time discretization \eqref{eq:SSP3}. Unless otherwise stated, all 
the computations are restricted to the EOS \eqref{eq:EOS} with the adiabatic index $\gamma=\frac53$, 
and the CFL number is set as 0.15.

\begin{expl}[Smooth problems]\label{example:accuracy} We first test two smooth problems to check the accuracy of our
scheme. The first problem is similar to the one in \cite{zhang2010b}. 
The exact solution of this problem is 
$$
(\rho,{\bf v},p,{\bf B})({\tt x},{\tt y},t) 
= \big( 1+0.99\sin({\tt x}+{\tt y}-2t),~1,~1,~0,~1,~0.1,~0.1,~0\big), 
$$
which describes a MHD sine wave propagating with $\gamma=1.4$ and low density. 
Table \ref{tab:acc1} lists the numerical errors at $t=0.1$ 
in the numerical density and the corresponding convergence rates 
for the PP third-order DG method at different grid resolutions. 
The results show that the expected convergence order is achieved.

\begin{table}[htbp]\label{tab:acc1}
	\centering
	\caption{\small First problem of Example \ref{example:accuracy}:
		Numerical errors at $t=0.1$ in the density and corresponding convergence rates for
		the PP third-order DG method at
		different grid resolutions.
	}
	\label{tab:order}
	\begin{tabular}{c|c|c|c|c|c|c}
		\hline
		~Mesh~ 
		&~$l^1$-error~&~order~&~$l^2$-error~&~order~&$l^\infty$-error&~order~ \\
		\hline
		$15 \times 15$ &  3.45e-2 & --          & 7.05e-3 & -- & 6.08e-3 & -- \\
		$30 \times 30$ &  4.79e-3 & 2.85   & 1.01e-3&  2.81 & 9.15e-4 & 2.73 \\
		$60 \times 60$ &  6.80e-4 & 2.82    & 1.50e-4& 2.75 & 1.38e-4 & 2.73\\
		$120 \times 120$ & 9.19e-5 & 2.89    & 2.07e-5& 2.85 & 1.91e-5 & 2.86 \\
		$240 \times 240$ & 1.16e-5 & 2.99 &  2.64e-6& 2.97 & 2.44e-6 & 2.97 \\
		$480 \times 480$ &  1.45e-6 & 3.00  &3.31e-7& 3.00 & 3.06e-7 & 3.00\\
		\hline
	\end{tabular}
\end{table}

The second problem is the smooth vortex problem \cite{Christlieb} 
with nonzero magnetic field and extremely low pressure. The initial condition is a mean flow 
$$(\rho,{\bf v},p,{\bf B})({\tt x},{\tt y},0) 
=(1, 1, 1, 0, 1, 0, 0, 0),
$$
with vortex perturbations on $v_1, v_2, B_1, B_2,$ and $p$: 
\begin{gather*}
(\delta v_1, \delta v_2)=\frac{\mu}{ \sqrt{2} \pi} {\rm e}^{0.5(1-r^2)} (-{\tt y},{\tt x}),
\quad 
(\delta B_1,\delta B_2) = \frac{\mu}{2\pi} {\rm e}^{0.5(1-r^2)} (-{\tt y},{\tt x}),
\\
\delta p = -\frac{ \mu^2 (1+r^2)  }{8 \pi^2} {\rm e}^{1-r^2},
\end{gather*}
where $r=\sqrt{{\tt x}^2+{\tt y}^2}$. 
The computational domain is taken as $[-10,10]^2$, and periodic boundary conditions are used. We set 
the vortex strength 
$\mu=5.389489439$ 
such that the lowest pressure in the center of the vortex is about $5.3 \times 10^{-12}$. 
As a result,  
 our DG method does not work without performing the PP limiting procedure, 
 as the condition \eqref{eq:FVDGsuff} is not met automatically. 
The $l^1$-errors of the magnetic field and 
 the velocity at $t = 0.05$ are
shown in Table \ref{tab:acc2}, 
where one can observe that the proposed PP DG scheme
can maintain third-order accuracy as expected.

\begin{table}[htbp]\label{tab:acc2}
	\centering
	\caption{\small Second problem of Example \ref{example:accuracy}: 
		$l^1$-errors at $t=0.05$ in $B_1,B_2,v_1,v_2$, and corresponding convergence rates for
		the PP third-order DG method at
		different grid resolutions.
	}
	\label{tab:order2}
	\begin{tabular}{c|c|c|c|c|c|c|c|c}
		\hline
		\multirow{2}{18pt}{Mesh}
		&\multicolumn{2}{c|}{$B_1$}&\multicolumn{2}{c|}{$B_2$}
		&\multicolumn{2}{c|}{$v_1$} &\multicolumn{2}{c}{$v_2$} 
		\\
		\cline{2-9}
		& error& order & error & order & error& order & error & order \\
		\hline
		$10\times 10$& 1.29e0& --          &1.29e0& --   &  1.85e0 & -- & 1.84e0  &-- \\
		$20\times 20$& 2.89e-1& 2.16   & 2.84e-1&    2.19 & 4.09e-1 & 2.18 & 4.06e-1 & 2.18\\
		$40\times 40$& 4.15e-2& 2.80   & 4.08e-2&  2.80 & 5.86e-2 & 2.80  & 5.87e-2 & 2.79 \\
		$80\times 80$& 4.36e-3 &  3.25    & 4.23e-3&  3.27 & 6.26e-3 & 3.23 & 6.20e-3& 3.24\\
		$160\times 160$& 6.19e-4 &  2.82 & 6.21e-4&    2.77 & 9.06e-4 & 2.79 & 9.09e-4 & 2.77\\
		\hline
	\end{tabular}
\end{table}

\end{expl}

To verify the capability of the proposed PP DG methods in resolving complex wave configurations, we will simulate  
a shock cloud interaction problem, a rotated shock tube problem, two blast problems  
and several astrophysical jets. For these problems, before the PP limiting procedure, the WENO limiter \cite{Qiu2005} with locally divergence-free reconstruction (\cite{ZhaoTang2017}) is implemented with the aid of the local characteristic decomposition 
to enhance the numerical stability of high-oder DG methods in resolving the strong discontinuities and their interactions. The WENO limiter is only used 
in the ``trouble'' cells adaptively detected by the indicator in \cite{Krivodonova}.

\begin{expl}[Shock cloud interaction]\label{example:sc}This problem, introduced in \cite{Dai1998}, describes the disruption of a high density cloud by a strong shock wave. 
	It is widely simulated in the literature, e.g., \cite{Toth2000,Balbas2006}. 
	Our setup is the same as that in \cite{Toth2000,Balbas2006}.  
The computational domain is $[0,1]^2$ with the right boundary specified as 
supersonic inflow condition and the others as outflow conditions. 
Initially, 
there is a 
discontinuity parallel to the $\tt y$-axis 
at 
${\tt x}=0.6$ 
with the left and right states 
$$(\rho,{\bf v},p,{\bf B})=
\begin{cases}
(3.86859,0,0,0,167.345,0,2.1826182,-2.1826182),\quad &{\tt x}<0.6,
\\
(1,-11.2536,0,0,1,0,0.56418958,0.56418958),\quad &{\tt x}>0.6.
\end{cases}$$ 
The discontinuity is a combination of a fast shock wave and a rotational discontinuity in $B_3$. 
There is a circular cloud centered at $(0.8,0.5)$ with radius 0.15. 
The cloud has the same states as the surrounding plasma except for a higher density 10. 

Fig.\ \ref{fig:sc} displays the schlieren image of the density 
as well as the magnetic field lines obtained by using the PP third-order DG 
method on the uniform mesh of $400\times 400$ cells. 
One can see that the discontinuities and complex flow structures 
are captured with high resolution, and the results agree well with 
those in \cite{Toth2000,Balbas2006}. We also observe that, 
the condition \eqref{eq:FVDGsuff} should be enforced by the 
the PP limiter, otherwise the high-order DG code breaks down 
at time $t \approx 0.03674$.  
	
	\begin{figure}[htbp]
	  \centering
	  {\includegraphics[width=0.98\textwidth]{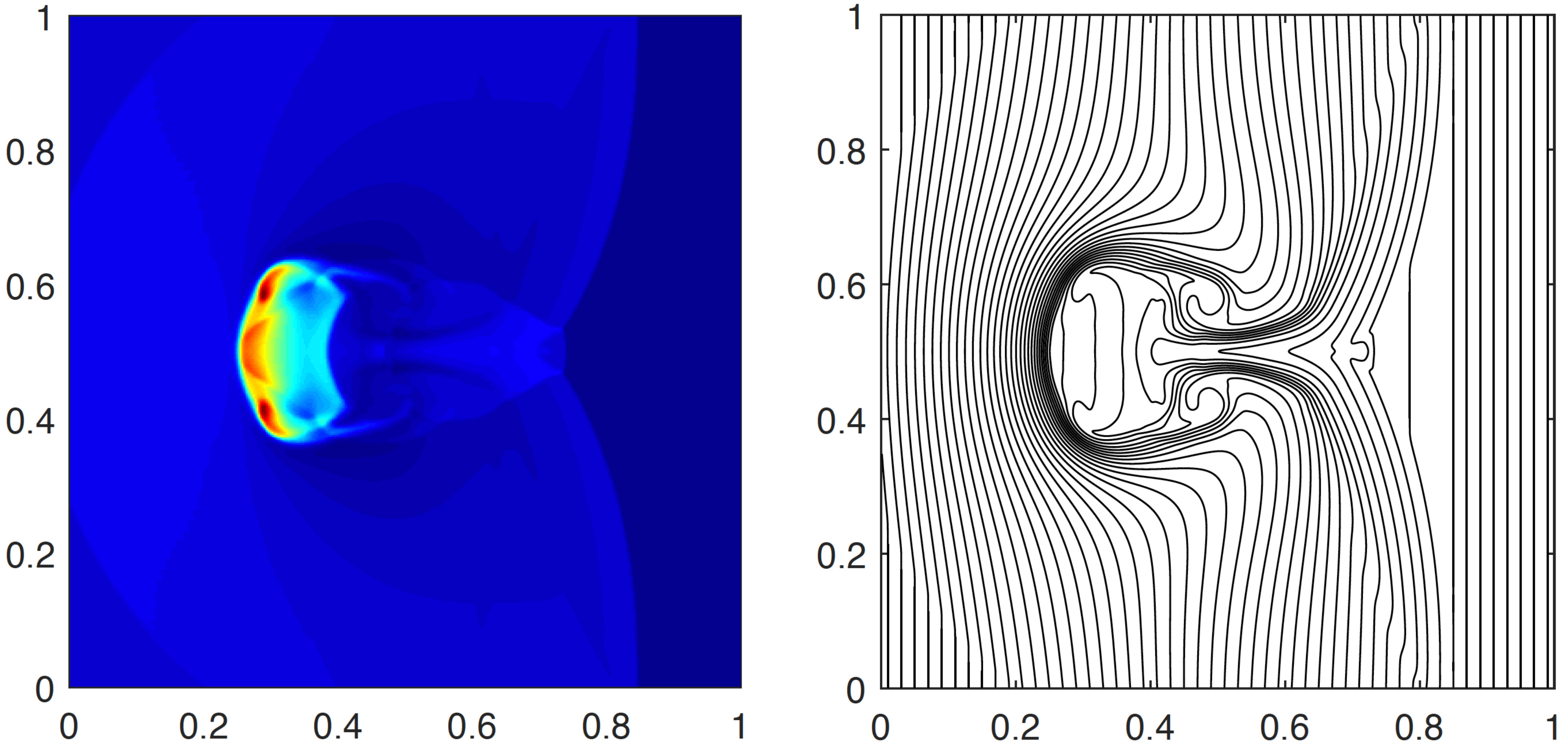}}
	  \caption{\small Example \ref{example:sc}: 
	  	the schlieren image of density (left), and the magnetic field lines (right) at $t=0.06$.} 
	  \label{fig:sc}
	\end{figure}

\end{expl}

\begin{expl}[Rotated shock tube problem]\label{ex:rst}This is a rotated Riemann problem \cite{Ryu1998} with the left and right
	states respectively given by $(\rho,v_{\parallel},v_{\perp},u_3,p,B_{\parallel},B_{\perp},B_3)$ $=(1,10,0,0,20,5/\sqrt{4\pi},5/\sqrt{4\pi},0)$ and $(1,-10,0,0,1,5/\sqrt{4\pi},5/\sqrt{4\pi},0)$. 
	The initial discontinuity is oblique to the Cartesian mesh and at an angle of $
	\varphi = 45^\circ $ to the $\tt x$-axis. Similar to 
	\cite{Toth2000,Chandrashekar2016,LiuShuZhang2017}, 
	the computational domain is taken as $[0,1]\times [0,2/N]$, 
	and divided into a Cartesian mesh with $N\times 2$ square cells. 
	The left and right boundaries are fixed according to the initial condition, 
	and we stop the computation at $t=0.08\cos(\alpha)$ before the fast shocks reach 
	the left and right boundaries. The shifted periodic type boundary conditions are used on the top and bottom of the domain as explained in \cite{Toth2000}. 
	We set $N=512$ and plot the numerical solution at the first row ($j=1$) of the physical mesh in Fig.\ \ref{fig:rst}. 
	For comparisons, the non-rotated 1D solution 
	on a fine mesh of 10000 cells is also displayed.  
	Similar to the nonconservative eight-wave type schemes in \cite{Toth2000,Chandrashekar2016,LiuShuZhang2017}, 
	the proposed DG method also has the problem that the parallel component of the magnetic field, 
	$B_{\parallel}$, which should be constant, shows a large error due to the nonconservative formulation. 
	In our result, the $l^\infty$-norm of this error is about 0.0176, 
	which is much less than that (about 0.2) in \cite{Chandrashekar2016} 
	obtained by a second-order finite volume scheme, and that (about 0.13) in \cite{LiuShuZhang2017} by a third-order DG scheme. The other quantities have good behavior in comparison with 
	the reference solution and the results in \cite{Ryu1998}.

	\begin{figure}[htbp]
	\centering
	{\includegraphics[width=0.49\textwidth]{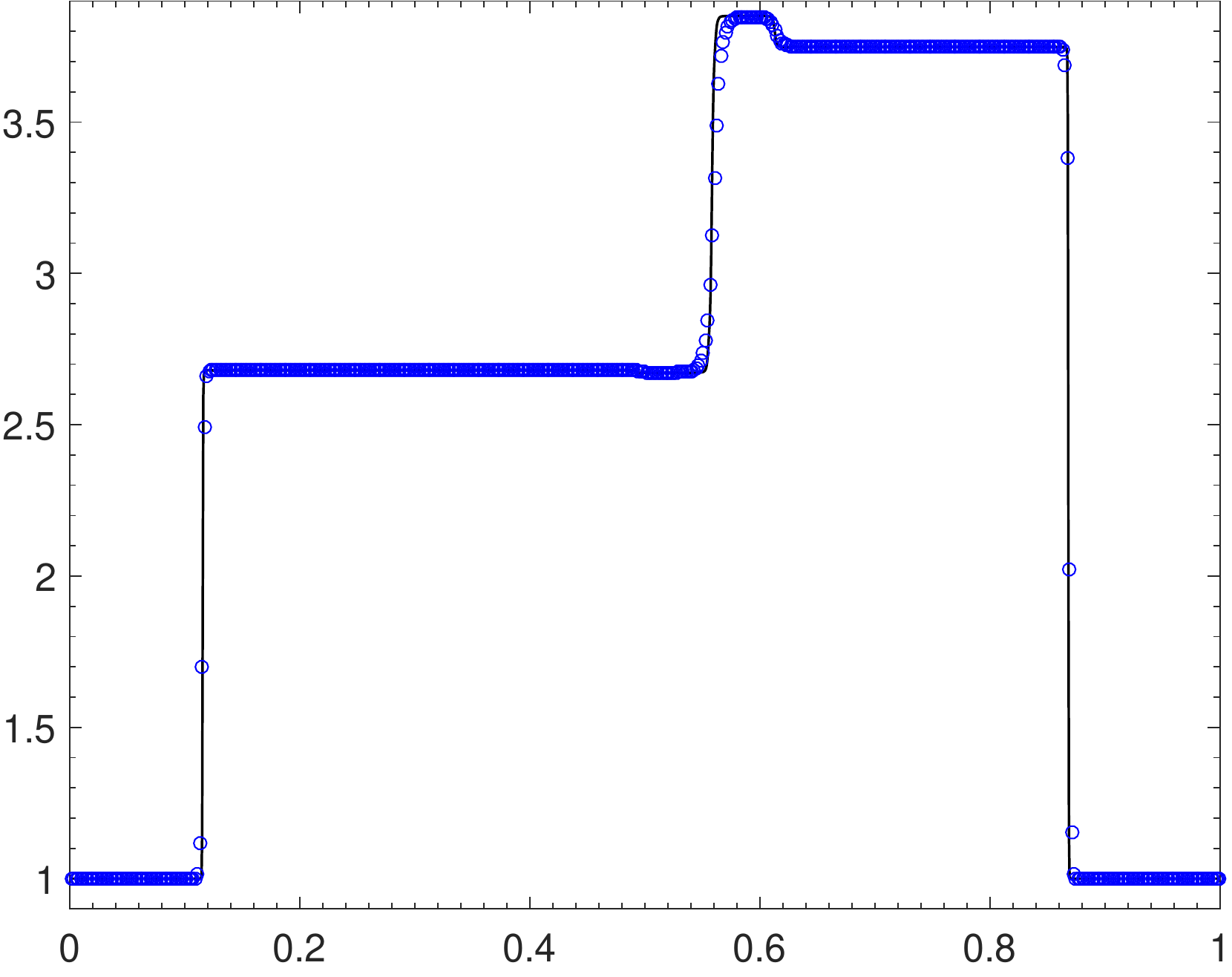}}
	{\includegraphics[width=0.49\textwidth]{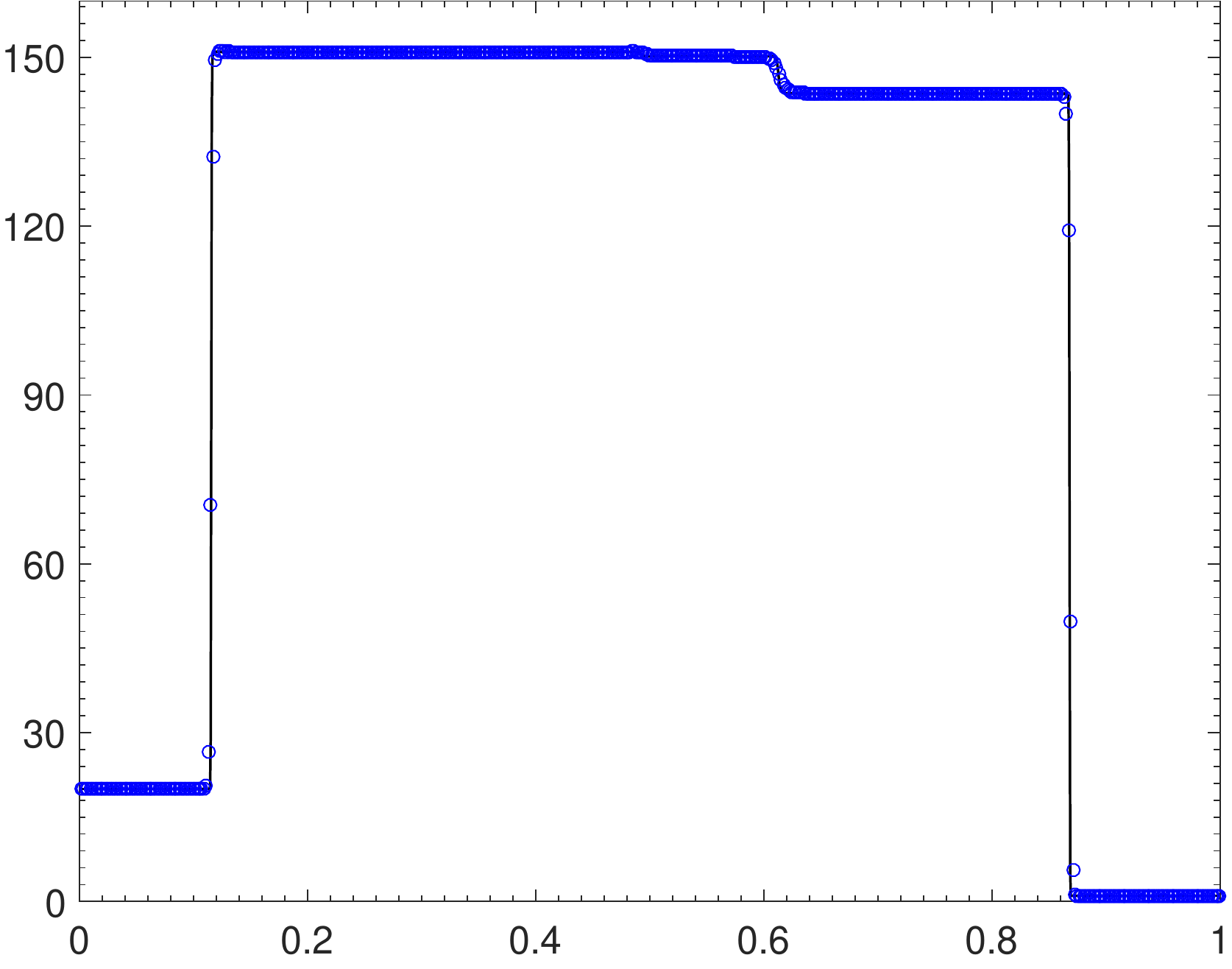}}
	{\includegraphics[width=0.49\textwidth]{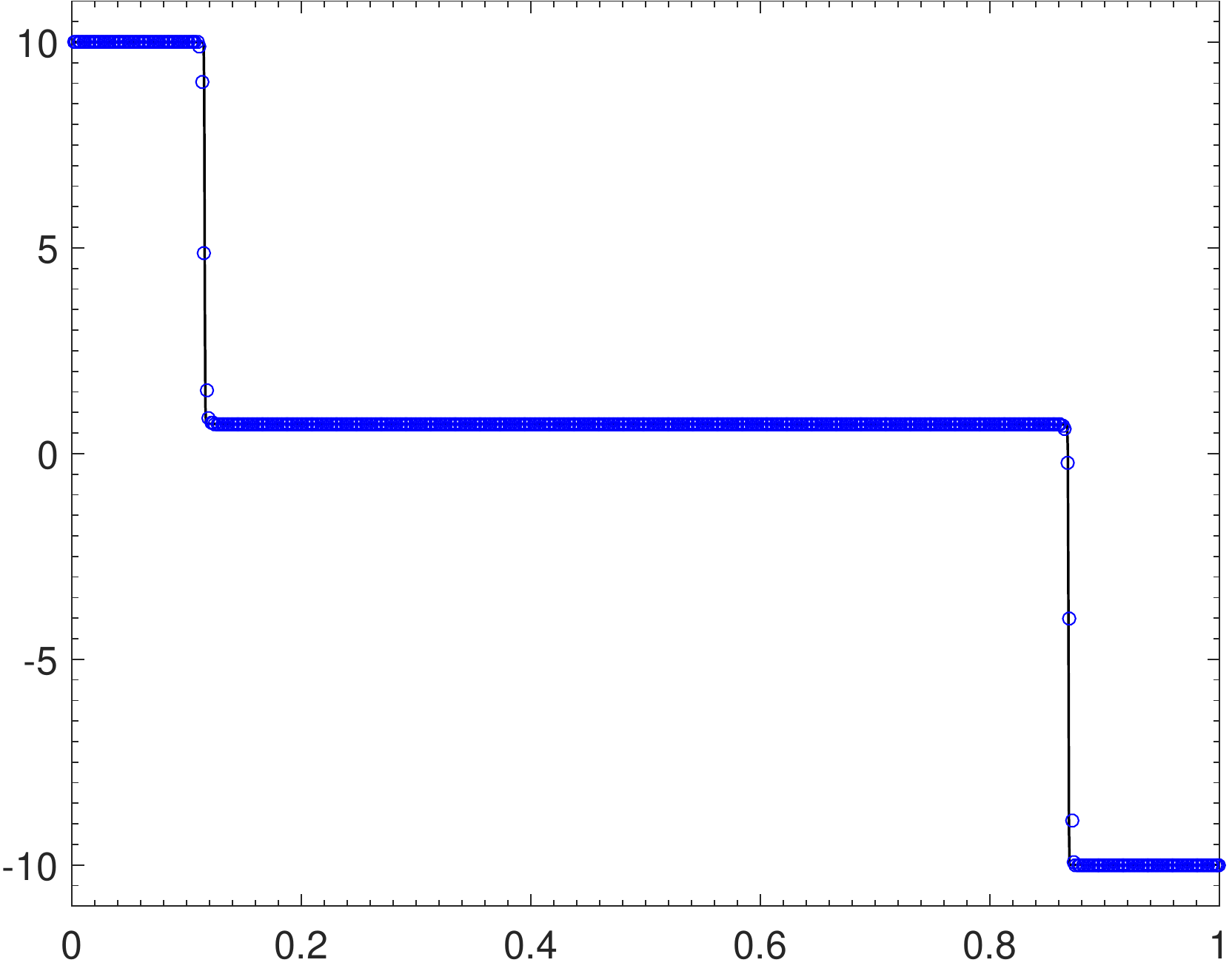}}
	{\includegraphics[width=0.49\textwidth]{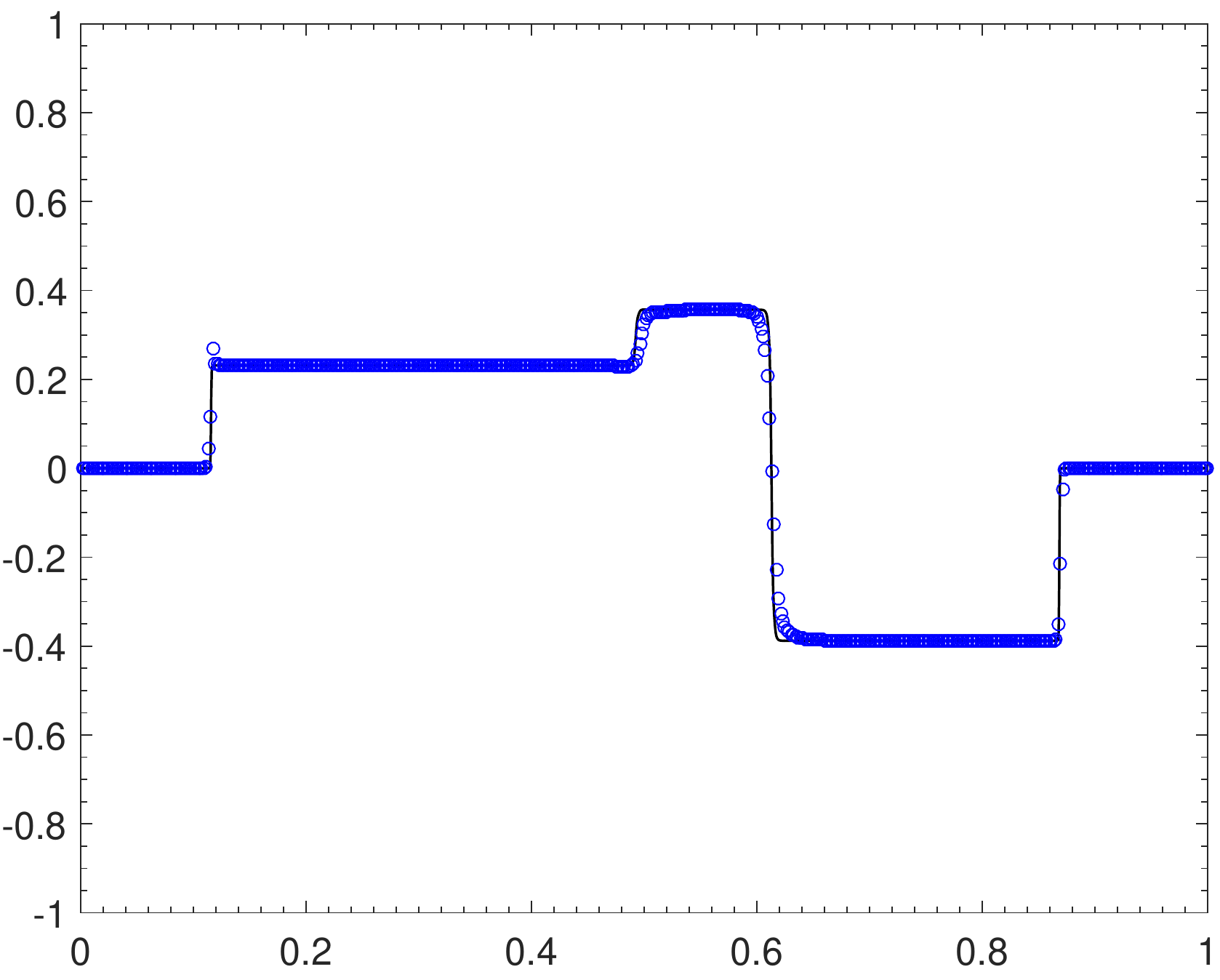}}
	{\includegraphics[width=0.49\textwidth]{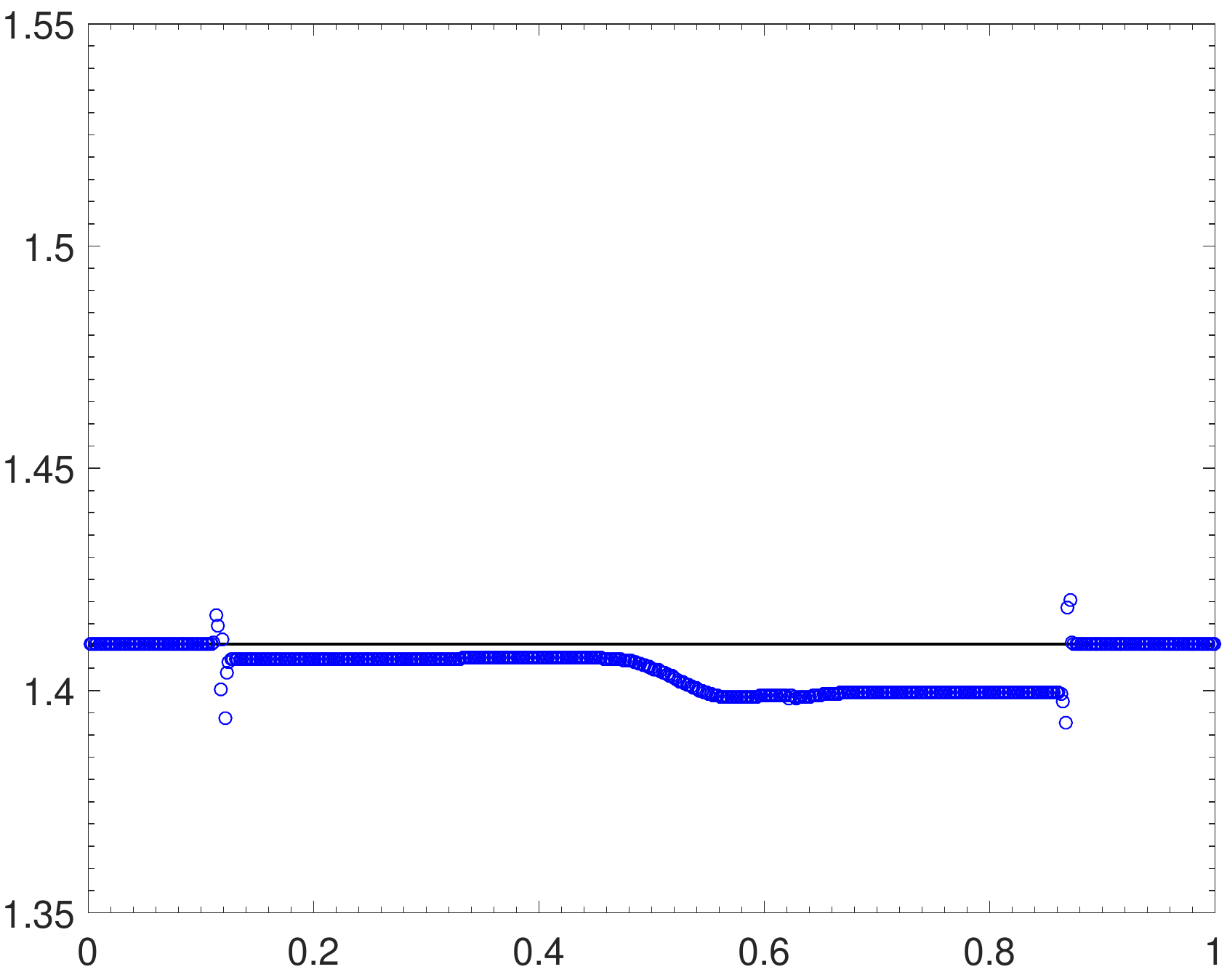}}
	{\includegraphics[width=0.49\textwidth]{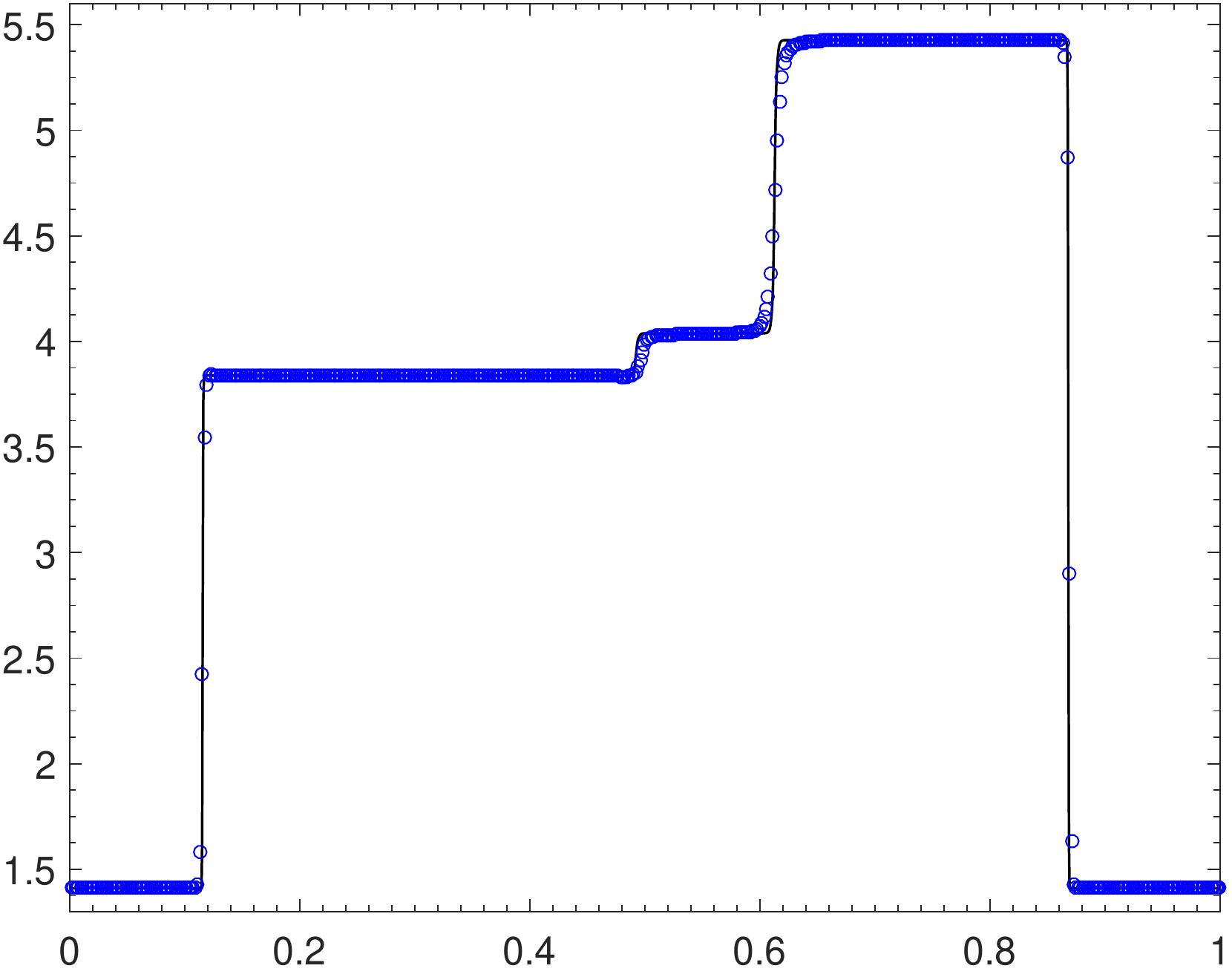}}
	\caption{\small Example \ref{ex:rst}: 
		Numerical solution of the 2D rotated shock tube problem 
		obtained by the PP third-order DG method {\rm(``$\circ$'')}. 
		For reference, the non-rotated 1D solution with 10000 cells is also plotted in solid lines. Top left: $\rho$; 
	top right: $p$; middle left: $v_{\parallel}$; middle right: $v_{\perp}$; bottom left: $B_{\parallel}$; bottom right: $B_{\perp}$.}   		
	\label{fig:rst}
	\end{figure}

\end{expl}

\begin{expl}[Blast problems]\label{ex:BL}MHD blast problem was first introduced by Balsara and Spicer in \cite{BalsaraSpicer1999}, 
	and has become a standard test for 2D MHD codes. 
	It describes the propagation of a
	circular strong fast magneto-sonic shock formulates and propagates into 
	the  ambient
	plasma with low plasma-beta ($\beta=2p/|{\bf B}|^2$). 
	As $\beta$ is set lower, negative pressure is more likely to be produced in the 
	numerical simulation and this problem becomes more challenging. 
	Therefore, it is often used to 
	check the robustness and PP property of MHD schemes, see e.g., \cite{cheng,Christlieb}. 
	Initially, the computational domain $[-0.5,0.5]^2$ 
	is filled with plasma at rest with the unit density and adiabatic index $\gamma=1.4$. The explosion zone 
	$(r<0.1)$ is with a pressure of $p_e$, and the ambient medium $(r>0.1)$ has a lower pressure of $p_a$, where $r=\sqrt{{\tt x}^2+{\tt y}^2}$. We initialize the magnetic field in the $\tt x$-direction as $B_a$.  
	
	We first consider the same setup as in \cite{BalsaraSpicer1999,cheng}, and take  
	$p_e=10^3$, $p_a=0.1$ and $B_a=100/\sqrt{4\pi}$. 
	The corresponding plasma-beta is very small and about $2.51\times 10^{-4}$.  
	Fig.\ \ref{fig:BL1} shows the contour plots of density, pressure, velocity and 
	magnetic pressure at $t=0.01$ computed by the PP third-order DG method with $320\times320$ 
	uniform cells. 
	We can see that 
	the outermost discontinuity in this expanding shell is a fast-shock which is only weakly compressive and
	energetically is dominated by the magnetic field. 
	The density
	image clearly shows two dense shells which propagate parallel to the magnetic field. The outer wave of
	these shells is a slow-shock, and the inner is a contact discontinuity 
	evolved from the initial interface which separates the initially hot, interior gas from the surrounding cool ambient medium \cite{Gardiner2005}. 
	Our results are highly in agreement with those displayed  
	in \cite{BalsaraSpicer1999,Li2011,Christlieb}, 
	and the density profile is well captured with much less oscillations than those shown in \cite{BalsaraSpicer1999,Christlieb}. 
	It is worth mentioning that 
	the third-order DG method fails at $t \approx 2.85\times 10^{-4}$ 
	if the PP limiting procedure is not employed to enforce the condition \eqref{eq:FVDGsuff}. 
	
	\begin{figure}[htbp]
	\centering
	{\includegraphics[width=0.49\textwidth]{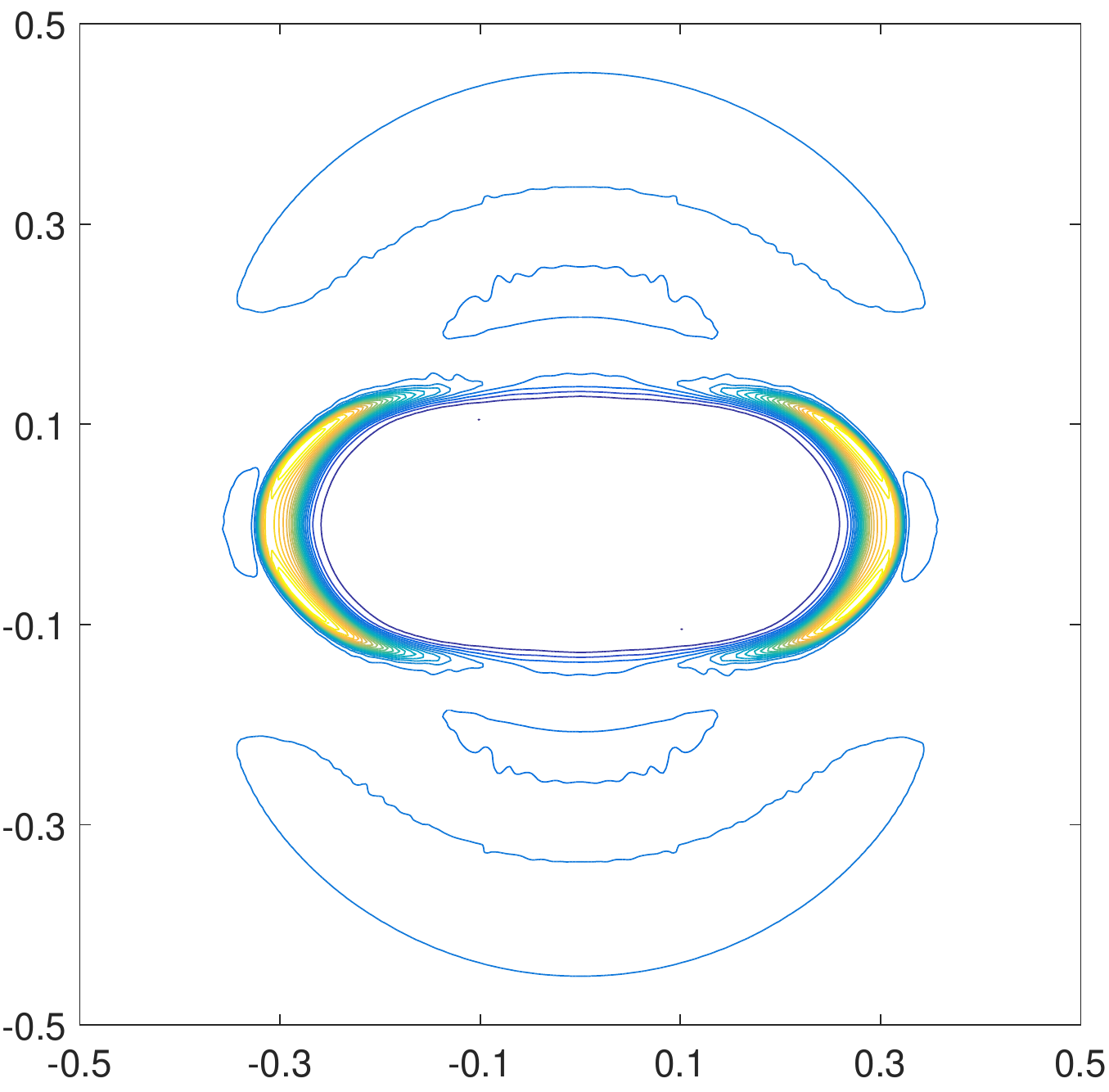}}
	{\includegraphics[width=0.49\textwidth]{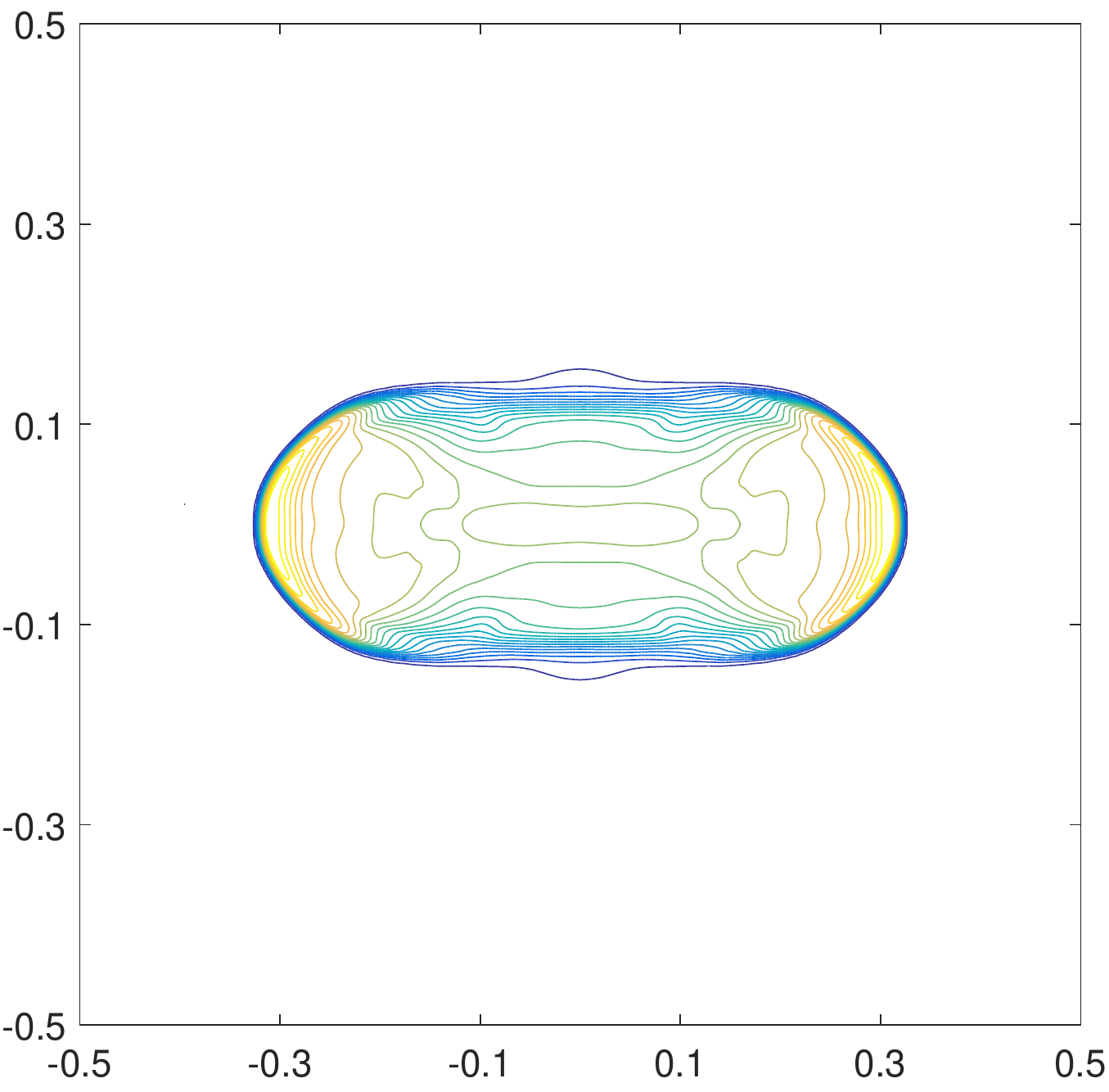}}
	{\includegraphics[width=0.49\textwidth]{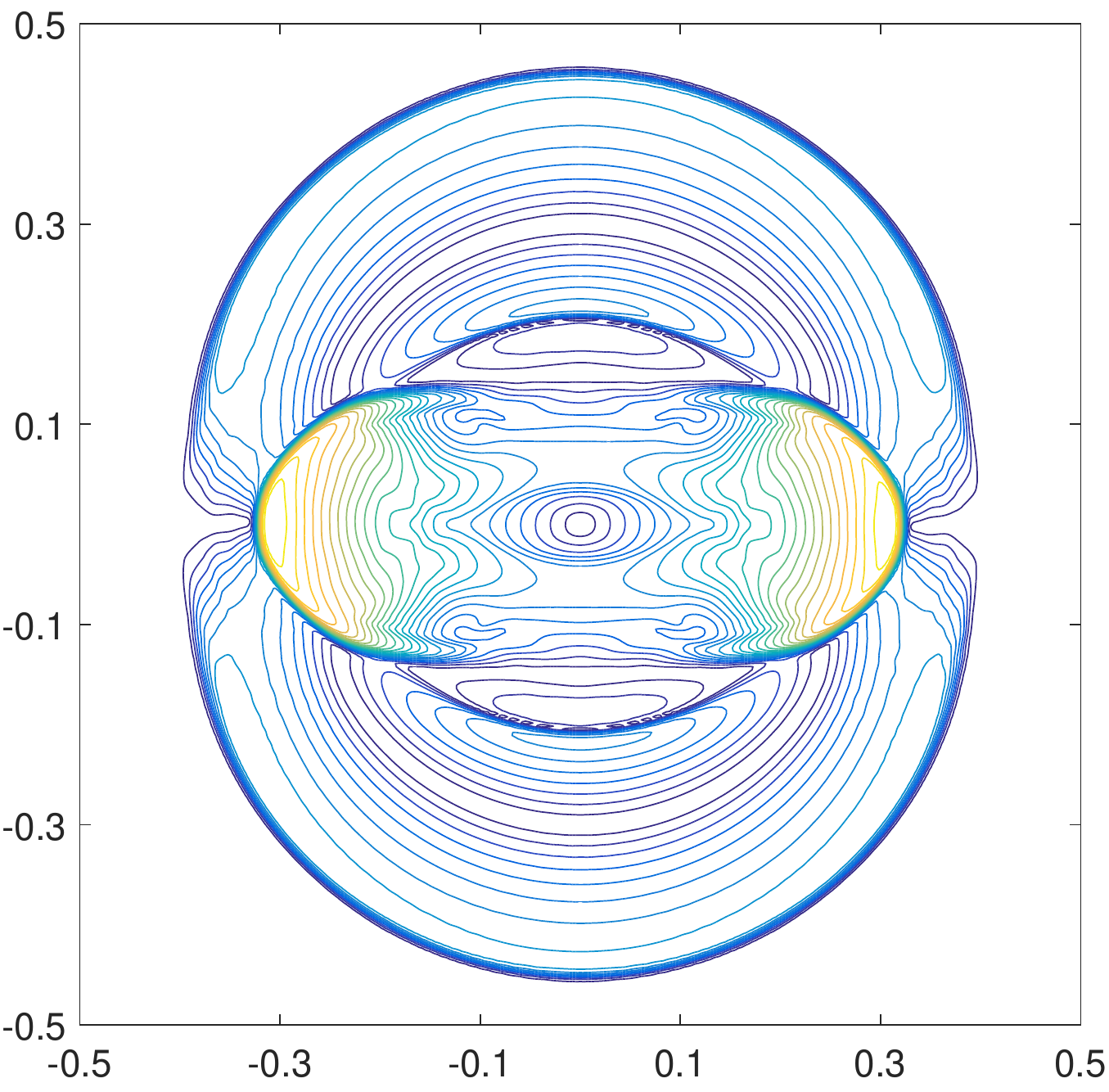}}
	{\includegraphics[width=0.49\textwidth]{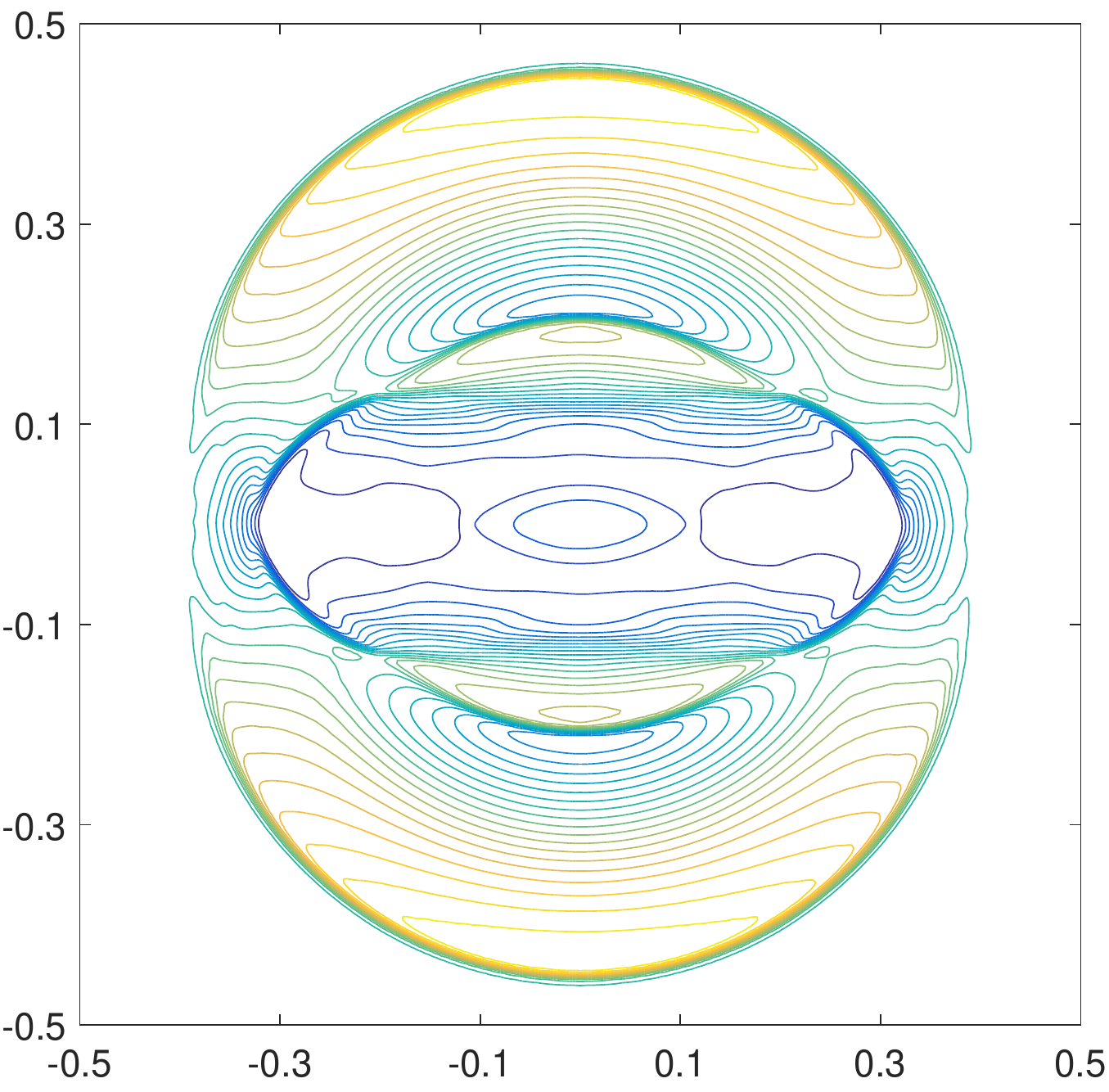}}
	\caption{\small The first blast problem in Example \ref{ex:BL}: 
		contour plots of density (top left), 
		pressure (top right), velocity $|{\bf v}|$ (bottom left) 
		and magnetic pressure (bottom right) at $t=0.01$.} 
	\label{fig:BL1}
\end{figure}

	To further demonstrate the robustness of the proposed PP DG method, we then test a more challenging case with larger initial jump in the pressure and   
	much stronger magnetic field. More specifically, we set $p_e=10^4$, $p_a=0.1$ and $B_a=1000/\sqrt{4\pi}$. 
	The corresponding plasma-beta is extremely small and about $2.51\times 10^{-6}$, 
	which is $1\%$ of that in the above standard setup.    
	  To our best knowledge, such extreme blast problem is rarely considered 
	  in the literature. Fig.\ \ref{fig:BL2} displays the 
	  numerical results at $t=0.001$ obtained by the PP third-order DG method on 
	 	the uniform mesh of $320\times320$ cells. 
	 	One can see that, as the magnetization is increased,  
	 	the external fast shock becomes much weaker and is not visible in the counter plot of density. In this extreme test, it is also necessary to use the PP limiter 
	 	to meet the condition \eqref{eq:FVDGsuff}, 
	 	otherwise the DG method will fail at 
	 	$t\approx1.2\times 10^{-5}$ due to negative numerical pressure.

	\begin{figure}[htbp]
	\centering
	{\includegraphics[width=0.49\textwidth]{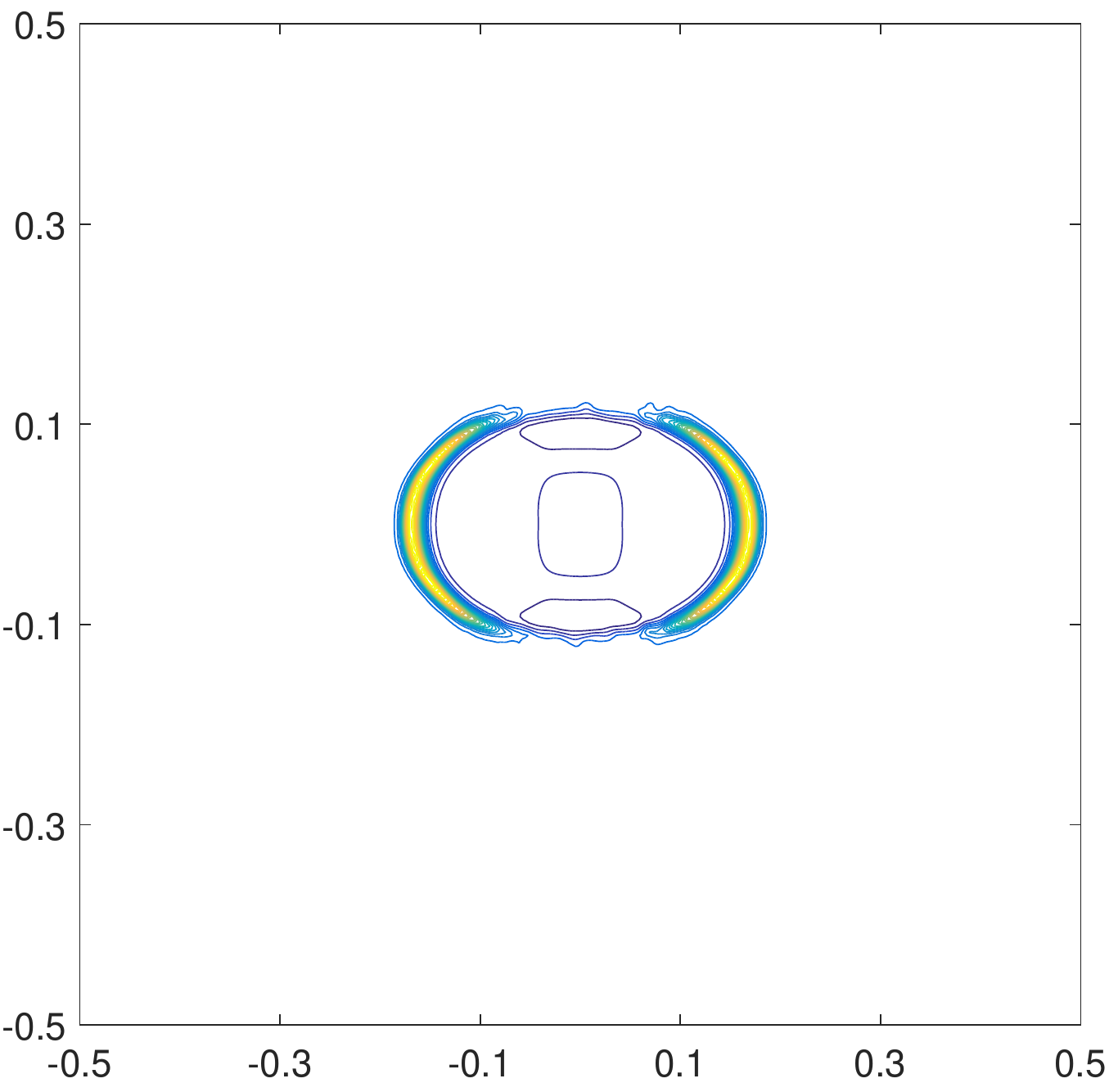}}
	{\includegraphics[width=0.49\textwidth]{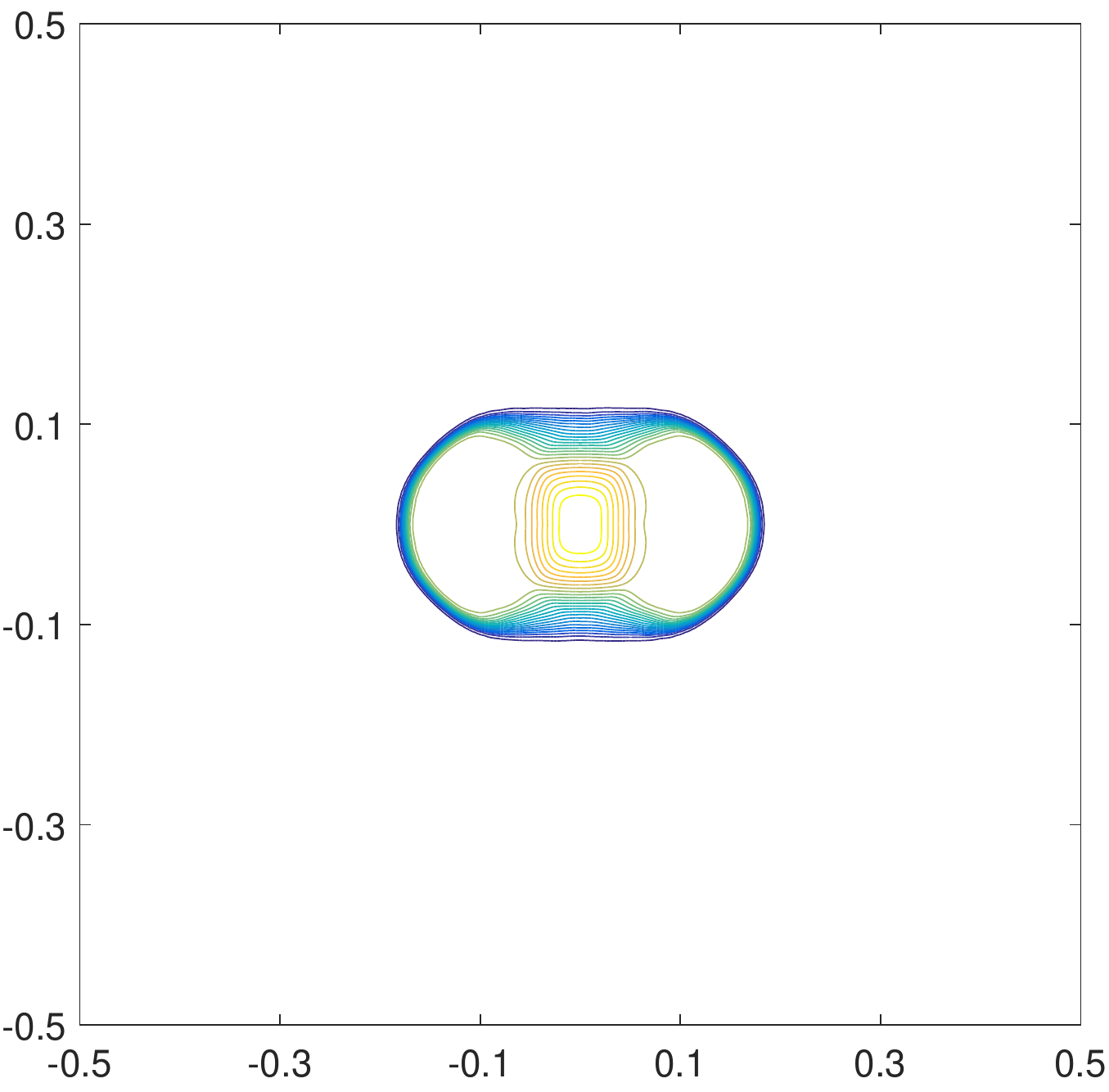}}
	{\includegraphics[width=0.49\textwidth]{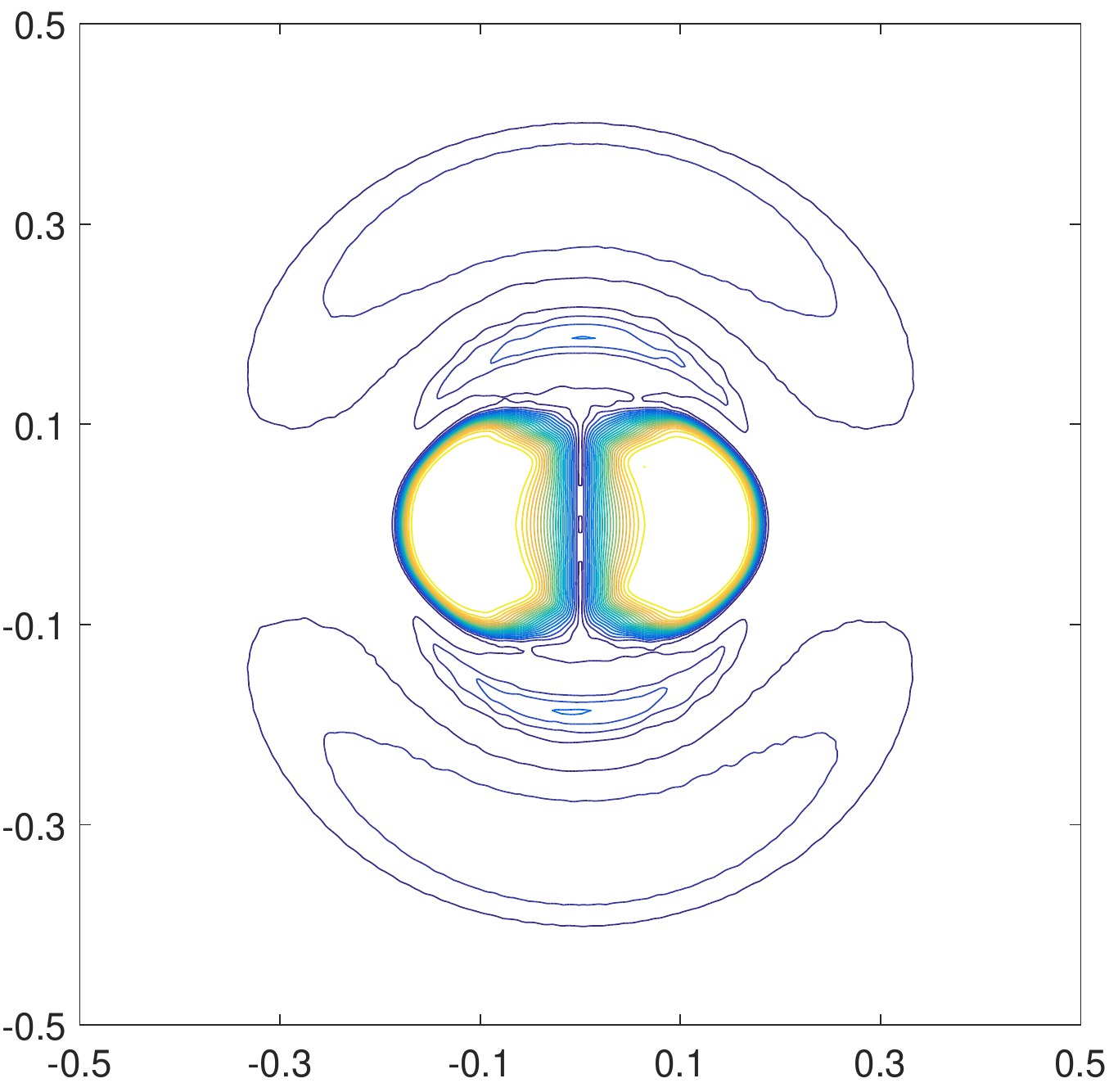}}
	{\includegraphics[width=0.49\textwidth]{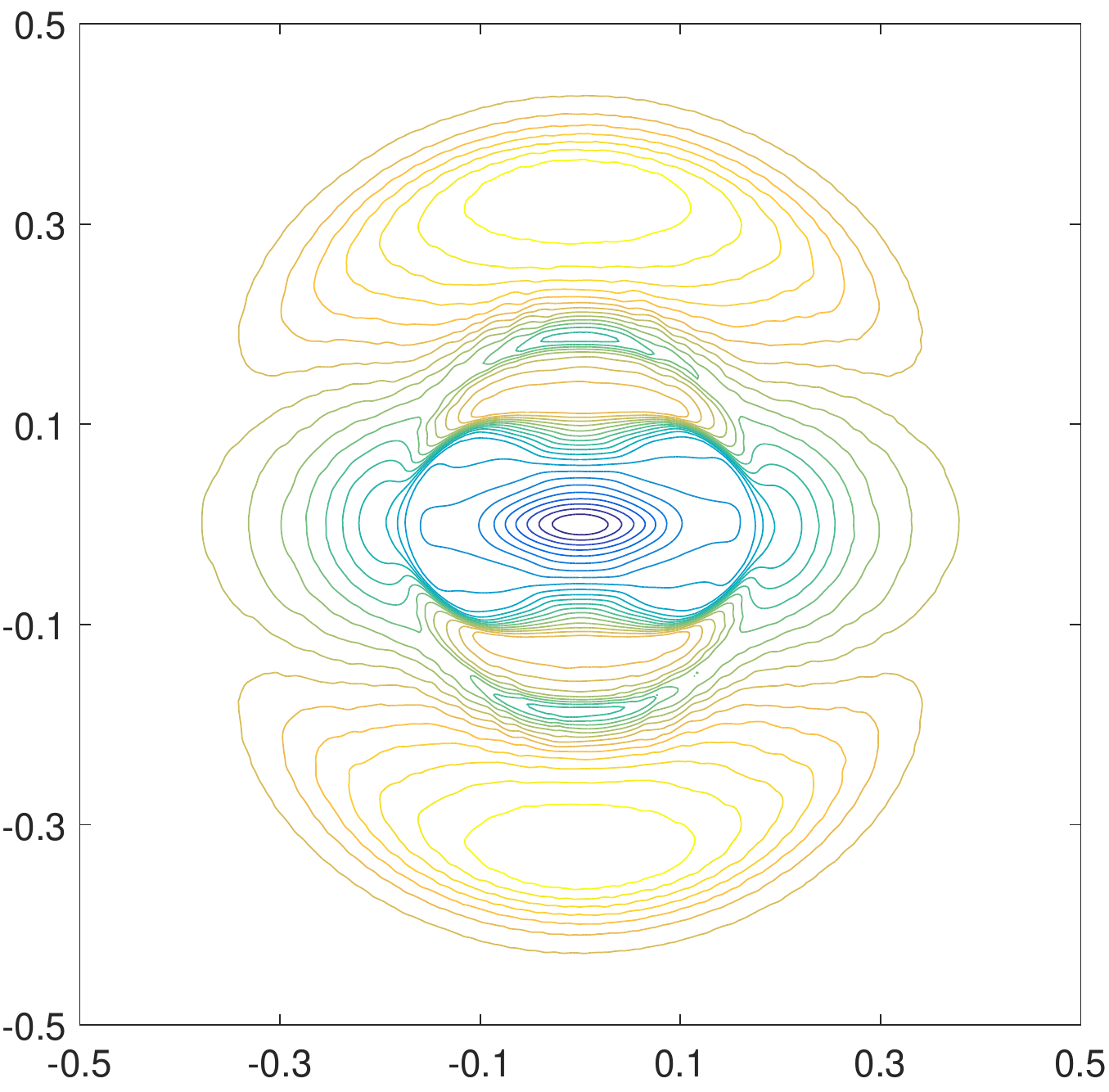}}
	\caption{\small Same as Fig.\ \ref{fig:BL1} except for 
		the second blast problem and $t=0.001$.} 
	\label{fig:BL2}
\end{figure}

To justify that the CFL condition \eqref{eq:CFL:2DMHD} 
	is acceptable, we show 
	the values of 
	${\vartheta_\ell}/{\alpha_{\ell,n}^{\tt LF}}$, $\ell=1,2,$ and 
	$\theta$ in Fig.\ \ref{fig:theta} for above two blast problems. We observe that, during the whole simulations,  
	the ratios ${\vartheta_\ell}/{\alpha_{\ell,n}^{\tt LF}}$,  $\ell=1,2,$ are very small, 
	and $\theta$ is always larger than 0.98 and very close to 1. 
	This is consistent with our analysis in Remark \ref{rem:CFLresonable}, and further confirms that $\theta$ in \eqref{eq:CFL:2DMHD}  does not cause strict restriction on the time step-sizes.

	\begin{figure}[htbp]
	\centering
	{\includegraphics[width=0.49\textwidth]{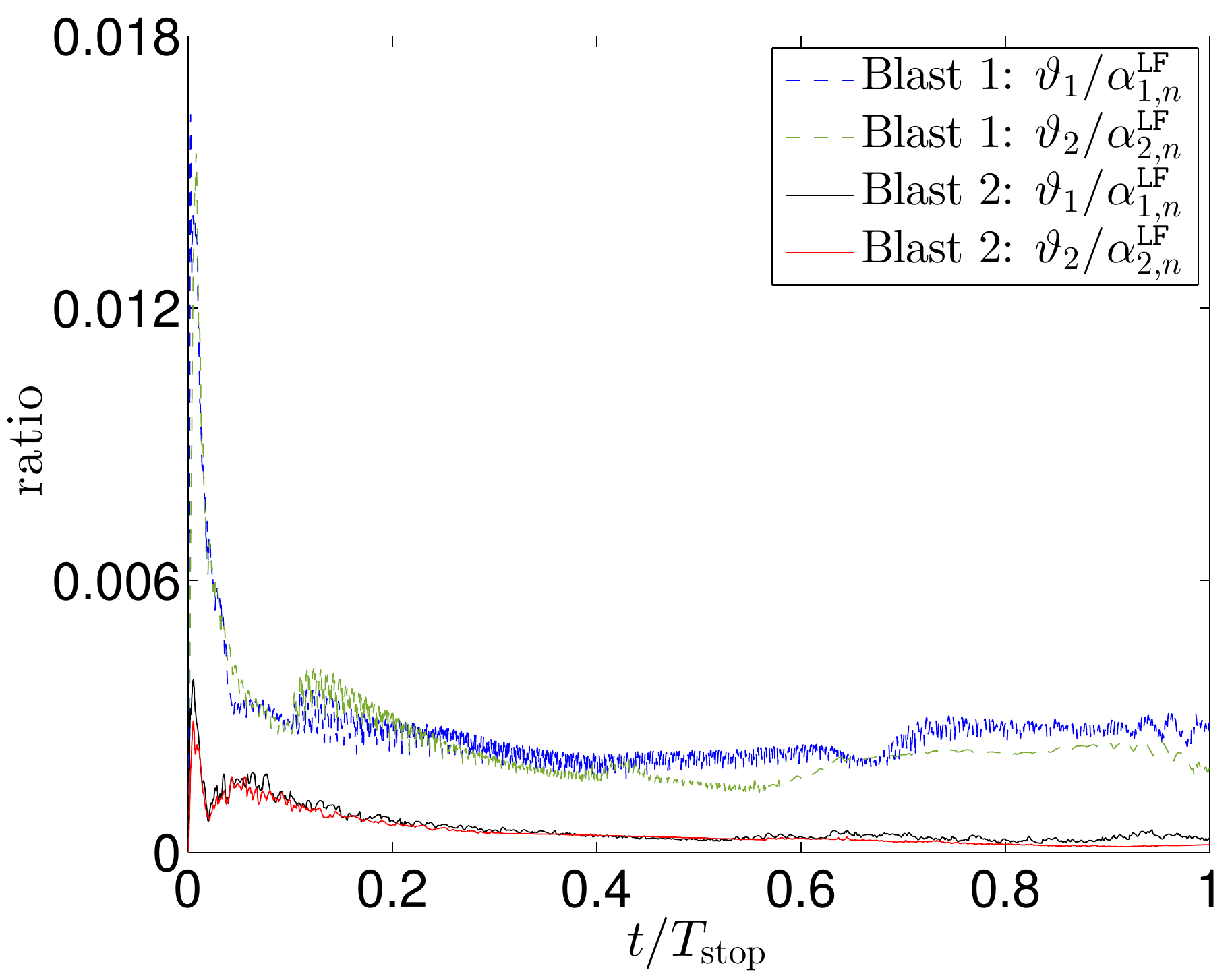}}
	{\includegraphics[width=0.49\textwidth]{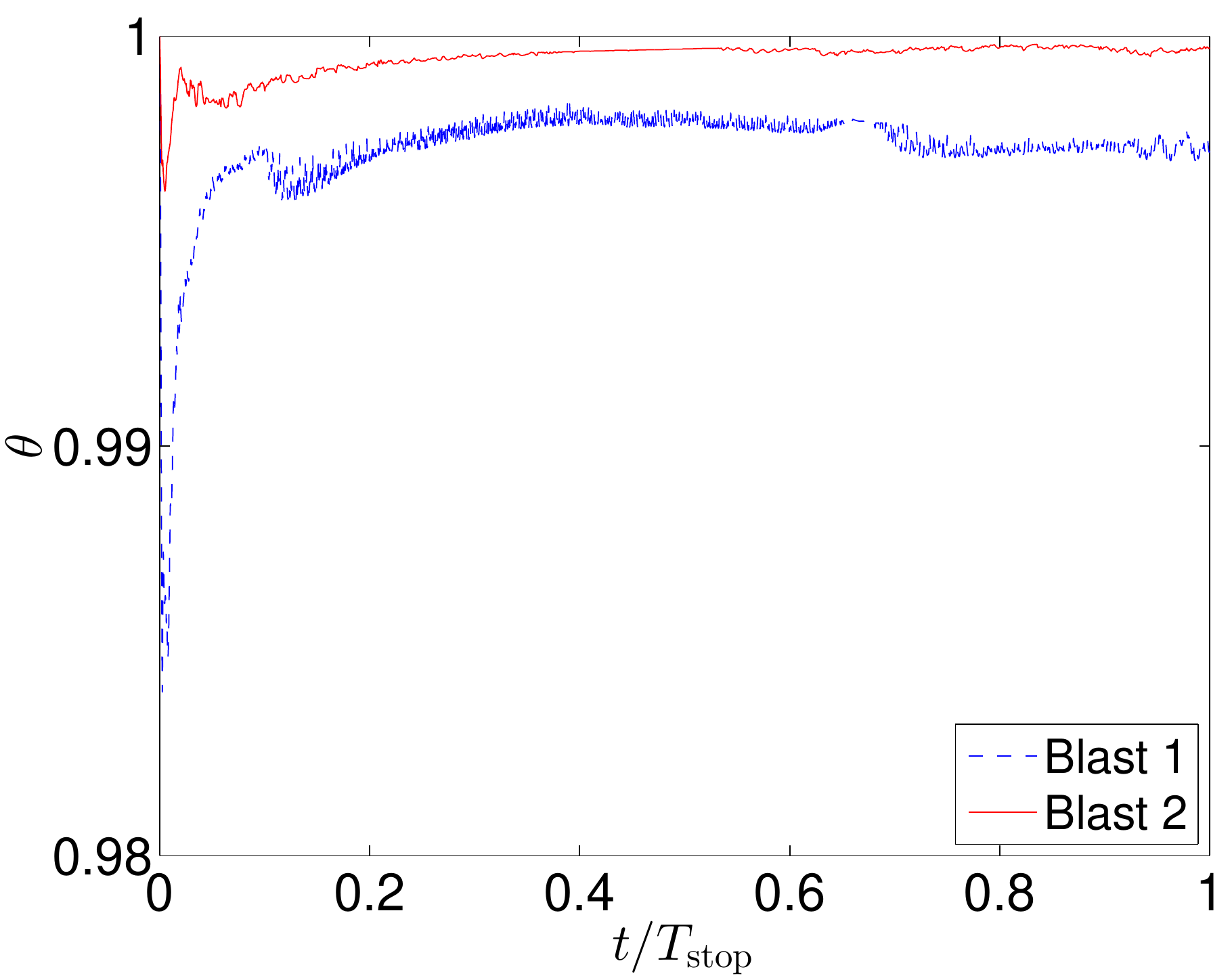}}
	\caption{\small Example \ref{ex:BL}: 
		${\vartheta_\ell}/{\alpha_{\ell,n}^{\tt LF}}$ (left)
		and $\theta$ (right) for the two blast problems.} 
	\label{fig:theta}
\end{figure}

\end{expl}

\begin{expl}[Astrophysical jets]\label{ex:jet}The last example is to simulate several astrophysical jets. 
If the jet speed is extremely high, the Mach number is very large 
and/or the magnetic field is exceedingly strong, 
then it is very challenging to successfully simulate such jet flows, see e.g., \cite{zhang2010b,Balsara2012,WuTang2015,WuTang2017ApJS}. 
Since the internal energy is very
small compared to the huge kinetic energy and/or magnetic energy, 
negative pressure could easily appear in the numerical simulation. 
Moreover, there exist strong shock wave, shear flow and interface
	instabilities etc., in high-speed jet flows.
Therefore, we have a strong motivation to use
the PP high-order DG methods for this kind of problems.

Consider the Mach 800 dense jet in \cite{Balsara2012}, and add a magnetic field to simulate the MHD jet flows. 
Initially, the physical domain $[-0.5,0.5]\times[0,1.5]$ is 
filled with a uniform static medium with density $0.1\gamma$ and unit pressure, 
and the adiabatic index $\gamma$ is set as $1.4$. 
Through the inlet part ($\left|{\tt x}\right|<0.05$) 
on the bottom boundary (${\tt y}=0$), a dense jet with speed $800$ is injected in the $\tt y$-direction with a density of $\gamma$ and a pressure 
equal to the ambient pressure.
The fixed inflow beam condition
is specified on the nozzle $\{{\tt y}=0,\left|{\tt x}\right|<0.05\}$, and the
others are outflow boundary conditions. 
We initialize the magnetic field with magnitude $B_a$ along the $\tt y$-direction.  
With the magnetic field, this test becomes more extreme. 
As $B_a$ is set larger, the initial ambient magnetization becomes higher (plasma-beta becomes lower), and this problem becomes more challenging. 
Numerical experiments in \cite{Wu2017a} indicated that the locally divergence-free, conservative, third-order DG method with the PP limiter is not able to run this test with $B_a\ge \sqrt{200}$ due to the negative numerical pressure. 
In this test, we take the computational domain as  
$[0,0.5]\times[0,1.5]$ with the reflecting boundary
condition specified at ${\tt x}=0$, and divide it into $200 \times 600$ cells. Three configurations are considered:
\\
(i) Moderately magnetized case: $B_a= \sqrt{200}$, corresponding plasma-beta $\beta_a=10^{-2}$.
\\
(ii) Strongly magnetized case: $B_a= \sqrt{2000}$, corresponding plasma-beta $\beta_a=10^{-3}$.
\\
(iii) Extremely strongly magnetized case: $B_a= \sqrt{20000}$, plasma-beta $\beta_a=10^{-4}$.
Figs.\ \ref{fig:jetd} and \ref{fig:jetp} 
display, respectively, the schlieren
images of density logarithm and pressure
logarithm 
within the domain $[-0.5,0.5]\times[0,1.5]$. 
The ``colormap'' for plots of pressure
logarithm is carefully chosen close to that in \cite{Balsara2012} for a sake of comparison, while for density logarithm we simply use the ``jet colormap'' predefined in MATLAB. 
As one can see, the flow structures in different magnetized cases  
are very different. 
The Mach shock wave at the jet head and
the beam/cocoon interface are well captured, 
and the proposed PP DG method exhibits good
performance and robustness in such extreme tests. 
And if the PP limiter is turned off, the simulation will break down after 
several time steps due to nonphysical numerical solutions.

	\begin{figure}[htbp]
	\centering
	{\includegraphics[width=0.98\textwidth]{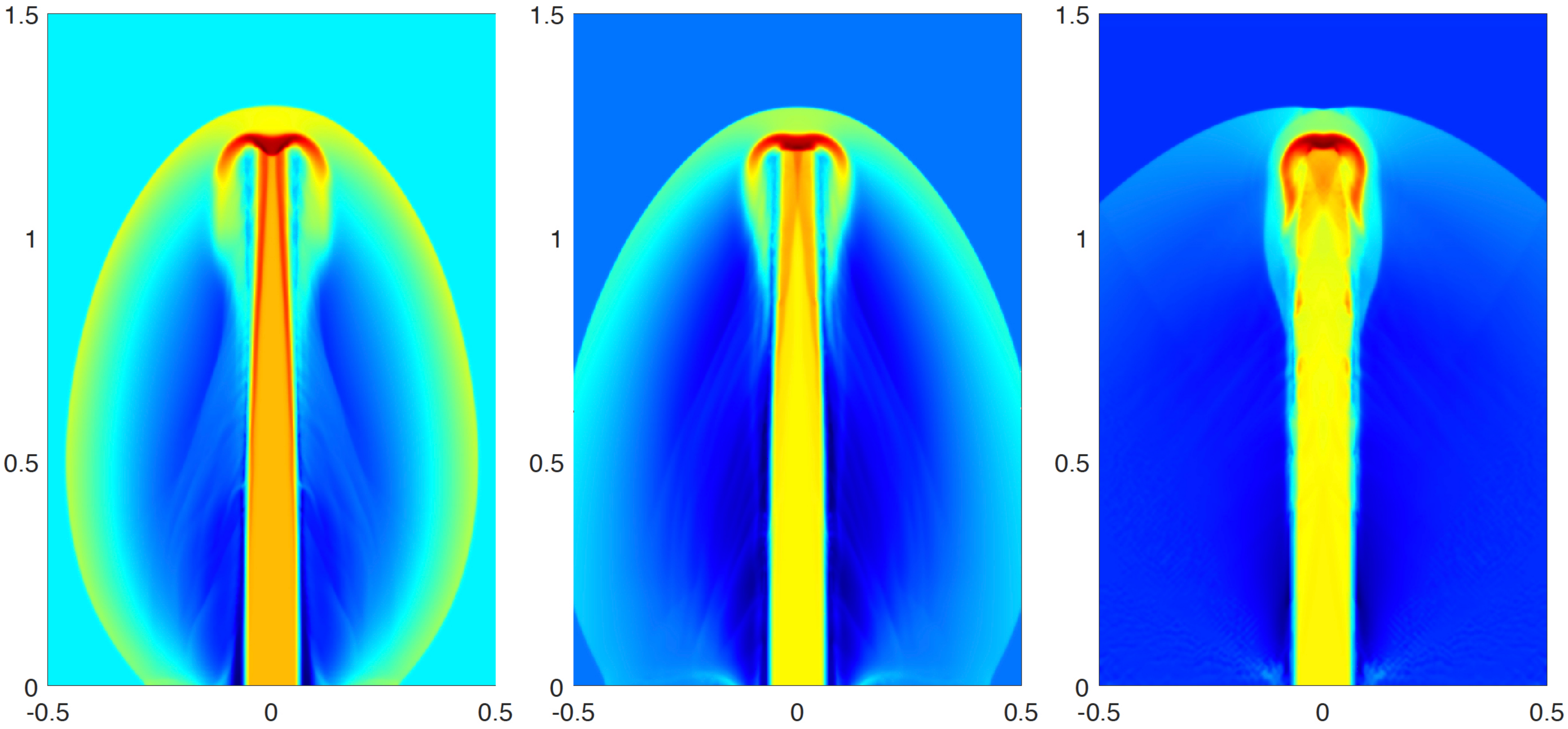}}
	\caption{\small Example \ref{ex:jet}: Schlieren images of the density logarithm at $t=0.002$. From left to right: configurations (i) to (iii).} 
	\label{fig:jetd}
\end{figure}

	\begin{figure}[htbp]
	\centering
	{\includegraphics[width=0.98\textwidth]{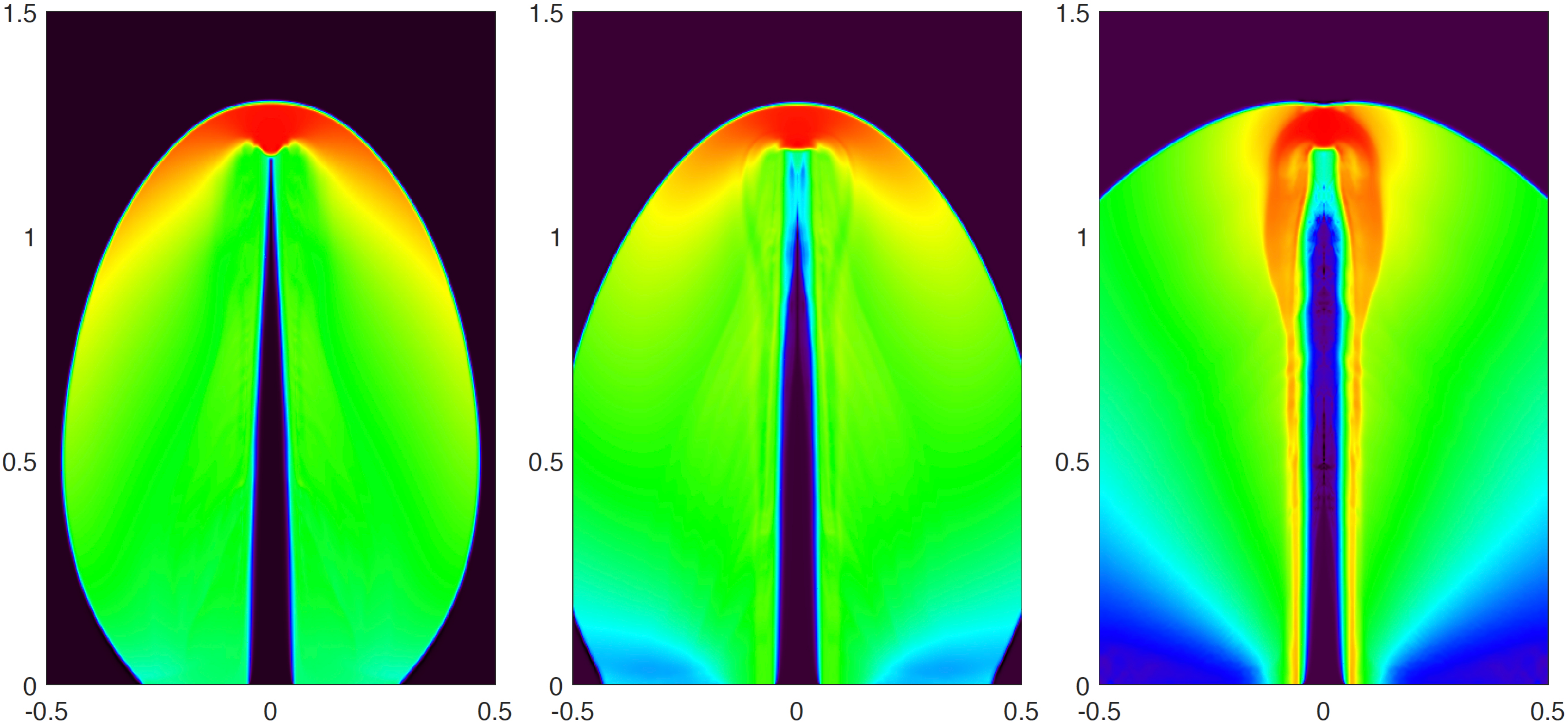}}
	\caption{\small Same as Fig.\ \ref{fig:jetd} except for the schlieren images of pressure logarithm.} 
	\label{fig:jetp}
\end{figure}

We now give more numerical evidences to support our analysis in Remark \ref{rem:CFLresonable} about the CFL condition \eqref{eq:CFL:2DMHD}. The values of 
${\vartheta_\ell}/{\alpha_{\ell,n}^{\tt LF}}$ and 
$\theta$ are shown in Fig.\ \ref{fig:theta2}  for  
the challenging configuration (iii), while the results of configurations (i)--(ii) are similar and omitted. 
We see that, during the whole simulation,  
${\vartheta_\ell}$ is much smaller than 
${\alpha_{\ell,n}^{\tt LF}}$, $\ell=1,2,$ 
and $\theta$ is always very close to 1. This demonstrates, again, that $\theta$ in \eqref{eq:CFL:2DMHD} does not cause strict restriction on the time step-sizes.

\begin{figure}[htbp]
	\centering
	{\includegraphics[width=0.49\textwidth]{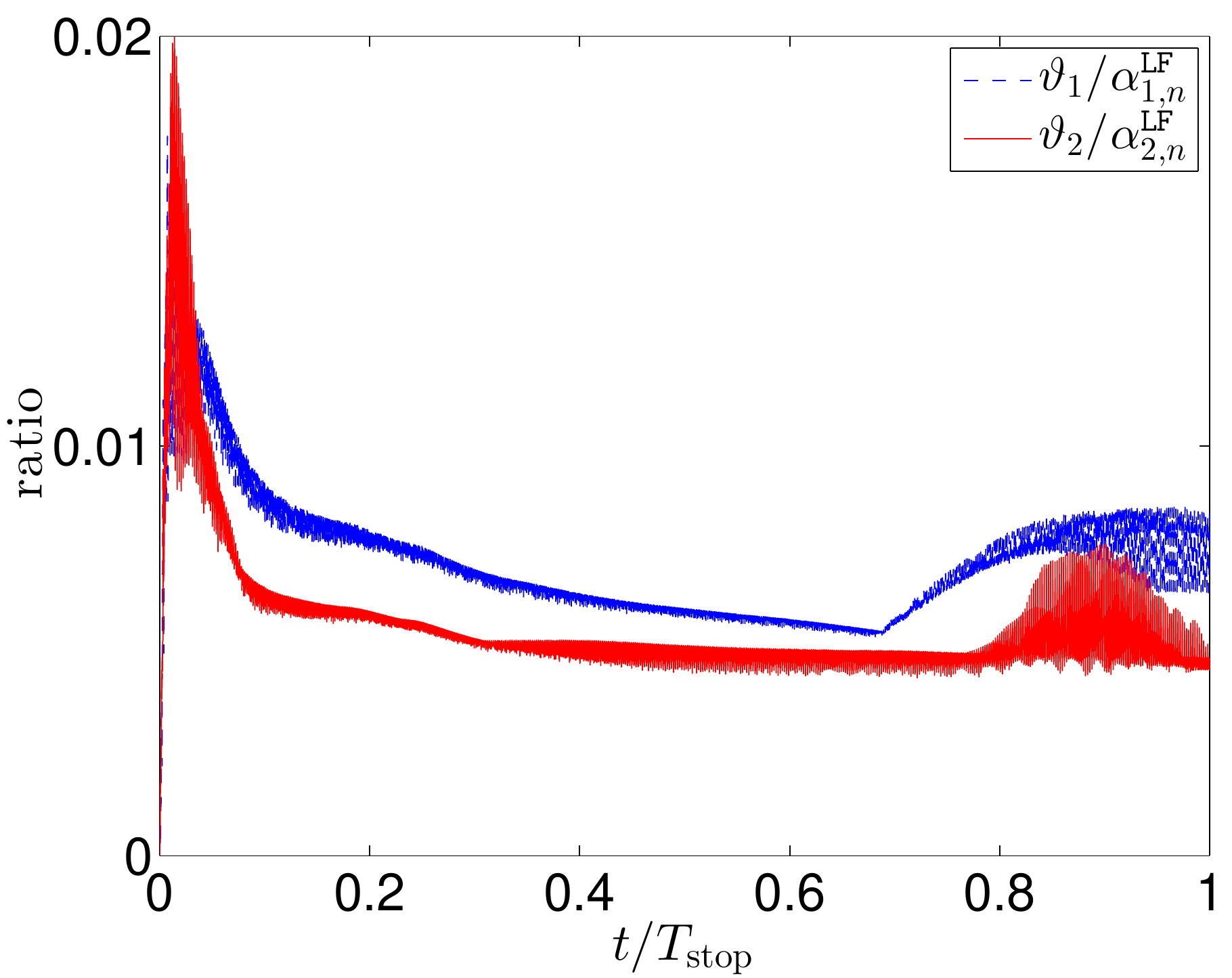}}
	{\includegraphics[width=0.49\textwidth]{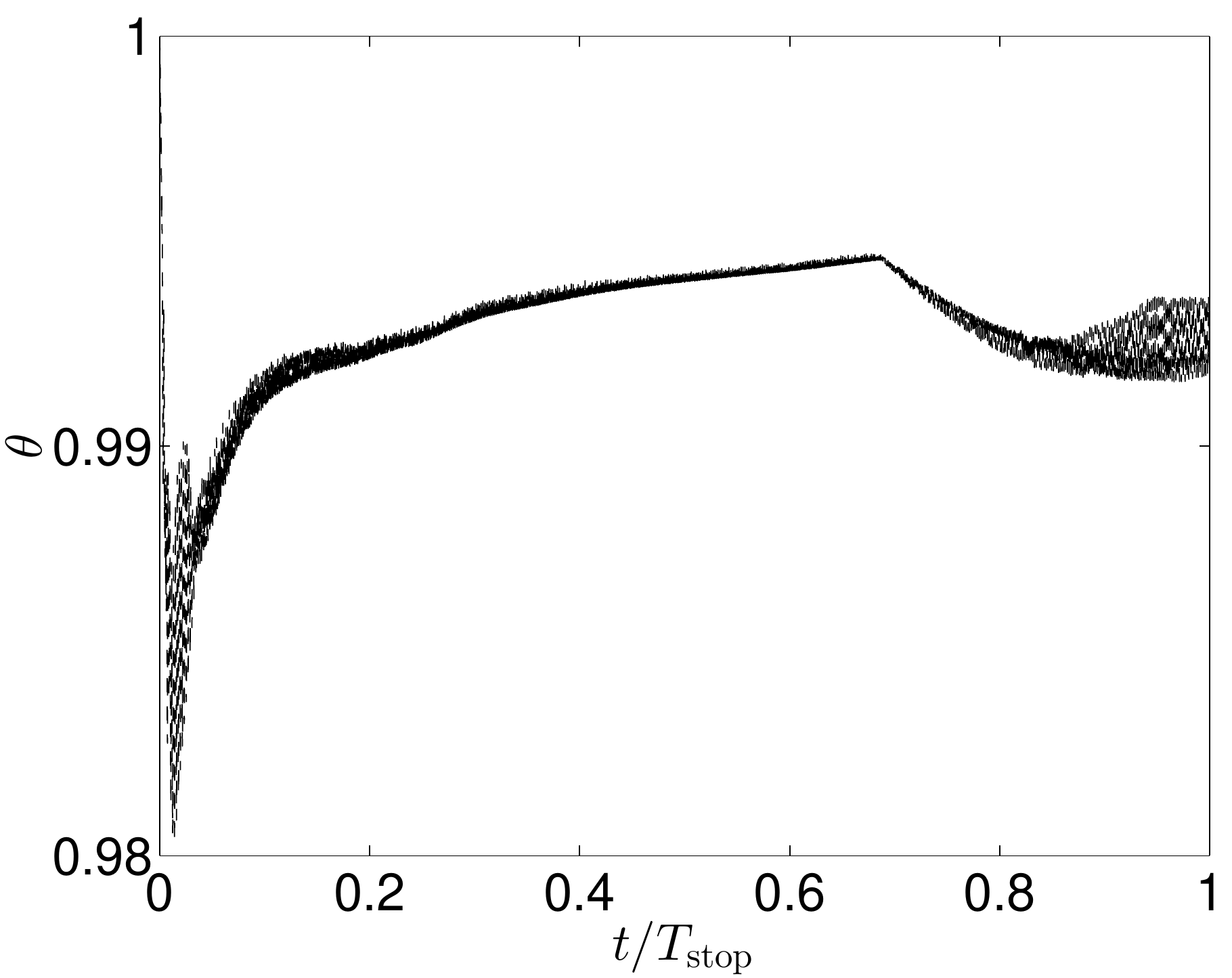}}
	\caption{\small  Example \ref{ex:BL}: 
		${\vartheta_\ell}/{\alpha_{\ell,n}^{\tt LF}}$ (left)
		and $\theta$ (right) for the jet problem in extremely strongly magnetized case.} 
	\label{fig:theta2}
\end{figure}

\end{expl}

\section{Conclusions}\label{sec:con}
We have constructed arbitrarily high-order
accurate positivity-preserving (PP) discontinuous Galerkin (DG) schemes for multidimensional ideal compressible magnetohydrodynamics (MHD). 
It is based on the proposed locally divergence-free high-order DG schemes 
for the symmetrizable ideal MHD equations as the base schemes, 
the PP limiting procedure \cite{cheng} to enforce the positivity of the
 DG solutions, and 
  strong stability preserving methods \cite{Gottlieb2009} for 
 time discretization. 
The significant innovation is that we discover and rigorously prove the 
PP property of the proposed DG schemes by using a novel equivalent form of the admissible state set and some very technical estimates. 
There are two features 
in our PP schemes: the locally divergence-free spatial discretization and the penalty-type terms discretized from the GP source term. 
The former leads to zero divergence within each cell, while the latter 
controls the divergence-error across the cell interfaces.  
Our PP analysis have showed that, thanks to these two features, the PP schemes are obtained without requiring the discrete divergence-free condition in \cite{Wu2017a}, 
which was proposed for the conservative schemes without penalty-type terms. 
Several two-dimensional numerical experiments have confirmed 
the theoretical analysis, and demonstrated the accuracy, effectiveness and robustness 
of the proposed PP DG method.

The motivation of designing PP schemes based on the 
symmetrizable ideal MHD equations comes from an important observation: 
in the multidimensional cases, 
the exact solution of the conservative MHD equations \eqref{eq:MHD} 
may fail to preserve the non-negativity of pressure if $\nabla \cdot {\bf B} \neq 0$, 
while it seems that the symmetrizable form \eqref{eq:MHD:GP} 
with an additional non-conservative source term does not suffer from this issue.  
There is still a conflict between
the requirement of the non-conservative source term, and 
the conservation property of numerical schemes which is lost due to
the source term. 
The extension of our PP methods to unstructured meshes 
is straightforward, but the proof of the PP property is 
much more technical and will be studied separately.


\appendix

\section{Negative pressure may appear in the exact solution of conservative MHD system \eqref{eq:MHD} if $\nabla \cdot {\bf B}\neq 0$}\label{app:evidences} This appendix provides the evidence (not rigorous proof) on the claim that negative pressure may appear in the exact smooth solution of the conservative MHD system \eqref{eq:MHD} if $\nabla \cdot {\bf B}\neq 0$. 

Let focus on the ideal EOS \eqref{eq:EOS} and $d=3$. We consider the following initial condition with nonzero divergence
\begin{equation}\label{eq:np:data}
\begin{aligned}
&	\rho({\bf x},0)= 1, \qquad p({\bf x},0)=1-\exp(-|{\bf x}|^2),
\\
& {\bf v} ({\bf x},0)= (1,~1,~1), \qquad {\bf B}({\bf x},0) 
= (1+\delta B_1,~1+\delta B_2,~1+\delta B_3), 
\end{aligned}
\end{equation}
where ${\bf x}=(x_1,x_2,x_3)$ and $\delta B_i=\frac{\epsilon}{3} \arctan x_i$, $i=1,2,3$, are small perturbations 
with $0<\epsilon \ll 1$. 
Since the initial data \eqref{eq:np:data} is bounded and infinitely differentiable, it is reasonable to expect that,    
there exists a small time interval $[0,t_*)$ such that 
the exact solution of the system \eqref{eq:MHD} with the initial condition \eqref{eq:np:data} is smooth for $t \in [0,t_*)$. 
Under this assumption, one can study the initial time derivative of $p$ at ${\bf x}={\bf 0}$, although the analytical expression of the exact solution for $t>0$ is not available. For smooth solutions, it follows from \eqref{eq:MHD} that 
$$
p_t + {\bf v} \cdot \nabla p + \gamma p \nabla \cdot {\bf v} 
+ (\gamma-1) ( {\bf v} \cdot {\bf B} ) \nabla \cdot {\bf B} =0.
$$
At $t=0$ and ${\bf x}={\bf 0}$, one has $\nabla p={\bf 0}$, $\nabla \cdot {\bf v}=0$ and $\nabla \cdot {\bf B}= \epsilon > 0$, which yield  
$$
p_t({\bf 0},0) = - 3(\gamma-1)  \epsilon < 0.
$$ 
Note that $p({\bf 0},0)=0$. 
Thus there exists $t_0 \in [0,t_*)$ such that 
$$
p({\bf 0},t) < 0, \quad \forall t \in (0,t_0).
$$ 
We therefore have the reason to think that the non-negativity of pressure is not positively invariant 
for the conservative MHD system \eqref{eq:MHD} if $\nabla \cdot {\bf B}\neq 0$. While the modified MHD system \eqref{eq:MHD:GP} may not 
suffer from this issue, because \eqref{eq:MHD:GP} implies   
$$
p_t + {\bf v} \cdot \nabla p + \gamma p \nabla \cdot {\bf v} =0.
$$

\section{Two additional benchmark  tests}\label{app:test}The Orszag-Tang  problem (see e.g., \cite{Li2005}) and rotor problem \cite{BalsaraSpicer1999} are two benchmark tests widely simulated in the literature. Although not extreme, they are also tested by using  
the proposed PP third-order DG method to verify the effectiveness and high-resolution. Fig.\ \ref{fig:standardtests} gives the contour plots of the computed density, which agree well with those in \cite{BalsaraSpicer1999,Li2005}.

	\begin{figure}[htbp]
	\centering
	{\includegraphics[width=0.98\textwidth]{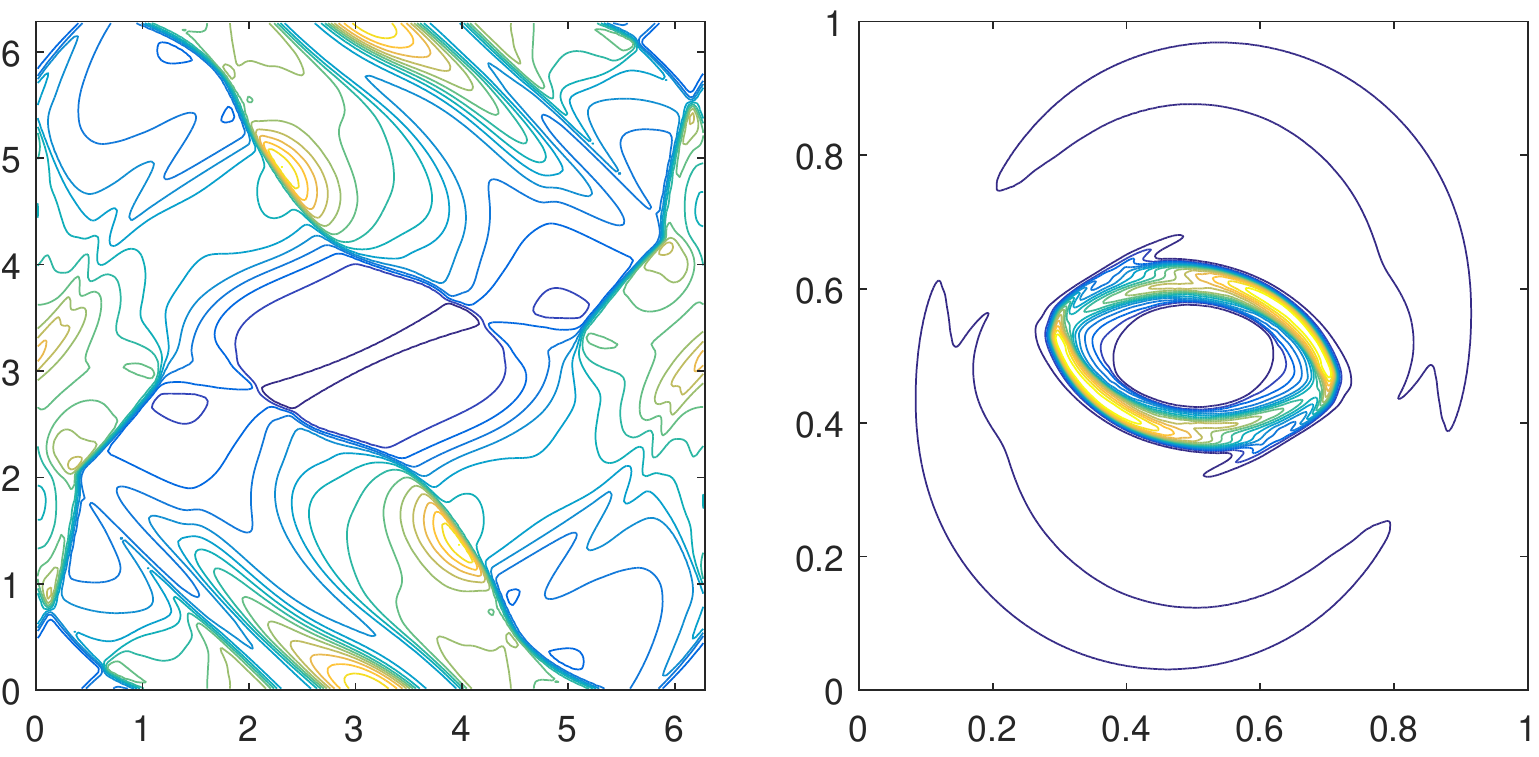}}
	\caption{\small Numerical solutions of the
		Orszag-Tang problem with $200\times 200$ cells at $t=2$ (left) and the rotor problem with $400\times 400$ cells at $t=0.295$ (right).} 
	\label{fig:standardtests}
\end{figure}

\bibliographystyle{siamplain}
\bibliography{references}

\end{document}